 \newif\ifHideFoot
\newcommand{\Yano}[1]{}
\newcommand{\Lisa}[1]{}
\newcommand{\Zheng}[1]{}
\newcommand{\marg}[1]{\normalsize{{
   \color{red}\footnote{{\color{blue}#1}}}{\marginpar[\vskip
   -.25cm{\color{red}\hfill\thefootnote$\implies$}]{\vskip
    -.2cm{\color{red}$\impliedby$\tiny\thefootnote}}}}}
\newcommand{\Yano}[1]{\marg{(Yano) #1}}
\newcommand{\Lisa}[1]{\marg{(Lisa) #1}}
\newcommand{\Zheng}[1]{\marg{(Zheng) #1}}
\numberwithin{equation}{section}
\theoremstyle{plain}
\newtheorem{theorem}[equation]{Theorem}
\newtheorem{lemma}[equation]{Lemma}
\newtheorem{proposition}[equation]{Proposition}
\theoremstyle{remark}
\newtheorem{remark}[equation]{Remark}
\theoremstyle{definition}
\newtheorem{definition}[equation]{Definition}
\newcommand{\bP}{\mathbb{P}}
\newcommand{\bQ}{\mathbb{Q}}
\newcommand{\bZ}{\mathbb{Z}}
\newcommand{\bC}{\mathbb{C}}
\newcommand{\bL}{\mathbb{L}}
\newcommand{\calA}{\mathcal{A}}
\newcommand{\calH}{\mathcal{H}}
\newcommand{\calR}{\mathcal{R}}
\newcommand{\calM}{\mathcal{M}}
\newcommand{\calO}{\mathcal{O}}
\newcommand{\calL}{\mathcal{L}}
\newcommand{\calP}{\mathcal{P}}
\newcommand{\Aut}{\mathrm{Aut}}
\newcommand{\Sym}{\mathrm{Sym}}
\newcommand{\Hom}{\mathrm{Hom}}
\newcommand{\Spec}{\mathrm{Spec}}
\newcommand{\Proj}{\mathrm{Proj}}
\newcommand{\im}{\mathrm{Im}}
\newcommand{\Pic}{\mathrm{Pic}}
\newcommand{\Lin}{\mathrm{Lin}}
\newcommand{\git}{/\kern-0.2em/}
\title[]{The moduli space of cubic threefolds with a non-Eckardt type involution via intermediate Jacobians} 
\author[]{Sebastian Casalaina-Martin}
\address{Department of Mathematics, University of Colorado, Boulder, CO 80309, USA}
\email{casa@math.colorado.edu}
\author[]{Lisa Marquand}
\address{Department of Mathematics, Stony Brook University,  Stony Brook, NY 11794, USA}
\email{lisa.marquand@stonybrook.edu}
\author[]{Zheng Zhang}
\address{Institute of Mathematical Sciences, ShanghaiTech University, Shanghai 201210, China}
\email{zhangzheng@shanghaitech.edu.cn}
\thanks{Research of the first named author is supported in part by a grant from the Simons Foundation (581058). Research of the second named author is supported in part by NSF grant DMS-2101640 (PI Laza). Research of the third named author is supported in part by NSFC grant 12201406.}
\date{\today}
\begin{document}
\bibliographystyle{halpha}
\maketitle

\begin{abstract}
	There are two types of involutions on a cubic threefold: the Eckardt type (which has been studied by the first named and the third named authors) and the non-Eckardt type. Here we study cubic threefolds with a non-Eckardt type involution, whose fixed locus consists of a line and a cubic curve. Specifically, we consider the period map sending a cubic threefold with a non-Eckardt type involution to the invariant part of the intermediate Jacobian. The main result is that the global Torelli Theorem holds for the period map. To prove the theorem, we project the cubic threefold from the pointwise fixed line and exhibit the invariant part of the intermediate Jacobian as a Prym variety of a (pseudo-)double cover of stable curves. The proof relies on a result of Ikeda and Naranjo-Ortega on the injectivity of the related Prym map. We also describe the invariant part of the intermediate Jacobian via the projection from a general invariant line and show that the two descriptions are related by the bigonal construction. 
\end{abstract}

\section*{Introduction} 
	
	Moduli spaces of cubic hypersurfaces are a central object of moduli theory, as they are one of the first examples one can study via a Hodge theoretic period map. Clemens, Griffiths \cite{MR302652}, Mumford \cite{MR0379510} and Beauville \cite{MR672617} proved the global Torelli Theorem for cubic threefolds -- namely, a cubic threefold is determined up to isomorphism by its intermediate Jacobian. Based on the work of Voisin, Hassett, Laza and Looijenga on the period map for cubic fourfolds, Allcock, Carlson and Toledo \cite{MR2789835} and Looijenga and Swierstra \cite{MR2339838} have exhibited the moduli space of cubic threefolds as a ball quotient. More specifically, this is achieved via the (eigen)period map for cubic fourfolds admitting an automorphism of order $3$, which are obtained as triple covers of $\bP^4$ branched along a cubic threefold. Furthermore, Kudla and Rapoport \cite{MR3001805} (see also \cite{MR4298656}) have interpreted the above construction as a certain map of stacks taking values in a moduli stack of abelian varieties of Picard type; in this way they are also able to describe the field of definition of the period map. It is worth noting that cubic hypersurfaces with additional automorphisms are related to other interesting moduli problems, and have been key ingredients for constructing several new period maps. Besides the moduli of cubic threefolds \cite{MR2789835, MR2339838}, examples include moduli of cubic surfaces \cite{MR1910264} (via cubic threefolds with an order $3$ automorphism), moduli of  cubic threefold pairs \cite{MR3886178} and cubic surface pairs \cite{MR4313238} (via cubic fourfolds and cubic threefolds admitting an Eckardt type involution, respectively). In a different direction, cubic threefolds with extra symmetry provide examples of unlikely intersections in the intermediate Jacobian locus \cite{MR4313238}. 
	
	Involutions on cubic fourfolds have been recently studied in \cite{MR3886178}, \cite{MR4190414} and \cite{marquandcubic} (see also \cite{MR4363785}). In this paper, we focus on cubic threefolds admitting a (biregular) involution -- these have been classified in for instance \cite{MR2820585}. In particular, there are two types of involutions for a cubic threefold; admitting one type is equivalent to having an Eckardt point. The moduli space of cubic threefolds admitting an Eckardt type involution has been studied in \cite{MR4313238}; the main result is that the period map sending an Eckardt cubic threefold to the anti-invariant part of the intermediate Jacobian is injective. The purpose of this paper is to study the analogous situation for the remaining involution.
 
 	More concretely, we study the moduli space $\calM$ of cubic threefolds $X\subset \bP^4$ with an involution $\tau$ of non-Eckardt type, whose fixed locus in $X$ consists of a line $L$ and a cubic curve $C$. We define $JX^\tau$ to be the invariant part of the intermediate Jacobian $JX$ with respect to the induced involution $\tau.$ The abelian subvariety $JX^\tau\subset JX$ is of dimension $3$ and inherits a polarization of type $(1,2,2)$, and thus we obtain a period map:
	\[\calP:\calM \longrightarrow \calA_3^{(1,2,2)}\]
	\[(X,\tau) \mapsto JX^\tau.\]

	Our main result is the following global Torelli theorem for $\calP$.
\begin{theorem}[Global Torelli for cubic threefolds with a non-Eckardt type involution; Theorem \ref{global torelli}]
\label{main}
	The period map $\calP:\calM \rightarrow \calA_3^{(1,2,2)}$, which sends a cubic threefold $X$ with a non-Eckardt type involution $\tau$ to the invariant part $JX^\tau\subset JX$, is injective. 
\end{theorem}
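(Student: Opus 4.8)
The plan is to run the classical conic-bundle analysis of Clemens--Griffiths, Mumford and Beauville $\tau$-equivariantly: to exhibit $JX^\tau$ as the Prym variety of a (pseudo-)double cover of stable curves for which the associated Prym map is known to be injective, and then to reconstruct the pair $(X,\tau)$ from that curve datum. First I would project from the pointwise fixed line. Writing $\bP^4=\bP(V_+\oplus V_-)$ with $\tau=\mathrm{id}$ on $V_+$ (dimension $3$) and $\tau=-\mathrm{id}$ on $V_-$ (dimension $2$), the fixed line is $L=\bP(V_-)$, the fixed cubic curve is $C=X\cap\bP(V_+)$, and $\tau$-invariance forces the defining equation to have the shape $F_3(x)+a_1(x)y_0^2+a_2(x)y_0y_1+a_3(x)y_1^2$, with $F_3$ a cubic and $a_i$ linear forms in $x=(x_0,x_1,x_2)$. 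Projection from $L$ realizes the blow-up $\widetilde X=\mathrm{Bl}_L X$ as a conic bundle $q\colon\widetilde X\to\bP^2=\bP(V_+)$; a direct computation shows that the discriminant quintic is the reducible curve $\Delta=C\cup Q_0$, where $Q_0=\{a_1a_3-\tfrac14 a_2^2=0\}$ is a conic and $C$ is precisely the fixed cubic curve, and that $JX\cong\mathrm{Prym}(\widetilde\Delta/\Delta)$ as principally polarized abelian varieties, $\widetilde\Delta\to\Delta$ being the associated (admissible) double cover.

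Because $\tau$ acts trivially on the base $\bP^2=\bP(V/V_-)$, the induced involution on $\widetilde X$ is fiberwise: it fixes $\Delta$ and acts on $\widetilde\Delta$ over $\mathrm{id}_\Delta$, commuting with the deck involution $\sigma$, and one checks that $\tau=\mathrm{id}$ over the cubic component $C$ while $\tau=\sigma$ over $Q_0$. Next I would decompose $\mathrm{Prym}(\widetilde\Delta/\Delta)$ under the resulting $(\bZ/2)^2$-action, identifying $JX^\tau$ --- up to polarized isogeny, and then up to polarized isomorphism --- with the Prym variety of the (pseudo-)double cover of stable curves carried by the cubic component $C$, equipped with the $6$ points $C\cap Q_0$ and the induced square-root datum; this is precisely the type of cover for which Ikeda and Naranjo-Ortega prove the Prym map is injective, while the complementary summand $JX^{-\tau}$ is the (principally polarized) Prym coming from $Q_0$. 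The work here is to (a) pin down the numerical invariants and check that the induced polarization on $JX^\tau$ is of type $(1,2,2)$, consistently with the a priori definition of $\calP$, and (b) handle the degenerations occurring over a closed subset of $\calM$ (nodal $C$, reducible $Q_0$, non-transverse $C\cap Q_0$), working throughout in the category of (pseudo-)double covers of stable curves so that the identification is valid over all of $\calM$. Running the same analysis via projection from a \emph{general} $\tau$-invariant line gives a second double cover of curves, now over a base with a nontrivial induced involution; the bigonal construction should identify its Prym with the same $JX^\tau$, which is what pins the construction to the precise form of the Prym map whose injectivity is available.

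Granting this, the cited injectivity implies that $JX^\tau$ determines the (pseudo-)double cover of stable curves, hence the pair $(\Delta,\widetilde\Delta)$ with its $\tau$-structure, up to isomorphism. From the plane quintic $\Delta=C\cup Q_0\subset\bP^2$ together with its double cover one reconstructs the conic bundle $q$, hence $X$ and the line $L$ in the usual way, and the fiberwise involution is recovered from the $(\bZ/2)^2$-datum; this returns $(X,\tau)$ up to isomorphism and proves injectivity of $\calP$.

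The hard part will be the middle step: establishing the precise isomorphism of \emph{polarized} abelian varieties between $JX^\tau$ and the Prym of a (pseudo-)double cover having exactly the structure required by the injectivity theorem of Ikeda and Naranjo-Ortega --- in particular verifying the polarization type $(1,2,2)$, transporting both involutions faithfully through the conic-bundle construction, dealing with stable curves and pseudo-double covers, and using the bigonal construction to reconcile the descriptions coming from the fixed line and from a general invariant line. By comparison, the projection step and the final reconstruction are essentially a $\tau$-equivariant reprise of Clemens--Griffiths, Mumford and Beauville.
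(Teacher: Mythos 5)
Your plan follows essentially the same route as the paper: project from the pointwise fixed line, use the $\tau$-action on the Prym of the reducible discriminant quintic $C\cup Q$ to identify $JX^\tau$ (up to the duality/polarization bookkeeping, with $JX^\tau$ dual to $P(\widetilde{C},C)$), invoke the Ikeda/Naranjo--Ortega injectivity of $\calP_{1,6}$, and reconstruct $(X,\tau)$ from the branched cover data, with $\calL\cong\calO_C(1)$ re-embedding $C$ as a plane cubic so that the conic through the six branch points and hence the cubic threefold can be written down. The only deviations are cosmetic: the degenerations you propose to handle (nodal $C$, non-transverse $C\cap Q$) are ruled out by smoothness of $X$, and the bigonal comparison with projection from a general invariant line is not needed for injectivity --- the paper uses it only to describe the generic fiber of $\calP_{2,4}$.
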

	
	We also prove that the infinitesimal Torelli theorem holds for $\calP:\calM \rightarrow \calA_3^{(1,2,2)}$ over an open subset $\calM_0\subset \calM$ (this is analogous to the situation for the moduli space of smooth curves of genus greater than $2$, where the infinitesimal Torelli theorem holds for the non-hyperelliptic locus; see~Remark \ref{M0hyperelliptic}).
	
\begin{proposition}[Infinitesimal Torelli for cubic threefolds with a non-Eckardt type involution; Proposition \ref{inftorelli}]
\label{maininftorelli}
	Let $\calM_0\subset \calM$ be the open subset described in \S\ref{inftorellinonEc}. The differential $d\calP$ of the period map $\calP$ is an isomorphism at every point of $\calM_0\subset \calM$. Combining this with Theorem \ref{main}, $\calP|_{\calM_0}:\calM_0\rightarrow \calA_3^{(1,2,2)}$ is an open embedding. 
\end{proposition}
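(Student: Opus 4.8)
The plan is to reduce the infinitesimal statement to a surjectivity property of a multiplication map in the Jacobian ring of $X$, following the pattern of the classical infinitesimal Torelli theorem for cubic threefolds, but carried out $\tau$-equivariantly.

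First I would fix a normal form. Since $\tau$ is of non-Eckardt type, after a linear change of coordinates we may assume $\tau=\mathrm{diag}(-1,-1,1,1,1)$ and $X=V(F)$ with
\[ F=G(x_2,x_3,x_4)+x_0^2\ell_1+x_0x_1\ell_2+x_1^2\ell_3, \]
where $G$ is a ternary cubic (so that the fixed cubic curve is $C=X\cap\{x_0=x_1=0\}=\{G=0\}$), the $\ell_i$ are linear forms in $x_2,x_3,x_4$, and the fixed line is $L=\{x_2=x_3=x_4=0\}$. Let $R=R(F)=\mathbb{C}[x_0,\dots,x_4]/J_F$ be the Artinian Gorenstein Jacobian ring, with one-dimensional socle in degree $5$. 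Since $\tau^\ast F=F$, the involution acts on each graded piece $R_k=R_k^+\oplus R_k^-$, and matching dimensions in the perfect Macaulay pairings (equivalently, using $\tau$-invariance of the cup product on $H^3(X)$) forces $R_5=R_5^+$. Using the Griffiths identifications $H^1(X,T_X)\cong R_3$, $H^{2,1}(X)\cong R_1$, $H^{1,2}(X)\cong R_4$, all $\tau$-equivariant, I record: $T_{[X,\tau]}\mathcal{M}\cong R_3^+$; the weight-$1$ Hodge structure of $JX^\tau$ is $H^{2,1}(X)^+\oplus H^{1,2}(X)^+\cong R_1^+\oplus R_4^+$; and hence $T_{[JX^\tau]}\mathcal{A}_3^{(1,2,2)}\cong \Sym^2 R_4^+$ (the polarization type does not affect the tangent space). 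A monomial count gives $\dim R_1^+=\dim R_4^+=3$ and $\dim R_3^+=\dim\Sym^2 R_4^+=6=\dim\mathcal{A}_3$, so it suffices to prove that $d\mathcal{P}$ is injective.

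Next I would compute $d\mathcal{P}$. By construction it is the restriction to $\tau$-invariant parts of the differential of the cubic-threefold period map, which by the Carlson--Griffiths--Donagi description is cup product with the Kodaira--Spencer class, i.e.\ multiplication in $R$. So for $g\in R_3^+$, the image $d\mathcal{P}(g)\in\Sym^2 R_4^+$, realized as a symmetric homomorphism $R_1^+\to R_4^+$, is $a\mapsto a\cdot g$; equivalently $d\mathcal{P}(g)$ is the restriction to $R_1^+\times R_1^+$ of the symmetric form $R_1\times R_1\to R_5\cong\mathbb{C}$, $(a,b)\mapsto abg$. Hence $d\mathcal{P}(g)=0$ if and only if $g\cdot(R_1^+\cdot R_1^+)=0$ in $R_5$, where $R_1^+\cdot R_1^+\subseteq R_2^+$ is the image of the multiplication $\mu\colon\Sym^2 R_1^+\to R_2$. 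The $\tau$-invariant Macaulay pairing restricts to a perfect pairing $R_2^+\times R_3^+\to R_5$ of $6$-dimensional spaces, so $g\cdot R_2^+=0$ forces $g=0$. Therefore, on the locus where $\mu$ is surjective onto $R_2^+$ --- equivalently where $\langle x_2,x_3,x_4\rangle^2\cap(J_F)_2=0$ --- we obtain $d\mathcal{P}(g)=0\Rightarrow g\cdot R_2^+=0\Rightarrow g=0$, so $d\mathcal{P}$ is injective, hence an isomorphism by the dimension count. I would take $\mathcal{M}_0$ to be this locus; it is open by upper semicontinuity, and a short computation with the normal form identifies it with the locus where $\ell_1,\ell_2,\ell_3$ are linearly independent (equivalently, where the residual conic $\{4\ell_1\ell_3-\ell_2^2=0\}$ in the quintic discriminant of the conic bundle obtained by projecting $X$ from $L$ is smooth), which is a nonempty, hence dense, open subset. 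Combining injectivity of $d\mathcal{P}$ on $\mathcal{M}_0$ with the global injectivity of $\mathcal{P}$ from Theorem \ref{main} gives that $\mathcal{P}|_{\mathcal{M}_0}$ is an open embedding.

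The main obstacle is the last step: proving that $\mu\colon\Sym^2 R_1^+\to R_2^+$ is generically surjective and identifying its failure locus. This is the precise analogue, for the present situation, of the classical fact that infinitesimal Torelli for curves fails exactly on the hyperelliptic locus, where $\Sym^2 H^0(\omega_C)\to H^0(\omega_C^{\otimes 2})$ is not surjective (cf.\ Remark \ref{M0hyperelliptic}). A secondary point requiring care is the equivariant bookkeeping above, in particular verifying that it is the invariant part $H^{2,1}(X)^+$, of dimension $3$, and not $H^{1,2}(X)^+$, that furnishes $H^{1,0}(JX^\tau)$, and that the fixed line sits in the $2$-dimensional eigenspace; this is routine given the classical cubic-threefold case, but is where sign or eigenvalue slips would occur.
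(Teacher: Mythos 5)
Your proposal is correct and follows essentially the same route as the paper: identify $T\calM$ with $(R^3_F)^\tau$ and $T\calA_3^{(1,2,2)}$ with $\Sym^2((R_F^1)^\tau)^\vee$ via Griffiths' theory, identify $d\calP$ with multiplication in the Jacobian ring, and reduce to the (co)differential $\Sym^2 (R_F^1)^\tau\to (R_F^2)^\tau$ between $6$-dimensional spaces, whose kernel $J_F^2\cap\Sym^2\bC[x_0,x_1,x_2]$ is nontrivial exactly when $\ell_1,\ell_2,\ell_3$ have a common zero, i.e.\ when the conic $Q$ is singular. Your phrasing via injectivity of $d\calP$ and the perfect pairing $(R_F^2)^\tau\times(R_F^3)^\tau\to R_F^5$ is just the dual of the paper's surjectivity-of-the-codifferential argument, and your description of $\calM_0$ agrees with the paper's.
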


	The strategy for proving the global Torelli theorem for the period map $\calP$ is similar to that in \cite{MR4313238}. Specifically, we project $(X,\tau)$ from the pointwise fixed line $L\subset X$ to realize the invariant part $JX^\tau\subset JX$ as a Prym variety. In particular, we show that $JX^\tau$ is isomorphic to the dual abelian variety of the Prym variety $P(\widetilde{C},C)$ of a double cover $\pi:\widetilde{C}\rightarrow C$ of a genus $1$ curve $C$ branched in six points (see Theorem \ref{inv/antiinv part1}). The crucial element in the proof is the description of Prym varieties for (pseudo-)double covers of stable curves given in \cite{MR472843}. The associated Prym map (recall that $\calR_{g,2n}$ is the moduli space of double covers of smooth genus $g$ curves branched in $2n$ distinct points)
	\[\calP_{1,6}:\mathcal{R}_{1,6}\rightarrow \calA_3^{(1,1,2)}\] 
is known to be injective (cf.~ \cite{MR4156417} or \cite{NO_torelli}), allowing us to recover $\pi:\widetilde{C}\rightarrow C$ from $JX^\tau$. We then apply the reconstruction result in \cite{MR1786479} (see also \cite{MR2123232}) to prove Theorem \ref{main}, noting that the line bundle $\calL$ associated with the double cover $\pi$ allows one to embed $C$ into $\bP^2$ as a plane cubic (cf.~Proposition \ref{thetachar}).

	We also study the fibration in conics obtained via the projection of a (general) cubic threefold $X$ with a non-Eckardt type involution $\tau$ from a (general) invariant line $l\subset X$. Invariant but not pointwise fixed lines in $X$ are parameterized by the curve $\widetilde{C}$. In this direction, we prove that the invariant part $JX^\tau$ is isomorphic to the Prym variety $P(D_\tau,\overline{D}_l)$ associated with a double cover $b_\tau:D_\tau\rightarrow \overline{D}_l$ of a genus $2$ curve $\overline{D}_l$ ramified in four points (c.f.~Theorem \ref{inv/antiinv part2}). The main techniques used in the proof are those developed in \cite{MR0379510}, \cite{MR1188194} and \cite{MR2218008} for studying Galois covers of curves with automorphism group the Klein four group. Letting the invariant line $l$ vary, one would expect the generic injectivity of the natural map from $\widetilde{C}$ to the generic fiber of the associated Prym map 
	\[\calP_{2,4}:\mathcal{R}_{2,4}\rightarrow \calA_3^{(1,2,2)}.\] 
However, this is not the case -- if two invariant lines $l$ and $l'$ form a coplanar pair (corresponding to a point $c_{l\cup l'}\in C$) meeting the pointwise invariant line $L$, then the double covers $b_\tau:D_\tau\rightarrow \overline{D}_l$ and $b'_\tau:D'_\tau\rightarrow \overline{D}_{l'}$ are isomorphic. Indeed, we prove the following result which relates the double covers $b_\tau:D_\tau\rightarrow \overline{D}_l$ (respectively, $b'_\tau:D'_\tau\rightarrow \overline{D}_{l'}$) and $\pi:\widetilde{C}\rightarrow C$ via the bigonal construction (see for example \cite{MR1188194}). This allows us to apply the argument in \cite{NO_torelli} or \cite{MR4435960} to show that the generic fiber of $\calP_{2,4}:\mathcal{R}_{2,4}\rightarrow \calA_3^{(1,2,2)}$ over a general member $JX^\tau\in \calA_3^{(1,2,2)}$ is birational to the elliptic curve $C$ (cf.~Proposition \ref{genericfibC}). 
\begin{proposition}[Projection from the pointwise fixed line vs.~ projection from a general invariant line; Proposition \ref{bigonalLl}] \label{mainbigonalLl}
	Notation as above. The towers of double covers 
	\[D_\tau\stackrel{b_\tau}{\rightarrow} \overline{D}_l\stackrel{r}{\rightarrow} \bP^1;\,\,\,D'_\tau\stackrel{b'_\tau}{\rightarrow} \overline{D}_{l'}\stackrel{r'}{\rightarrow} \bP^1\]
are both bigonally related to the tower of double covers $\widetilde{C}\stackrel{\pi}{\rightarrow} C\stackrel{p}{\rightarrow}\bP^1$, 
where $r:\overline{D}_l\rightarrow \bP^1$ (respectively, $r':\overline{D}_{l'}\rightarrow \bP^1$) denotes the map determined by the unique $g^1_2$ of the genus $2$ curve $\overline{D}_l$ (respectively, $\overline{D}_{l'}$) and $p:C\rightarrow \bP^1$ is the projection map from the point $c_{l\cup l'}\in C$. In particular, $b_\tau:D_\tau\rightarrow \overline{D}_l$ and $b'_\tau:D'_\tau\rightarrow \overline{D}_{l'}$ are isomorphic.
 \end{proposition}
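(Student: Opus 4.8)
The plan is to show that each of the two towers on the left-hand side is the bigonal transform of the tower $\widetilde{C}\stackrel{\pi}{\rightarrow} C\stackrel{p}{\rightarrow}\bP^1$, once the relevant copies of $\bP^1$ have been identified; the final assertion of the Proposition then follows formally, since the bigonal construction is (generically) an involution on towers of double covers, so that the bigonal transform of a fixed tower is determined up to isomorphism. Recall (see e.g.~\cite{MR1188194}) that the bigonal construction sends a tower $\widetilde{Y}\rightarrow Y\rightarrow \bP^1$ of double covers to a new tower $\widetilde{Y}'\rightarrow Y'\rightarrow \bP^1$ as follows: over a general $t\in \bP^1$ the four points of $\widetilde{Y}$ over $t$ are partitioned by $\widetilde{Y}\rightarrow Y$ into two pairs, $Y'$ parametrizes the two re-partitionings into pairs meeting each of the old pairs exactly once, and $\widetilde{Y}'$ parametrizes the new pairs. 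One has $\mathrm{big}(\mathrm{big}(-))\cong\mathrm{id}$, and a tower and its bigonal transform have Prym varieties that agree up to duality; this is consistent with the isomorphisms $P(D_\tau,\overline{D}_l)\cong P(D'_\tau,\overline{D}_{l'})\cong JX^\tau$ of Theorem \ref{inv/antiinv part2}, but --- since the Prym map $\calP_{2,4}$ is not injective --- the Prym statement by itself does not yield $b_\tau\cong b'_\tau$, which is the point of invoking the bigonal construction.

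I would first fix the common base. By hypothesis $c_{l\cup l'}\in C$ is the point with $\pi^{-1}(c_{l\cup l'})=\{l,l'\}$, so that $l$ and $l'$ form a coplanar pair whose span $\Pi=\langle l,l'\rangle$ meets $L$ as in the statement. The base $\bP^1$ of the bottom map $p$ is then the pencil of lines through $c_{l\cup l'}$ in the plane $\bP^2$ into which $C$ is embedded as a plane cubic by Proposition \ref{thetachar}; the goal is to identify this $\bP^1$ with the base of the $g^1_2$ of the genus $2$ curve $\overline{D}_l$, in such a way that $p:C\rightarrow\bP^1$ and $r:\overline{D}_l\rightarrow\bP^1$ become the two bottom double covers to be compared. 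This identification is geometric: it uses the explicit pencils of planes (through $L$, respectively through the center of the second projection) appearing in the projection of $X$ from $L$ and in the projection of $X$ from $l$, together with the incidences of $l$, $l'$ with $L$ inside $\Pi$.

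The heart of the argument is the fibrewise identification of the two degree $4$ covers $\widetilde{C}\rightarrow\bP^1$ and $D_\tau\rightarrow\bP^1$, with their factorizations. A point of $\widetilde{C}$ is an invariant (not pointwise fixed) line $m\subset X$; it meets $\mathrm{Fix}(\tau)=L\cup C$ in a point of $L$ and a point of $C$, the latter being $\pi(m)$. A point of $\overline{D}_l$ records one of the two components of a reducible fibre of the conic bundle obtained by projecting $X$ from $l$. Using the Klein four-group symmetry governing the Galois covers of curves attached to this projection --- as developed in \cite{MR0379510}, \cite{MR2218008} --- I would show that over a general $t\in\bP^1$ the four invariant lines counted by $\widetilde{C}$ and the four conic-components counted by $D_\tau$ are in natural bijection, and that the partition induced by $\widetilde{C}\rightarrow C$ and the partition induced by $D_\tau\rightarrow\overline{D}_l$ are exactly the two re-partitionings produced by the bigonal construction. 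A Riemann--Hurwitz check --- $\pi$ branched at $6$ points, $p$ at $4$, $r$ at $6$, $b_\tau$ at $4$, so $g(\widetilde{C})=4$, $g(C)=1$, $g(\overline{D}_l)=2$, $g(D_\tau)=5$ and $\dim P(\widetilde{C},C)=\dim P(D_\tau,\overline{D}_l)=3$ --- fixes the numerical type and shows that the behaviour over the ramification points and over the nodal (pseudo-double-cover) loci is accounted for. Since $l$ and $l'$ play symmetric roles in the configuration $(L,\Pi)$, the identical computation with $l'$ in place of $l$ shows that $D'_\tau\rightarrow\overline{D}_{l'}\rightarrow\bP^1$ is also the bigonal transform of $\widetilde{C}\rightarrow C\rightarrow\bP^1$; by involutivity of the construction the two towers on the left are therefore isomorphic, and in particular $b_\tau\cong b'_\tau$.

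The main obstacle is precisely this fibrewise matching: one must translate the delicate geometry of invariant lines of $X$ meeting the coplanar pair $\{l,l'\}$ into the conic-bundle data over $\overline{D}_l$ --- tracking the two rulings on each reducible conic, the ramification of $b_\tau$ and of $\pi$, and the stable degenerations of the (pseudo-)double covers --- and check that the two evident partitions of each $4$-element fibre coincide with the pair of partitions prescribed by the bigonal construction. Confirming that the configurations that actually occur are general enough for the bigonal construction to behave as an honest involution, rather than one of its degenerate forms, is the remaining technical point.
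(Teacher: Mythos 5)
Your overall strategy is the paper's: identify the common base $\bP^1$, show that each tower $D_\tau\to\overline{D}_l\to\bP^1$ and $D'_\tau\to\overline{D}_{l'}\to\bP^1$ is bigonally related to $\widetilde{C}\to C\to\bP^1$, and conclude $b_\tau\cong b'_\tau$ from the symmetry of the bigonal construction (and you correctly note that the Prym isomorphisms of Theorem \ref{inv/antiinv part2} alone cannot give this). However, the step you yourself call the heart of the argument is not carried out, and as formulated it cannot work. You propose to produce, over a general $t\in\bP^1$, a \emph{natural bijection} between the four points of $\widetilde{C}$ over $t$ (invariant lines) and the four points of $D_\tau$ over $t$ (orbits of conic components), with the two partitions being the two re-partitionings of the bigonal construction. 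Any such identification, natural over an open part of $\bP^1$, would glue to an isomorphism $\widetilde{C}\cong D_\tau$ over $\bP^1$; this is impossible since $g(\widetilde{C})=4$ while $g(D_\tau)=5$ (numbers you record in your own Riemann--Hurwitz check). Indeed, an invariant line does not single out one $\tau$-orbit of components: it determines one orbit over \emph{each} of the two points of $\overline{D}_l$ lying over $t$, i.e.\ a point of the curve of liftings $r_*D_\tau$, which is the bigonal transform --- not a point of $D_\tau$ itself.

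What is missing is exactly the geometric mechanism the paper uses to realize this. One takes the pencil of $\tau$-invariant hyperplanes $H_y$ containing the plane $\langle L,l,l'\rangle$ (this pencil also identifies the base of $p$, the projection of $C$ from $c_{l\cup l'}$, with the base of the $g^1_2$ on $\overline{D}_l=D_l/\tau_{D_l}$, since projection of $D_l$ from the fixed point $d_{L\cup l'}$ is $\tau_{D_l}$-invariant). For general $y$, $X\cap H_y$ is a smooth cubic surface with an involution $\tau_y$; the four coplanar pairs of lines meeting $l$ other than $L\cup l'$ are interchanged in pairs by $\tau$, giving the two points of $\overline{D}_l$ over $y$, and each of the four $\tau_y$-invariant lines meeting $L$ other than $l,l'$ (these are the points of $\widetilde{C}$ over $y$) meets exactly one line in each of the two relevant $\tau$-orbits, hence defines a point of $r_*D_\tau$. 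This yields an injection $\widetilde{C}\hookrightarrow r_*D_\tau$, and one must then argue (this is where the genericity you defer is genuinely needed) that for general $(X,\tau,l)$ the curve $r_*D_\tau$ is smooth and irreducible, so the injection is an equality and the bigonal construction is non-degenerate. With that in place, your concluding use of the symmetry of the bigonal construction, applied to both $l$ and $l'$ (which induce the same map $p=p_{l\cup l'}$ on $C$), does give $b_\tau\cong b'_\tau$ as in the paper. Without the incidence argument on the cubic surfaces and the identification of $\widetilde{C}$ with $r_*D_\tau$ (rather than with $D_\tau$), the proposal has a genuine gap at its central step.
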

  
	Finally, we note that exhibiting the invariant part $JX^{\tau}$ as a Prym variety turns out to be crucial for the ongoing project of the second named author, in applying the LSV construction (cf.~ \cite{MR3710794} and \cite{sac2021birational}) to cubic fourfolds with a non-Eckardt type involution. This  is important in the geometric study of involutions of hyper-K\"ahler manifolds of $OG10$ type (see \cite{marquandOG10}), particularly those involutions induced from a cubic fourfold. Recall that the Prym construction of the intermediate Jacobian of a cubic threefold is central to the work in \cite{MR3710794}, which associates to a cubic fourfold a hyper-K\"ahler manifold of $OG10$ type. 
	
	We now give an outline of the paper; we work throughout over the complex numbers $\bC$. In \S\ref{prelims}, we introduce our objects of interest, namely cubic threefolds $X$ with a non-Eckardt type involution $\tau$. We also investigate lines that are invariant under such an involution. In \S\ref{conic fib 1}, we exhibit such a cubic threefold $X$ as a conic fibration via projection from the pointwise fixed line $L\subset X$ and describe the invariant part $JX^{\tau}$ as a Prym variety. Using this description, we prove global and infinitesimal Torelli theorems for the period map $\calP:\calM \rightarrow \calA_3^{(1,2,2)}$ in \S\ref{torelli}.  Finally, we discuss an alternative description for $J(X)^\tau$, obtained by projecting $X$ from an invariant line different from $L$ in \S\ref{conic fib 2}.
		
	\subsection*{Acknowledgements} We would like to thank Radu Laza, Angela Ortega, Gregory Pearlstein and Roy Smith for helpful conversations related to the subject. We are grateful to Michael Rapoport for the interest in the paper. Special thanks go to Giulia Sacc\`a; in particular, the proof of Proposition \ref{prymdecomp1} is similar to that of \cite[Prop.~3.10]{MR4313238} which was due to her. We also thank the referee for helpful suggestions. Research of the first named author is supported in part by a grant from the Simons Foundation (581058). Research of the second named author is supported in part by NSF grant DMS-2101640 (PI Laza). Research of the third named author is supported in part by NSFC grant 12201406.

\section{Cubic threefolds with a non-Eckardt type involution} \label{prelims}
	
	In \S\ref{geometry}, we introduce cubic threefolds $X$ with a non-Eckardt type involution $\tau$. We then define a period map $\calP$ for these cubic threefolds in \S\ref{period}. In order to study the period map $\calP$, we will project $X$ from an invariant line to exhibit the intermediate Jacobian $JX$ as a Prym variety; we investigate the $\tau$-invariant lines that are contained in $X$ in \S\ref{inv}.

	\subsection{Involutions of cubic threefolds} \label{geometry} Let $X\subset \bP^{n+1}$ be a smooth hypersurface of degree $d$. Denote by $\Aut(X)$ the group of automorphisms of $X$, and by $\Lin(X)$ the subgroup of $\Aut(X)$ consisting of automorphisms induced by projective transformations of the ambient projective space leaving $X$ invariant. By \cite[Thm.~ 1 and 2]{MR168559}, assuming $n\geq2, d\geq 3$ we have that $\Aut(X)=\Lin(X)$, except in the case $n=2, d=4$. Moreover, $\Aut(X)$ is finite (again excluding the case $n=2, d=4$). As a consequence (and specifying to the case $n=d=3$), one can obtain a complete classification of prime order automorphisms of smooth cubic threefolds (e.g.~\cite[Thm.~ 3.5]{MR2820585}, see also the references in \cite[Rmk.~1.6]{MR4363785}). In particular, for involutions we have the following classification.
\begin{proposition} \label{tau1tau2}
	Let $X=V(F)$ be a smooth cubic threefold in $\bP^4$ that admits an involution $\tau$. Applying a linear change of coordinates, we can diagonalize $\tau$, so that 
	\[\tau:\bP^4\rightarrow \bP^4, \,\,\, [x_0,\dots, x_4]\mapsto [(-1)^{a_0}x_0,\dots, (-1)^{a_4}x_4],\] with $a_i\in \{0,1\}$. Let $a:=(a_0,\dots a_4)$, and let $D$ be the dimension of the family of smooth cubic threefolds that admit the involution $\tau$. Then
\begin{enumerate}
	\item either $a=(0,0,0,0,1)$ and $\tau=\tau_1$ fixes pointwise a hyperplane section $S\subset X$ and a point $p\in X\setminus S$. We have that $D=7$ and 
	\[F=f(x_0,x_1,x_2,x_3)+\ell(x_0,x_1,x_2,x_3)x_4^2,\]
		where $\ell$ is a homogeneous linear polynomial, and $f$ is homogeneous of degree $3$.
	\item or $a=(0,0,0,1,1)$ and $\tau=\tau_2$ fixes pointwise a line $l\subset X$ and a plane cubic $C\subset X$. We have that $D=6$ and
	\[F= x_0q_0(x_3, x_4)+x_1q_1(x_3, x_4)+x_2q_2(x_3,x_4)+g(x_0,x_1,x_2),\]
		 where each $q_i$ is a homogeneous quadratic polynomial and $g$ is a homogeneous cubic polynomial.
\end{enumerate}
\end{proposition}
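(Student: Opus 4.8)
The plan is to combine the rigidity theorem for automorphisms of smooth cubic threefolds with a short linear-algebra normalization of $\tau$, and then to read off the normal forms, the fixed loci and the parameter counts directly from the diagonalized action. First I would normalize $\tau$: since $(n,d)=(3,3)$ is not the exceptional pair $(2,4)$, \cite[Thm.~1 and 2]{MR168559} gives $\Aut(X)=\Lin(X)$ with $\Aut(X)$ finite, so $\tau$ is induced by a projective transformation. Lifting it to $g\in\GL_5(\bC)$, the relation $\tau^2=\mathrm{id}$ gives $g^2=\lambda I$ for some $\lambda\in\bC^\times$; rescaling $g$ by a square root of $\lambda^{-1}$ we may assume $g^2=I$, so $g$ is diagonalizable with eigenvalues in $\{\pm 1\}$ and, in eigencoordinates, $\tau=\mathrm{diag}((-1)^{a_0},\dots,(-1)^{a_4})$. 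Because $-g$ represents the same class in $\PGL_5$ and corresponds to the complementary vector $(1,\dots,1)-a$, while $\tau\neq\mathrm{id}$ forces $a$ to be nonconstant, after replacing $a$ by its complement if necessary and permuting the coordinates we are reduced to exactly $a=(0,0,0,0,1)$ and $a=(0,0,0,1,1)$.

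Next I would determine the normal forms and fixed loci. A degree-$3$ monomial $x_0^{b_0}\cdots x_4^{b_4}$ is scaled by $(-1)^{\sum_i a_ib_i}$, so an eigenvector $F$ of $\tau$ acting on cubics is supported on monomials of a single fixed parity of $\sum_i a_ib_i$, and invariance of $X=V(F)$ forces $F$ to be such an eigenvector. In the anti-invariant eigenspace every monomial is divisible by one of the anti-invariant coordinates, so $F\in(x_4)$ when $a=(0,0,0,0,1)$ and $F\in(x_3,x_4)$ when $a=(0,0,0,1,1)$; in either case $V(F)$ then contains the fixed hyperplane $\{x_4=0\}$, respectively the fixed plane $\{x_3=x_4=0\}$, hence is singular — a smooth cubic threefold contains no plane, as one sees by writing $F=x_3G_3+x_4G_4$ and applying B\'ezout to $\{G_3=0\}\cap\{G_4=0\}$ inside that plane. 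Thus for smooth $X$ the form $F$ is $\tau$-invariant, i.e. supported on monomials with $\sum_i a_ib_i$ even: for $a=(0,0,0,0,1)$ this says $x_4$ occurs to even degree, giving $F=f(x_0,\dots,x_3)+\ell(x_0,\dots,x_3)x_4^2$; for $a=(0,0,0,1,1)$ it says $x_3,x_4$ occur to total even degree, giving $F=x_0q_0(x_3,x_4)+x_1q_1(x_3,x_4)+x_2q_2(x_3,x_4)+g(x_0,x_1,x_2)$. The fixed locus of $\tau$ in $\bP^4$ is the union of its two eigenspaces, which are disjoint in $\bP^4$: in case (1) the hyperplane $\{x_4=0\}$ and the point $p=[0:0:0:0:1]$, in case (2) the plane $\Pi=\{x_3=x_4=0\}$ and the line $l=\{x_0=x_1=x_2=0\}$. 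Intersecting with $X$ and using the normal forms: in case (1), $X\cap\{x_4=0\}=V(f)$ is a hyperplane section of $X$, and $p\in X$ automatically since $f$ and $\ell$ have no constant term, which is the assertion in (1); in case (2), $X\cap\Pi=V(g)$ is a plane cubic curve $C$, and $l\subset X$ automatically because every monomial of $F$ is divisible by one of $x_0,x_1,x_2$, which is the assertion in (2).

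Finally I would do the dimension counts. The space of $\tau$-invariant cubics has projective dimension $20+4-1=23$ in case (1) (a cubic in four variables plus a linear form in four variables) and $3\cdot 3+10-1=18$ in case (2) (three binary quadrics plus a ternary cubic). The subgroup of $\PGL_5$ preserving this linear system is the centralizer of $\tau$, namely $(\GL_4\times\GL_1)/\bC^\times$ of dimension $16$ in case (1) and $(\GL_3\times\GL_2)/\bC^\times$ of dimension $12$ in case (2); since a general member of each system is smooth (the open smooth locus is nonempty by an explicit example) with finite stabilizer in $\PGL_5$ (again by \cite{MR168559}), the locus of smooth cubic threefolds admitting $\tau$, up to projective equivalence, has dimension $D=23-16=7$ in case (1) and $D=18-12=6$ in case (2).

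The only steps that are not purely mechanical are, I expect, the exclusion of the anti-invariant eigenspace — equivalently, the fact that a smooth cubic threefold cannot contain the fixed plane — and the bookkeeping behind $D$: one must identify the centralizer of $\tau$ as the full stabilizer of the invariant linear system and know that a general invariant cubic is smooth with finite automorphism group, so that the naive count $\dim(\text{invariant cubics})-\dim(\text{centralizer})$ gives the correct value.
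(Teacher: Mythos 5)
Your proposal is correct, and it takes a genuinely different route from the paper: the paper gives no argument at all for this proposition, simply citing the classification of prime-order automorphisms of smooth cubic threefolds in \cite[Thm.~3.5]{MR2820585}, whereas you give a self-contained elementary proof. Your normalization of $\tau$ (lift to $\GL_5$, rescale to an honest involution, diagonalize, pass to the complement, reduce to $a=(0,0,0,0,1)$ or $(0,0,0,1,1)$), the parity analysis forcing $F$ to be a $\tau$-eigenvector, and the exclusion of the anti-invariant eigenspace via the fact that $V(F)$ would then contain the fixed hyperplane (reducible) or the fixed plane (singular, by your B\'ezout argument on $G_3|_\Pi\cap G_4|_\Pi$) are all sound, and the normal forms, fixed loci, and counts $D=23-16=7$ and $D=18-12=6$ come out right. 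The only step worth phrasing more carefully is the justification of the naive count $D=\dim(\text{invariant cubics})-\dim C(\tau)$: rather than identifying $C(\tau)$ as ``the stabilizer of the linear system,'' the clean statement is that the fiber of the map $\{\tau\text{-invariant smooth cubics}\}\rightarrow\{\text{moduli of cubic threefolds}\}$ over $[X]$ is the set $\{g\cdot X: g^{-1}\tau g\in\Lin(X)\}$, which by finiteness of $\Lin(X)$ is a finite union of orbits of the centralizer $C(\tau)$, hence of dimension $\dim C(\tau)$; together with the existence of a smooth invariant member (an explicit example, e.g.\ the one used in the proof of Lemma \ref{JXsplit} for case (2), or the Fermat cubic for case (1)) this gives exactly your values of $D$. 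What your approach buys is independence from the cited classification; what the citation buys the paper is brevity and the full classification of all prime orders, of which the involution case is only a small part.
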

\begin{proof}
	See \cite[Thm.~ 3.5]{MR2820585}.
\end{proof}
	
Admitting an involution of type $\tau_1$ is equivalent to the existence of an Eckardt point. Such a cubic is called an \emph{Eckardt} cubic and has been well studied (see for example \cite{MR3886178} and \cite{MR4313238}). In this paper, we will focus on studying involutions of the type $\tau_2$; we make the following definition.

\begin{definition} \label{non-eckardt}
	We call an involution on a smooth cubic threefold of type $\tau_2$ (as in Proposition \ref{tau1tau2}) an involution of \emph{non-Eckardt type}.
\end{definition}

Throughout, $X\subset \bP^4$ is a smooth cubic threefold with an involution $\tau$ of non-Eckardt type with equation
\begin{equation} \label{eqn:X}	
	F= x_0q_0(x_3, x_4)+x_1q_1(x_3, x_4)+x_2q_2(x_3,x_4)+g(x_0,x_1,x_2)=0,
\end{equation} 
where each $q_i(x_3,x_4)$ is homogeneous of degree $2$ and $g(x_0,x_1,x_2)$ is homogeneous of degree $3$. The involution 
\begin{equation} \label{eqn:tau}
\tau:[x_0,x_1,x_2,x_3,x_4]\mapsto[x_0,x_1,x_2,-x_3,-x_4]
\end{equation}
fixes two complementary linear subspaces of $\bP^4$ pointwise; the line $L:=V(x_0,x_1,x_2)$ and the plane $\Pi:=V(x_3,x_4)$. Notice that the line $L\subset X$, and the fixed curve $C$ is given by the intersection $X\cap \Pi$; i.e. $C=V(g(x_0,x_1,x_2), x_3, x_4)$.

\begin{lemma} \label{Csmooth}
	Let $(X,\tau)$ be a smooth cubic threefold with a non-Eckardt type involution. Then the fixed curve $C \subset X$ as above is smooth.
\end{lemma}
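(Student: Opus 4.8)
The plan is to argue by contradiction, deducing a singular point of $X$ from a hypothetical singular point of $C$. Recall that $C=V(g(x_0,x_1,x_2),x_3,x_4)$ sits inside the fixed plane $\Pi=V(x_3,x_4)\cong\bP^2$ as the plane cubic cut out by the form $g$, so it suffices to show that $g$, viewed as a cubic form in $x_0,x_1,x_2$, defines a smooth plane curve.

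Suppose not, and let $p=[p_0:p_1:p_2:0:0]\in\Pi$ be a singular point of $V(g)\subset\Pi$. Since we work in characteristic $0$, Euler's relation shows this means precisely that $\tfrac{\partial g}{\partial x_i}(p)=0$ for $i=0,1,2$, and then $g(p)=0$ automatically, so $p\in C\subset X$. I would then compute the five partial derivatives of
\[
F = x_0 q_0(x_3,x_4) + x_1 q_1(x_3,x_4) + x_2 q_2(x_3,x_4) + g(x_0,x_1,x_2)
\]
and evaluate them at $p$. The key bookkeeping point is that every monomial of $F$ involving $x_3$ or $x_4$ actually lies in $(x_3,x_4)^2$, because each $q_i(x_3,x_4)$ is homogeneous of degree $2$; hence each $q_i$ and each of $\tfrac{\partial q_i}{\partial x_3}$, $\tfrac{\partial q_i}{\partial x_4}$ vanishes at the point $p\in\Pi$. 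Consequently $\tfrac{\partial F}{\partial x_i}(p)=q_i(0,0)+\tfrac{\partial g}{\partial x_i}(p)=0$ for $i=0,1,2$, while $\tfrac{\partial F}{\partial x_j}(p)=\sum_{i=0}^{2}p_i\tfrac{\partial q_i}{\partial x_j}(0,0)=0$ for $j=3,4$. Thus all partials of $F$ vanish at $p$, so $p\in\Sing(X)$, contradicting the smoothness of $X$; therefore $C$ is smooth.

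The argument is entirely elementary and there is no real obstacle here, only the observation that the ``mixed'' part $\sum_i x_i q_i(x_3,x_4)$ of $F$ contributes nothing to any first-order information of $X$ along the fixed plane $\Pi$, since both it and its gradient vanish identically on $\Pi$. (The analogous computation at a point of $L=V(x_0,x_1,x_2)$ shows, dually, that $X$ is smooth along $L$ if and only if the conics $q_0,q_1,q_2$ have no common zero on the $\bP^1$ with coordinates $[x_3:x_4]$, but this is not needed for the present statement.)
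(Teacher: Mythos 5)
Your proposal is correct and is essentially the paper's own argument: the paper also assumes a singular point $a=[a_0,a_1,a_2]$ of $C$ with $\tfrac{\partial g}{\partial x_i}(a)=0$ and observes that the partial derivatives of Equation \eqref{eqn:X} then all vanish at $[a_0,a_1,a_2,0,0]$, contradicting smoothness of $X$. You merely spell out the bookkeeping (that the mixed terms $x_iq_i(x_3,x_4)$ and their gradients vanish along $\Pi$), which the paper leaves implicit.
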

\begin{proof}
	Suppose that $C$ is not smooth. Then there exists $a=[a_0,a_1,a_2]\in C\subset \Pi\cong \bP^2_{x_0,x_1,x_2}$ such that $\frac{\partial g}{\partial x_i}(a)=0$ for $i=0,1, 2.$ Taking partial derivatives of the Equation (\ref{eqn:X}) shows that $X$ is singular at the point $[a_0,a_1,a_2,0,0]$. 
\end{proof}

	\subsection{The period map for cubic threefolds with a non-Eckardt type involution} \label{period} Let $X$ be a cubic threefold with the involution $\tau$ of non-Eckardt type as discussed in the previous subsection. By abuse of notation, we use $\tau$ to denote the involution on the principally polarized intermediate Jacobian $JX$ induced by the involution $\tau$ of $X$. Define the invariant part $JX^\tau$ and the anti-invariant part $JX^{-\tau}$ respectively by 
\begin{equation}
	JX^\tau := \im(1+\tau);\,\,\, JX^{-\tau}:=\im(1-\tau).
\end{equation}
Note that $JX^\tau$ and $JX^{-\tau}$ are $\tau$-stable complementary abelian subvarieties of $JX$ (cf.~\cite[Prop.~ 13.6.1]{MR2062673}). 

\begin{lemma} \label{JXsplit}
	The abelian subvarieties $JX^\tau$ and $JX^{-\tau}$ have dimensions 3 and 2 respectively. The principal polarization of $JX$ induces polarizations of type $(1,2,2)$ and $(2,2)$ on $JX^\tau$ and $JX^{-\tau}$ respectively.
\end{lemma}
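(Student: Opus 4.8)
The plan is to combine Griffiths' description of the Hodge structure on $H^{3}(X)$ with an elementary lattice computation. \emph{Dimensions.} Since $X$ is a smooth cubic threefold, $H^{3,0}(X)=0$ and the Griffiths residue map identifies $H^{2,1}(X)$ with the degree-$1$ part of the Jacobian ring $\bC[x_0,\dots,x_4]/J_F$, where $J_F=(\partial_0 F,\dots,\partial_4 F)$; as $J_F$ is generated in degree $2$, this is simply $\langle x_0,\dots,x_4\rangle$. The generator $\Omega=\sum_i(-1)^i x_i\, dx_0\wedge\cdots\wedge\widehat{dx_i}\wedge\cdots\wedge dx_4$ of $H^0(\bP^4,\Omega^4_{\bP^4}(5))$ is $\tau$-invariant (in each term the variables $x_3,x_4$ and the differentials $dx_3,dx_4$ together produce an even number of sign changes), and $F$ is $\tau$-invariant by \eqref{eqn:X}; hence the residue isomorphism is $\tau$-equivariant, with $\tau$ fixing $x_0,x_1,x_2$ and negating $x_3,x_4$. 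Thus $\dim H^{2,1}(X)^{\tau}=3$ and $\dim H^{2,1}(X)^{-\tau}=2$. Writing the uniformization $JX=W/\Lambda$ with $W=H^3(X,\bC)/F^2H^3(X,\bC)\cong H^{1,2}(X)$ and $\Lambda=H_1(JX,\bZ)\cong H^3(X,\bZ)$, and letting $W^{\pm}$, $\Lambda^{\pm}=W^{\pm}\cap\Lambda$ be the $\tau$-eigenspaces and eigenlattices, one has $JX^{\tau}=\im(1+\tau)=(W^{+}+\Lambda)/\Lambda\cong W^{+}/\Lambda^{+}$ and likewise $JX^{-\tau}\cong W^{-}/\Lambda^{-}$; since $\dim W^{\pm}=\dim H^{2,1}(X)^{\pm\tau}$, this gives $\dim JX^{\tau}=3$ and $\dim JX^{-\tau}=2$.

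\emph{Polarization types.} The principal polarization of $JX$ is the unimodular alternating intersection form $E$ on $\Lambda$; it is $\tau$-invariant, so the polarizations it induces on $JX^{\tau}$ and $JX^{-\tau}$ are $E|_{\Lambda^{+}}$ and $E|_{\Lambda^{-}}$, and I must compute the elementary-divisor types of these two forms. The sublattices $\Lambda^{\pm}$ are saturated, hence primitive in $\Lambda$, with $(\Lambda^{+})^{\perp}=\Lambda^{-}$; as $\Lambda$ is unimodular this yields $A_{\Lambda^{+}}\cong A_{\Lambda^{-}}\cong\Lambda/(\Lambda^{+}\oplus\Lambda^{-})$, and since $2x=(x+\tau x)+(x-\tau x)$ shows $2\Lambda\subseteq\Lambda^{+}\oplus\Lambda^{-}$, this common discriminant group is $(\bZ/2)^{k}$ for some $k$. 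Matching $(\bZ/2)^{k}$ with the elementary divisors of $E|_{\Lambda^{+}}$ (rank $6$, with $d_1\mid d_2\mid d_3$) and of $E|_{\Lambda^{-}}$ (rank $4$, with $e_1\mid e_2$) forces all $d_i,e_i\in\{1,2\}$ and $k=2\,\#\{i:d_i=2\}=2\,\#\{i:e_i=2\}$, so $k\in\{0,2,4\}$ and the corresponding types of $(JX^{\tau},JX^{-\tau})$ are $\{(1,1,1),(1,1)\}$, $\{(1,1,2),(1,2)\}$, or $\{(1,2,2),(2,2)\}$. It therefore remains to show $k=4$.

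\emph{Computing $k$: the main obstacle.} Here $k$ equals the $\bF_2$-rank of $\tau-\mathrm{id}$ acting on $H^3(X,\bF_2)=\Lambda/2\Lambda$; since $\Lambda^{+}$ is saturated of rank $6$, its reduction lies in $\ker(\tau-\mathrm{id})$, so $k\le 4$ automatically, and the content of the lemma is the reverse inequality --- equivalently, that $H^3(X,\bZ)$ carries no trivial-mod-$2$, $\tau$-anti-invariant (``sign'') summand as a $\bZ[\tau]/(\tau^2-1)$-module, i.e.\ $(1-\tau)\Lambda=\Lambda^{-}$. This is precisely where genuine information about $(X,\tau)$ is needed: the formal fact that $JX^{\tau}$ and $JX^{-\tau}$ are complementary abelian subvarieties of a principally polarized abelian variety excludes none of the three cases (indeed $k=0$ would even force $JX\cong JX^{\tau}\times JX^{-\tau}$ into principally polarized factors). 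I would settle it in one of two ways. First, the polarization type is locally constant, hence constant on the irreducible family $\calM$, so it suffices to write $\Lambda^{\pm}$ explicitly for a single conveniently symmetric member $(X,\tau)$ and read off $k=4$. Alternatively --- and this is cleaner --- one invokes the conic-bundle/Prym description of \S\ref{conic fib 1} (whose proof does not use the present lemma), which exhibits $JX^{\tau}$ as the dual abelian variety of the Prym variety $P(\widetilde C,C)$ of a double cover of the genus-$1$ curve $C$ branched at six points: such a Prym is polarized of type $(1,1,2)$, so its dual $JX^{\tau}$ has type $(1,2,2)$, forcing $k=4$ and hence type $(2,2)$ on $JX^{-\tau}$. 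In either approach, verifying that the gluing index is exactly $2^{4}$ --- that nothing splits off --- is the one step beyond routine Hodge- and lattice-theoretic bookkeeping.
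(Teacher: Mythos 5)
Your proposal is correct and follows essentially the same route as the paper: the dimensions come from the $\tau$-eigenspace decomposition of $H^{2,1}(X)\cong R^1_F$ via Griffiths residues, and the polarization types are ultimately pinned down by the Prym description of $JX^\tau$ as $(P(\widetilde C,C))^\vee$ from Theorem \ref{inv/antiinv part1}, exactly as the paper does (your observation that $A_{\Lambda^+}\cong A_{\Lambda^-}$ forces the same number of $2$'s in both types is the paper's appeal to \cite[Lem.~1.13]{MR4313238}). You correctly identify that the purely formal lattice argument only narrows the answer to three cases and that genuine geometric input is needed to get $k=4$; your two proposed ways of supplying it (specialization to one member, or the conic-bundle/Prym computation) are both legitimate, and the second is the one the paper uses.
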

\begin{proof}
	The abelian subvarieties $JX^\tau$ and $JX^{-\tau}$ correspond to the symmetric idempotents $\frac{1+\tau}{2}$ and   
$\frac{1-\tau}{2}$ in $\mathrm{End}_\bQ(JX)$ respectively. Using \cite[Prop.~ 5.3.10]{MR2062673}, we compute their dimensions by studying the eigenspace decomposition of $\tau$ on $H^{1,2}(X)$ or $H^{2,1}(X)$ for a particular smooth cubic threefold with an  involution of non-Eckardt type (e.g.~ $V(x_0x_3^2+x_0x_4^2+x_1x_3^2+x_2x_4^2-x_0^3+x_1^3+x_2^3)$). Identifying the eigenspaces is a standard computation using Griffiths residues (see for instance \cite[Thm.~ 3.2.10]{MR3727160}). The claim on the polarization types will be proved later in Theorem \ref{inv/antiinv part1} (see also Theorem \ref{inv/antiinv part2}). Note that here the number of $2$'s in the polarization types for $JX^\tau$ and $JX^{-\tau}$ are the same which for instance follows from \cite[Lem.~ 1.13]{MR4313238}.
\end{proof}

	Let $\calM$ be the moduli space of cubic threefolds $X$ with an involution $\tau$ of non-Eckardt type constructed using GIT (see for example \cite[\S2.2]{MR4190414}). Let $\calA_3^{(1,2,2)}$ be the moduli space of abelian threefolds with a polarization of type $(1,2,2)$. Note that $\dim \calM=6$ and $\dim\calA_3^{(1,2,2)}=6.$ Define a period map (via Lemma \ref{JXsplit}):	
	\[\calP:\calM \longrightarrow \calA_3^{(1,2,2)}\]
	\[(X,\tau) \mapsto JX^\tau\]
which sends a smooth cubic threefold $X$ with an involution $\tau$ of non-Eckardt type to the invariant part $JX^\tau$ of the intermediate Jacobian $JX$.

	\subsection{Invariant lines} \label{inv} In order to study the period map $\calP$ in \S\ref{period}, we will need to understand how the intermediate Jacobian $JX$ of such a cubic threefold $(X,\tau)$ decomposes with respect to the involution $\tau$. As in \cite{MR4313238}, our strategy will be to project $X$ from a $\tau$-invariant line to exhibit $JX$ as the Prym variety of the associated discriminant double cover.

\begin{lemma} \label{inv lines}
	Let $X$ be a smooth cubic threefold with an involution $\tau$ of non-Eckardt type, cut out by Equation \eqref{eqn:X}. Let $l\subset X$ be a $\tau$-invariant line. Then either $l$ is pointwise fixed by $\tau$ (i.e. $l=L=V(x_0,x_1,x_2)$), or $l$ intersects both the fixed line $L$ and the fixed curve $C=V(g(x_0,x_1,x_2), x_3, x_4) \subset X$.
\end{lemma}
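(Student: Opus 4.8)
The plan is to reduce the statement to elementary linear algebra on the affine cone, using the eigenspace decomposition of $\tau$. Lift $\tau$ to a linear involution of $\bC^5$ and write $V_+=\langle e_0,e_1,e_2\rangle$ and $V_-=\langle e_3,e_4\rangle$ for its $(+1)$- and $(-1)$-eigenspaces, so that $\Pi=\bP(V_+)$ and $L=\bP(V_-)$. If $l\subset X$ is a $\tau$-invariant line, let $W\subset\bC^5$ be the corresponding $2$-dimensional linear subspace (the cone over $l$). Invariance of $l$ means precisely that $W$ is $\tau$-stable, and since $\tau$ is an involution of a $\bC$-vector space it is diagonalizable, so $W=(W\cap V_+)\oplus(W\cap V_-)$.

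First I would enumerate the possibilities for the pair $\bigl(\dim(W\cap V_+),\dim(W\cap V_-)\bigr)$: since these sum to $\dim W=2$, the only options are $(2,0)$, $(0,2)$, and $(1,1)$. The case $(2,0)$ means $l\subset\Pi$; then $l\subset X\cap\Pi=C$, contradicting the fact that $C$ is a smooth plane cubic (Lemma \ref{Csmooth}), hence irreducible of degree $3$ and containing no line. (Equivalently, $\Pi\not\subset X$, so $X\cap\Pi$ is a genuine cubic curve.) The case $(0,2)$ forces $W=V_-$ by dimension count, so $l=\bP(V_-)=L$ is the pointwise fixed line.

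In the remaining case $(1,1)$ the two points $\bP(W\cap V_+)$ and $\bP(W\cap V_-)$ are distinct (as $V_+\cap V_-=0$) and lie on $l$: the first lies in $l\cap\Pi$ and the second in $l\cap L$. Since $l\subset X$, the first point lies on $X\cap\Pi=C$. Hence $l$ meets both $L$ and $C$, which is the assertion.

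I do not expect a real obstacle here: the argument is essentially bookkeeping with the three cases once the eigenspace viewpoint is adopted. The only input beyond linear algebra is the exclusion of $l\subset\Pi$, which uses that a smooth plane cubic contains no line (Lemma \ref{Csmooth}).
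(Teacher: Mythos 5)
Your proof is correct and is essentially the paper's argument, recast as the eigenspace decomposition $W=(W\cap V_+)\oplus(W\cap V_-)$ of the cone over $l$ instead of a fixed-point analysis of $\tau|_l$: the three dimension cases $(2,0),(0,2),(1,1)$ correspond exactly to the paper's dichotomy (pointwise fixed, giving $l\subset\Pi$ or $l=L$, versus exactly two fixed points), with the exclusion of $l\subset\Pi$ again resting on the smoothness of $C$ (Lemma \ref{Csmooth}). If anything, your $(1,1)$ case makes the fact that $l$ actually meets $L$ explicit, a point the paper's wording leaves slightly implicit.
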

\begin{proof}
	We use the notation in \S\ref{geometry}. The fixed locus of the involution $\tau$ acting on $\bP^4$ consists of the line $L$ and the plane $\Pi=V(x_3,x_4)$. If $\tau$ fixes every point of $l\subset C$, then either $l=L$, or $l$ is a component of $C$. By Lemma \ref{Csmooth}, the curve $C$ is smooth, and so $l=L$. 
	Otherwise, $\tau$ fixes two points of $l\subset X$. One of the points needs to be off of the fixed line $L$, and hence must be a point of $C$. Thus $l$ intersects both $L$ and $C$.
\end{proof}
 
 	Observe that the plane $\langle L,l\rangle$ spanned by the pointwise fixed line $L$ and an invariant line $l\neq L$ is itself $\tau$-invariant and therefore must intersect $X$ along a third invariant line $l'$. Through projecting $X$ from the pointwise fixed line $L\subset X$, we will see in \S\ref{projX1} (and also the proof of Proposition \ref{prymdecomp1}) that the $\tau$-invariant lines $l\subset X$ which are not pointwise fixed are parameterized by a smooth genus $4$ curve $\widetilde{C}$ which is a double cover of $C$. In other words, the fixed locus of $\tau$ on the Fano surface $F(X)$ of lines consists of a point corresponding to the pointwise fixed line $L$ and the other curve $\widetilde{C}$ parameterizing other $\tau$-invariant lines $l$: $F(X)^\tau = \{L\} \cup \widetilde{C}$.

\section{Cubic threefolds with an involution of non-Eckardt type as fibrations in conics I: pointwise fixed line} \label{conic fib 1}
	In this section, we study the intermediate Jacobians $JX$ of cubic threefolds $(X,\tau)$ with an involution of non-Eckardt type via projections from the pointwise fixed lines $L\subset X$. Some basic facts about cubic threefolds as fibrations in conics are first recalled in \S\ref{general conic fib}. We then focus on cubic threefolds with a non-Eckardt type involution and study the fibrations in conics obtained by projecting these cubic threefolds from pointwise fixed lines in \S\ref{projX1}. An important observation is that the discriminant quintic curves split as the union of smooth cubic curves and transverse quadratic curves. Based on the observation and the results in \cite[\S0.3]{MR472843}, we give a characterization of the invariant and anti-invariant parts of the intermediate Jacobians in \S\ref{IJ as prym 1}.
	
	\subsection{Cubic threefolds as fibrations in conics} \label{general conic fib} Let $X\subset \bP^4$  be a smooth cubic threefold with a line $l\subset X$. The linear projection with center $l$ expresses $X$ as a conic fibration over a complementary plane $\bP^2$; indeed, $\bP^2$ also parametrizes the space of $\bP^2$-sections of $X$ containing $l$. The blow up $\mathrm{Bl}_l\bP^4$ of the ambient projective space along $l$ gives a commutative diagram
\[ \begin{tikzcd}
	\mathrm{Bl}_lX \arrow[hook]{r} \arrow[swap]{dr}{\pi_l} & \mathrm{Bl}_l\bP^4 \arrow{d} \\%
	& \bP^2
\end{tikzcd}\]
where $\mathrm{Bl}_lX$ is the strict transform of $X$ in $\mathrm{Bl}_l\bP^4$, yielding a fibration in conics $\pi_l:\mathrm{Bl}_lX \rightarrow \bP^2$. The discriminant curve is a plane quintic $D\subset \bP^2$ which by \cite[Prop.~ 1.2]{MR472843} is stable, and there is an associated pseudo-double cover\footnote{Let $\widetilde{D}\rightarrow D$ be a double cover of stable curves with the associated covering involution $\iota$. We say that $\widetilde{D}\rightarrow D$ is \emph{admissible} if the fixed points of $\iota$ are nodes, and the local branches are not interchanged by $\iota$ at each fixed node of $\widetilde{D}$. An admissible double cover $\widetilde{D}\rightarrow D$ is called \emph{allowable} (see \cite[(**), p.~173]{MR572974} and \cite[\S I.1.3]{MR594627}) if the associated Prym is compact. An allowable double cover $\widetilde{D}\rightarrow D$ is said to be a \emph{pseudo-double cover} (cf.~ \cite[(*), p.~157]{MR572974} and \cite[Def.~ 0.3.1]{MR472843}) if the fixed points of $\iota$ are exactly the nodes of $\widetilde{D}$.} $\pi:\widetilde{D}\rightarrow D$ determined by interchanging the lines in the fiber of $\pi_l$ over the points of $D$ (cf.~\cite[Prop.~ 1.5]{MR472843}). For a projection from a general line $l \subset X$, $D$ is smooth and $\widetilde{D}\rightarrow D$ is connected and \'etale (see for example \cite[Appendix C]{MR302652}). 

	Associated with the discriminant double cover $\pi:\widetilde{D}\rightarrow D$ is a rank-$1$ torsion-free sheaf $\eta_D$ which is reflexive, i.e.~ $\calH om(\eta_D, \calO_D) \cong \eta_D$ (more precisely, $\widetilde{D}\rightarrow D$ is constructed as $\underline{\Spec}_D(\calO_D\oplus \eta_D) \rightarrow D$ where the $\calO_D$-algebra structure on $\calO_D\oplus \eta_D$ is induced by $\calH om(\eta_D, \calO_D) \cong \eta_D$). Let $\kappa_D:=\eta_D\otimes \calO_D(1)$. Then $\kappa_D$ is an odd theta characteristic satisfying $\calH om(\kappa_D, \omega_D) \cong \kappa_D$ and $h^0(D, \kappa_D)=1$. Note also that $\kappa_D$ (and therefore $\eta_D$) is locally free at a point $d\in D$ if and only if $\widetilde{D}\rightarrow D$ is \'etale over $d$. By \cite[\S1.6]{MR472843} and \cite[Prop.~ 4.2]{MR2123232}, the conic fibration construction gives a one-to-one correspondence between pairs $(X, l)$ consisting of a smooth cubic threefold $X$ and a line $l \subset X$ and pairs $(D, \kappa_D)$ where $D$ is a stable plane quintic curve and $\kappa_D$ is a theta characteristic with $h^0(D, \kappa_D)=1$, both up to projective linear transformations. 

The above construction can also be described in coordinates. We may assume that $l \subset \bP^4$ is cut out by $x_0=x_1=x_2=0$. Since $l \subset X$, the equation of $X$ is of the form
\begin{align*}
	&\ell_1(x_0,x_1,x_2)x_3^2+\ell_2(x_0,x_1,x_2)x_4^2+2\ell_3(x_0,x_1,x_2)x_3x_4 \\
	&+2q_1(x_0,x_1,x_2)x_3+2q_2(x_0,x_1,x_2)x_4+c(x_0,x_1,x_2)=0
\end{align*}
where $\ell_i$, $q_j$ and $c$ are homogeneous polynomials of degree $1$, $2$ and $3$ respectively. 
Let $M$ be the matrix 
\begin{equation}\label{E:Mdef}
M=\left(\begin{matrix}
	\ell_1(x_0,x_1,x_2) & \ell_3(x_0,x_1,x_2) & q_1(x_0,x_1,x_2) \\
	\ell_3(x_0,x_1,x_2) & \ell_2(x_0,x_1,x_2) & q_2(x_0,x_1,x_2) \\
	q_1(x_0,x_1,x_2) & q_2(x_0,x_1,x_2) &  c(x_0,x_1,x_2)
\end{matrix}\right).
\end{equation}
Then the discriminant quintic curve $D \subset V(x_3,x_4)\cong \bP^2$ for the conic fibration $\pi_l:\mathrm{Bl}_lX\rightarrow \bP^2$ is cut out by the discriminant of $M$: $D=V(\det(M))$. In particular, a point $d\in D$ is a smooth point if and only if the corank of $M$ at $d$ is $1$ (note that because $X$ is smooth the corank of $M$ is at most $2$). Moreover, following \cite[Prop.~ 4.2]{MR1786479} and \cite[Thm.~ 4.1]{MR2123232} the theta characteristic $\kappa_D$ admits a short exact sequence
	\[0\rightarrow \calO_{\bP^2}(-2)^{\oplus2}\oplus \calO_{\bP^2}(-3)\stackrel{M}{\rightarrow} \calO_{\bP^2}(-1)^{\oplus2}\oplus\calO_{\bP^2}\rightarrow \kappa_D\rightarrow 0.\]	
(Indeed, the smooth cubic threefold $X$ and the line $l \subset X$ can be recovered from the above minimal resolution of $\kappa_D$ up to projective linear transformations, cf.~\cite[Prop.~ 4.2]{MR2123232}.) When the plane discriminant quintic $D$ is smooth and the discriminant double cover $\pi:\widetilde{D}\rightarrow D$ is connected and \'etale, the theta characteristic $\kappa_D$ corresponds to the divisor $\sqrt{(\ell_1\ell_2-\ell_3^2=0)}$ (which is the unique effective divisor such that twice of the divisor is the divisor $(\ell_1\ell_2-\ell_3^2=0)$ on $D$), and the \'etale double cover $\widetilde{D}\rightarrow D$ is associated with the $2$-torsion line bundle $\eta_D=\kappa_D(-1)$.

	Denote the Prym variety of the discriminant pseudo-double cover $\pi:\widetilde{D}\rightarrow D$ by	$P(\widetilde{D}, D)$ which is defined as
	\[P(\widetilde{D}, D):=(\ker(\mathrm{Nm}_\pi:J(\widetilde{D})\rightarrow J(D)))^0\]	
(cf.~\cite[\S3]{MR0379510} and \cite[\S3]{MR572974}). For later use, let us give an explicit description of $P(\widetilde{D},D)$ following \cite[\S0.3]{MR472843}. Set $\tilde{\nu}:N(\widetilde{D})\rightarrow \widetilde{D}$ (respectively, $\nu:N(D)\rightarrow D$) to be the normalization of $\widetilde{D}$ (respectively, $D$). Denote by $\pi':N(\widetilde{D})\rightarrow N(D)$ the induced double cover. By \cite[Prop.~ 3.5]{MR572974}, there exists an isogeny $\tilde{\nu}^*:P(\widetilde{D},D)\rightarrow P(N(\widetilde{D}),N(D))$. More precisely, denote by $\Theta_{J(N(\widetilde{D}))}$ the principal polarization on $J(N(\widetilde{D}))$ and consider the restriction of $\Theta_{J(N(\widetilde{D}))}$ to $P(N(\widetilde{D}),N(D))$:
	\[\Theta_{P(N(\widetilde{D}),N(D))}:=\Theta_{J(N(\widetilde{D}))}|_{P(N(\widetilde{D}),N(D))}.\] 
By \cite[Thm.~ 3.7]{MR572974}, $\Theta_{P(N(\widetilde{D}),N(D))}$	 induces twice of a principal polarization $\Xi$ on $P(\widetilde{D},D)$: 
	\[(\tilde{\nu}^*)^{-1}\Theta_{P(N(\widetilde{D}),N(D))}\equiv_{\mathrm{alg}} 2\Xi.\] 
In other words, the isogeny above is an isogeny of polarized abelian varieties: 
	\[\tilde{\nu}^*:(P(\widetilde{D},D),2\Xi)\rightarrow (P(N(\widetilde{D}),N(D)),\Theta_{P(N(\widetilde{D}),N(D))}).\] 
From \cite[Prop.~12.1.3]{MR2062673}, we deduce that 
\[(P(N(\widetilde{D}),N(D))^\vee,(\Theta_{P(N(\widetilde{D}),N(D))})^\vee)\cong (J(N(\widetilde{D}))/\pi'^*J(N(D)),\Theta')\] where $\Theta'$ denotes the dual polarization\footnote{Here we are using the dual polarization defined in \cite[Thm.~2.1]{MR1976843} (see also \cite[Rmk.~1.14]{MR4313238}) which is slightly different from the one used in \cite[\S14.4]{MR2062673}. In particular, for a polarization of type $(d_1,d_2,\dots,d_g)$ the dual polarization has type $(\frac{d_g}{d_g},\frac{d_g}{d_{g-1}},\dots,\frac{d_g}{d_1})$.}. As a result, we get the dual isogeny of polarized abelian varieties
	\[(\tilde{\nu}^*)^\vee:(J(N(\widetilde{D}))/\pi'^*J(D),\Theta')\rightarrow (P(\widetilde{D},D)^\vee,(2\Xi)^\vee)\cong (P(\widetilde{D},D),\Xi).\]
For a pseudo-double cover, the kernel of $(\tilde{\nu}^*)^\vee$ has been described in \cite[\S0.3]{MR472843} (see also \cite[p.~76]{MR1944808}). Specifically, let $H'\subset \Pic(N(\widetilde{D}))$ be the subgroup generated by $\calO_{N(\widetilde{D})}(s-s')$ where $s,s'\in N(\widetilde{D})$ with $\tilde{\nu}(s)=\tilde{\nu}(s')$. Set $H$ to be the image of $H_0:=H'\cap J(N(\widetilde{D}))$ in the quotient $J(N(\widetilde{D}))/\pi'^*J(N(D))$. Then $H$ is the kernel of the isogeny of polarized abelian varieties
	\[0\rightarrow H\rightarrow J(N(\widetilde{D}))/\pi'^*J(D)\stackrel{(\tilde{\nu}^*)^\vee}{\rightarrow} P(\widetilde{D},D)\rightarrow 0.\]
	
	By \cite[Appendix C]{MR302652} and \cite[Thm.~ 2.1]{MR472843}, the conic fibration construction $\pi_l:\mathrm{Bl}_lX \rightarrow \bP^2$ induces a canonical isomorphism of principally polarized abelian varieties \[JX \cong P(\widetilde{D}, D)\] between the intermediate Jacobian $JX$ of the smooth cubic threefold $X$ and the Prym variety $P(\widetilde{D}, D)$ of the discriminant double cover $\widetilde{D}\rightarrow D$.

	\subsection{Projecting cubic threefolds with a non-Eckardt type involution from the pointwise fixed lines} \label{projX1} Let $(X,\tau)$ be a smooth cubic threefold with a non-Eckardt type involution as in \S\ref{geometry}. Let $L\subset X$ be the pointwise fixed line under $\tau$ (see Lemma \ref{inv lines}). We can rewrite the equation of $X$ in Equation \eqref{eqn:X} as 
\begin{equation} \label{rewrite eqn}
	\ell_1(x_0,x_1,x_2)x_3^2+\ell_2(x_0,x_1,x_2)x_4^2+2\ell_3(x_0,x_1,x_2)x_3x_4+g(x_0,x_1,x_2)=0,
\end{equation}
where $\ell_i(x_0, x_1, x_2)$ are linear forms, and $g(x_0, x_1, x_2)$ is homogeneous of degree $3$. As in \S\ref{general conic fib} we project $X$ from the fixed line $L=V(x_0, x_1, x_2)$ to the complementary plane $\Pi:=V(x_3, x_4) \cong \bP^2_{x_0,x_1,x_2}$, and obtain a fibration in conics $\pi_L:\mathrm{Bl}_LX\rightarrow \Pi$. 
The plane discriminant quintic $D_L\subset \Pi$ has equation $\det M=0$ where the matrix $M$ is 
\begin{equation} \label{M}
M=\left(\begin{matrix}
	\ell_1(x_0,x_1,x_2) & \ell_3(x_0,x_1,x_2) & 0 \\
	\ell_3(x_0,x_1,x_2) & \ell_2(x_0,x_1,x_2) & 0 \\
	0 & 0&  g(x_0,x_1,x_2)
\end{matrix}\right).
\end{equation}
We are ready to see that $D_L$ is nodal and consists of the following components: a smooth plane cubic 
	\[C:=V(g(x_0,x_1,x_2), x_3, x_4) \subset \Pi,\] 
and a (possibly degenerate) plane conic 
	\[Q:=V(\ell_1(x_0,x_1,x_2)\ell_2(x_0,x_1,x_2)-\ell_3^2(x_0,x_1,x_2), x_3, x_4) \subset \Pi.\]
Note that $C=X\cap \Pi$ is the pointwise fixed curve by the involution $\tau$ in \S\ref{geometry}.

\begin{lemma} \label{CQsmooth}
	Let $(X,\tau)$ be a smooth cubic threefold with a non-Eckardt type involution as above, and let $\pi_L:\mathrm{Bl}_LX\rightarrow \Pi$ be the projection from the fixed line $L\subset X$. Then the discriminant curve $D_L$ is a union of a cubic curve $C$ and a conic curve $Q$. Moreover, the cubic component $C$ is smooth, and $C$ meets the conic component $Q$ transversely.
\end{lemma}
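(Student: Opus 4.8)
The plan is to prove the three assertions in turn: that $D_L = C \cup Q$, that $C$ is smooth, and that $C$ and $Q$ meet transversely. The first assertion is immediate from the block form of the matrix $M$ in Equation \eqref{M}: since $M$ is block-diagonal with a $2\times 2$ block in the linear forms $\ell_i$ and a $1\times 1$ block $g(x_0,x_1,x_2)$, we have $\det M = g(x_0,x_1,x_2)\bigl(\ell_1\ell_2 - \ell_3^2\bigr)$, so $D_L = V(\det M)$ is the union of the cubic $C = V(g, x_3, x_4)$ and the conic $Q = V(\ell_1\ell_2 - \ell_3^2, x_3, x_4)$. The smoothness of $C$ is exactly Lemma \ref{Csmooth} (or can be re-derived: a singular point of $C$ would, after taking partials of Equation \eqref{rewrite eqn}, force $X$ to be singular along $L$).

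The substantive point is the transversality of $C \cap Q$. I would argue by contradiction: suppose $C$ and $Q$ are tangent at a point $a = [a_0, a_1, a_2] \in \Pi$, i.e. $a \in C \cap Q$ and the tangent lines to $C$ and $Q$ at $a$ coincide (if $Q$ is reducible or $a$ is a node of $Q$ one interprets this appropriately — but I expect to rule those degenerate configurations out along the way, or handle them by the same singularity-of-$X$ computation). Then I translate this tangency into the statement that the point $[a_0, a_1, a_2, 0, 0]$ (which lies on $L \subset X$, indeed it lies on $C$) is a singular point of the total space $X$, or more precisely of the conic fibration, contradicting smoothness of $X$. Concretely, at a point $d \in D_L$ the conic fiber degenerates, and a standard fact (used in \S\ref{general conic fib}: $d$ is a smooth point of $D_L$ iff $\mathrm{corank}\, M(d) = 1$, and $X$ smooth forces $\mathrm{corank} \le 2$) says the discriminant curve $D_L$ can only be singular where $M$ drops rank by $2$. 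At a transverse intersection point of $C$ and $Q$ the curve $D_L$ has a node, which is consistent with $\mathrm{corank}\, M = 2$ there; but a tangential intersection produces a worse singularity of $D_L$ (a tacnode or higher), and I must show this is incompatible with $X$ being a smooth cubic threefold.

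The cleanest route is the coordinate computation: write down the partial derivatives of $F$ as in Equation \eqref{rewrite eqn} at the point $p = [a_0, a_1, a_2, 0, 0]$. The partials $\partial F/\partial x_i$ for $i = 0,1,2$ all contain $g$-terms and $\ell_i$-terms, but every $\ell_i$-term carries a factor of $x_3$ or $x_4$, hence vanishes at $p$; so $\partial F/\partial x_i(p) = \partial g/\partial x_i(a)$, and these vanish because $a$ is a point where... wait, $a$ need not be singular on $C$. Instead I should use $\partial F/\partial x_3(p) = 2\ell_1(a)\cdot 0 + 2\ell_3(a)\cdot 0 = 0$ and similarly $\partial F/\partial x_4(p) = 0$ automatically, and then the condition that $p$ is \emph{not} singular on $X$ forces some $\partial g/\partial x_i(a) \ne 0$, i.e. $C$ is smooth at $a$ — which we already knew. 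So smoothness of $X$ alone does not immediately give transversality; one needs to look at the second-order behavior. I would instead consider the $\bP^1$-family of $2$-planes $\langle L, \ell\rangle$ through $L$: the fiber of $\pi_L$ over $a$ is the residual conic in the plane section $X \cap \langle L, a\rangle$, which is $\{\ell_1(a)x_3^2 + 2\ell_3(a)x_3x_4 + \ell_2(a)x_4^2 = 0\}$ in the plane $\langle L, a\rangle \cong \bP^2_{x_3, x_4, (\cdot)}$ together with the parameter; over $Q$ this conic is a double line, over $C \setminus Q$ it is a rank-$3$ conic = pair of distinct lines. The key computation: smoothness of $X$ at each point of $L$ translates into the statement that the map $a \mapsto (\ell_1(a) : \ell_3(a) : \ell_2(a))$ from $\Pi \cong \bP^2$ to $\bP^2$ is a \emph{well-defined} morphism whose image does not meet the discriminant conic $\{\beta_1\beta_3\beta_2\,:\,\beta_1\beta_2 = \beta_3^2\}$... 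I expect this linear map to be an isomorphism of $\bP^2$'s (no common zero of $\ell_1, \ell_2, \ell_3$, else $X$ singular along a point of $L$; and three independent linear forms), under which $Q$ is the pullback of the smooth conic $\{\beta_1\beta_2 = \beta_3^2\}$ and $C$ maps to a smooth cubic; then $C \cap Q$ transverse reduces to showing the pulled-back cubic meets the smooth conic $\{\beta_1\beta_2 = \beta_3^2\}$ transversely, which I would again force from smoothness of $X$ by examining whether a tangency point produces a singular point on $\mathrm{Bl}_L X$ (the total space of the conic bundle).

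The main obstacle, then, is the last step: rigorously showing that a tangential intersection $C \cap Q$ produces a singularity of the smooth cubic threefold $X$ (equivalently of $\mathrm{Bl}_L X$). I would handle it by the explicit Jacobian criterion on $\mathrm{Bl}_L X$ in the chart over a neighborhood of the tangency point: parametrize $D_L$ locally, compute the rank of the Jacobian of the defining equation of $\mathrm{Bl}_L X$ at the point of the fiber where the two lines of the degenerate conic come together, and check that tangency of $C$ and $Q$ exactly kills one more partial derivative, yielding a singular point — contradiction. This is the heart of the argument; the rest is bookkeeping. I would also note as a sanity check that this is consistent with the count in \S\ref{projX1} that $D_L$ is \emph{nodal}, and with \cite[Prop.~1.2]{MR472843} which guarantees the discriminant curve is stable, so tacnodal configurations are already excluded on general grounds — but giving the direct argument via smoothness of $X$ is more self-contained and is what I would write out.
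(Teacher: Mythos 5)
Your decomposition $D_L=C\cup Q$ via the block form of $M$ and your treatment of the smoothness of $C$ coincide with the paper's (the latter is exactly Lemma \ref{Csmooth}). For the transversality, however, note that the remark you relegate to a ``sanity check'' at the very end is the paper's \emph{entire} proof: by \cite[Prop.~1.2]{MR472843} the discriminant curve of the conic bundle $\pi_L:\mathrm{Bl}_LX\rightarrow\Pi$ with smooth total space is at worst nodal, and a non-transverse intersection point of the smooth cubic $C$ with $Q$ would be a worse-than-nodal (in particular non-ordinary-double) point of $D_L=C\cup Q$. That two-line citation argument is complete as it stands.

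The self-contained route you say you would actually write out has a genuine gap: the assertion that a tangency of $C$ and $Q$ forces a singular point of $\mathrm{Bl}_LX$ is precisely the local analysis underlying \cite[Prop.~1.2]{MR472843} (at a corank-$2$ point the fibre is a double line, all fibre-direction partials vanish along it, and smoothness of the total space is equivalent to the differentials of the two local branches of the discriminant being independent, i.e.\ to an ordinary node), and you never carry this out -- you only announce that ``tangency kills one more partial derivative.'' As written, the heart of your argument is a promissory note rather than a proof. In addition, one intermediate claim in your exploration is false: $\ell_1,\ell_2,\ell_3$ may have a common zero (equivalently, the map $a\mapsto[\ell_1(a):\ell_3(a):\ell_2(a)]$ need not be a well-defined isomorphism of planes) while $X$ is smooth; for instance $\ell_3\equiv 0$ gives smooth members of $\calM\setminus\calM_0$, where $Q$ is a singular conic -- a possibility the paper explicitly allows and uses in \S\ref{inftorellinonEc} and Remark \ref{M0hyperelliptic}. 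What smoothness of $X$ actually forbids is the simultaneous vanishing of $\ell_1,\ell_2,\ell_3$ \emph{and} $g$ (corank $3$, i.e.\ a plane contained in $X$); so reducible $Q$ cannot be ``ruled out along the way,'' and only a singular point of $Q$ lying on $C$ is excluded. If you want the direct argument, you must do the corank-$2$ Jacobian computation in full; otherwise, cite \cite[Prop.~1.2]{MR472843} as the paper does.
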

\begin{proof}
	The claim that $C$ is smooth has been verified in Lemma \ref{Csmooth}.  It is shown in \cite[Prop.~1.2]{MR472843} that $D_L$ is at worst nodal, and therefore $C$ and $Q$ meet transversally. 
\end{proof}

	Denote by $\pi_L: \widetilde{D}_L  \rightarrow D_L=C\cup Q$ the discriminant double cover of the fibration in conics $\pi_L:\mathrm{Bl}_LX\rightarrow \Pi$. Note that $\pi_L:\widetilde{D}_L \rightarrow D_L$ is branched at the intersection points $C\cap Q$. Let us also observe that $\widetilde{D}_L=\widetilde{C}\cup \widetilde{Q}$ where $\widetilde{C}$ (respectively, $\widetilde{Q}$) is a double cover of the smooth cubic $C$ (respectively, the conic $Q$) ramified in the intersection points $C\cap Q$. We now restrict the discriminant double cover $\pi_L: \widetilde{D}_L=\widetilde{C}\cup \widetilde{Q}\rightarrow D_L=C\cup Q$ to $C$ and focus on the obtained double cover $\pi_L|_{\widetilde{C}}:\widetilde{C} \rightarrow C$ (if no confusion is likely to be caused we will simply write $\pi$ instead of $\pi_L|_{\widetilde{C}}$). Specifically, we describe the quadruple $(C,\beta,\calL,s)$ corresponding to $\pi: \widetilde{C}\rightarrow C$ where $\beta$ is the branch divisor, $\calL$ is the associated line bundle on $C$ satisfying $\calL^{\otimes2}\cong \calO_C(\beta)$ and $s$ is a section of $\calO_C(\beta)$ vanishing on $\beta$. For $\pi: \widetilde{C}\rightarrow C$, clearly one has $\beta=C\cap Q$ and $s=\ell_1(x_0,x_1,x_2)\ell_2(x_0,x_1,x_2)-\ell_3^2(x_0,x_1,x_2)$. It remains to determine the associated line bundle $\calL$.
	
\begin{proposition} \label{thetachar}
	The double cover $\pi: \widetilde{C}\rightarrow C$ obtained by restricting the discriminant double cover $\pi_L: \widetilde{D}_L \rightarrow D_L$ to the smooth cubic component $C$ is associated with the line bundle $\calL=\calO_C(1)$.
\end{proposition}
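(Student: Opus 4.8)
The plan is to make the double cover $\pi\colon\widetilde C\to C$ completely explicit, realizing $\widetilde C$ as a divisor in a $\bP^1$-bundle over $C$, and then to read off $\calL$ from a direct computation of $\pi_*\calO_{\widetilde C}$. First I would unwind the conic fibration $\pi_L$ over the cubic component. Fix $[a]=[a_0:a_1:a_2]\in C$, so that $g(a)=0$. By Equation \eqref{rewrite eqn} the fibre of $\pi_L$ over $[a]$, once the pointwise fixed line $L$ is removed, is the conic
\[\ell_1(a)x_3^2+2\ell_3(a)x_3x_4+\ell_2(a)x_4^2=0\]
in the plane $\langle L,[a]\rangle$, i.e. the union of the two lines joining $[a]$ to the points $[0:0:0:\mu:\nu]\in L$ with $\ell_1(a)\mu^2+2\ell_3(a)\mu\nu+\ell_2(a)\nu^2=0$; equivalently, substituting the parametrization of such a line into \eqref{rewrite eqn} and using $g(a)=0$ shows the line lies on $X$ exactly under this quadratic condition. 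Identifying $L=V(x_0,x_1,x_2)$ with $\bP^1$ via the coordinates $[x_3:x_4]$ and sending a line $l$ over $C$ to the pair $(l\cap C,\,l\cap L)$, I would conclude that $\widetilde C$ is the zero scheme of the bihomogeneous form $\ell_1x_3^2+2\ell_3x_3x_4+\ell_2x_4^2$, a section of $\calO_C(1)\boxtimes\calO_{\bP^1}(2)$ on $C\times\bP^1$. This section is not identically zero, since $\ell_1,\ell_2,\ell_3$ cannot vanish simultaneously at a point of $C$ — otherwise the whole plane $\langle L,[a]\rangle$ would be contained in the smooth cubic threefold $X$. Thus $\pi\colon\widetilde C\to C$ is finite flat of degree $2$, branched along the vanishing of $\ell_1\ell_2-\ell_3^2$, i.e. along $\beta=C\cap Q$, in agreement with the data $\beta$, $s$ already recorded.

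Next I would compute the pushforward. With $p\colon C\times\bP^1\to C$ the projection and $\calO_{C\times\bP^1}(-\widetilde C)\cong\calO_C(-1)\boxtimes\calO_{\bP^1}(-2)$, applying $Rp_*$ to
\[0\longrightarrow\calO_{C\times\bP^1}(-\widetilde C)\longrightarrow\calO_{C\times\bP^1}\longrightarrow\calO_{\widetilde C}\longrightarrow 0\]
and using $H^0(\bP^1,\calO(-2))=0$, $H^1(\bP^1,\calO(-2))\cong\bC$ and $H^1(\bP^1,\calO)=0$ gives $p_*\calO_{C\times\bP^1}(-\widetilde C)=0$ and $R^1p_*\calO_{C\times\bP^1}(-\widetilde C)\cong\calO_C(-1)$, hence
\[0\longrightarrow\calO_C\longrightarrow\pi_*\calO_{\widetilde C}\longrightarrow\calO_C(-1)\longrightarrow 0 .\]
Since $\pi_*\calO_{\widetilde C}\cong\calO_C\oplus\calL^{-1}$ by the defining property of the line bundle associated to a double cover, this forces $\calL^{-1}\cong\calO_C(-1)$, that is $\calL\cong\calO_C(1)$, which is the claim. (The sequence in fact splits, as $H^1(C,\calO_C(1))=0$.)

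The step requiring the most care is the scheme-theoretic identification $\widetilde C=V(\ell_1x_3^2+2\ell_3x_3x_4+\ell_2x_4^2)\subset C\times\bP^1$, i.e. ruling out a nilpotent thickening along the branch locus $C\cap Q$. I would settle this either via the block-diagonal shape of the matrix $M$ in \eqref{M}, which exhibits $\kappa_{D_L}=\coker(M)$ as a direct sum whose summand supported on $C$ is $\coker\big(\calO_{\bP^2}(-3)\xrightarrow{\,g\,}\calO_{\bP^2}\big)\cong\calO_C$, so that $\eta_{D_L}=\kappa_{D_L}(-1)$ contributes $\calO_C(-1)$ along $C$ and $\widetilde C=\underline{\Spec}_C(\calO_C\oplus\calO_C(-1))$ with algebra structure given by $s=\ell_1\ell_2-\ell_3^2$; or, more geometrically, by observing that both $\widetilde C$ and the divisor above are finite flat degree-$2$ covers of the smooth curve $C$, agree over the étale locus $C\setminus(C\cap Q)$, and are smooth — the divisor because $C$ meets $Q$ transversally (Lemma \ref{CQsmooth}) — and hence coincide. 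Granting this, everything else is routine: a Bertini-type nonvanishing and a standard cohomology computation on $C\times\bP^1$.
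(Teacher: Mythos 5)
Your proof is correct, and it takes a genuinely different route from the paper's. The paper works sheaf-theoretically: it restricts Beauville's minimal resolution of the theta characteristic $\kappa_{D_L}=\coker(M)$ to the component $C$, identifies $(\kappa_{D_L}|_C)/\mathrm{torsion}\cong\calO_C$ from the block shape of $M$, twists to get $(\eta_{D_L}|_C)/\mathrm{torsion}\cong\calO_C(-1)$, and then relates $\pi_*\calO_{\widetilde C}$ to $(\pi_{L*}\calO_{\widetilde D_L}|_C)/\mathrm{torsion}$ by a local computation at the nodes. You instead build an explicit model: a line in the fiber of $\pi_L$ over $[a]\in C$ is recorded by its intersection with $L\cong\bP^1$, which realizes $\widetilde C$ as the $(1,2)$-divisor $V(\ell_1x_3^2+2\ell_3x_3x_4+\ell_2x_4^2)\subset C\times\bP^1$, after which $\pi_*\calO_{\widetilde C}$ falls out of a standard pushforward of the ideal-sheaf sequence. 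The one point that genuinely needs care --- that this divisor coincides with the component $\widetilde C$ of the pseudo-double cover scheme-theoretically, not just over the \'etale locus --- you handle correctly: both are finite degree-$2$ covers of the smooth curve $C$ agreeing over $C\setminus(C\cap Q)$, and both are smooth (the divisor because the discriminant $\ell_1\ell_2-\ell_3^2$ vanishes to order one along $C$ by the transversality in Lemma \ref{CQsmooth}), so both are the normalization of $C$ in the same quadratic extension; your alternative route via the block-diagonal $M$ is essentially the paper's argument. Your nonvanishing claim (that $\ell_1,\ell_2,\ell_3$ have no common zero on $C$, since otherwise $\langle L,[a]\rangle\subset X$) is also right and is what makes the divisor flat over $C$. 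What your approach buys is a self-contained, elementary computation that avoids the torsion bookkeeping for $\eta_{D_L}$ at the nodes and makes the ramification data visible; what the paper's approach buys is uniformity with the general framework of \cite{MR472843} and \cite{MR2123232}, which the reconstruction argument in \S\ref{reconstructX} reuses. A small remark: you can avoid even the splitting discussion at the end by comparing determinants in the exact sequence $0\to\calO_C\to\pi_*\calO_{\widetilde C}\to\calO_C(-1)\to 0$, since $\det\pi_*\calO_{\widetilde C}\cong\calL^{-1}$.
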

\begin{proof}
	We may assume that the conic $Q$ is smooth (note that this is the case for a general cubic threefold with a non-Eckardt type involution; a similar argument applies to the case when $Q$ has rank $2$). As in \S\ref{general conic fib}, set $\eta_{D_L}$ to be the rank-$1$ torsion-free sheaf which is associated with the discriminant double cover $\widetilde{D}_L \rightarrow D_L$ and satisfies $\calH om(\eta_{D_L}, \calO_D)\cong \eta_{D_L}$. Let $\kappa_{D_L}=\eta_{D_L}\otimes \calO_{D_L}(1)$ be the theta characteristic on $D_L$. By a result of Beauville (see \cite[Prop.~ 4.2]{MR1786479} and \cite[Thm.~ 4.1]{MR2123232}), there exists a short exact sequence 
	\[0\rightarrow \calO_{\bP^2}(-2)^{\oplus 2}\oplus \calO_{\bP^2}(-3)\stackrel{M}{\rightarrow} \calO_{\bP^2}(-1)^{\oplus 2}\oplus \calO_{\bP^2}\rightarrow \kappa_{D_L}\rightarrow 0\]
where $\bP^2$ denotes the plane $\Pi=V(x_3, x_4)\cong \bP^2_{x_0,x_1,x_2}$ and $M$ is the matrix in Equation \eqref{M}. Restricting the above exact sequence to the smooth cubic component $C$, one gets the following sequence which coincides with the closed subscheme sequence for $C=V(g)\subset \bP^2$:
	\[0\rightarrow \calO_{\bP^2}(-3)\stackrel{\cdot g}{\rightarrow} \calO_{\bP^2}\rightarrow (\kappa_{D_L}|_C)/\mathrm{torsion}\rightarrow 0.\]
As a consequence, we get $(\kappa_{D_L}|_C)/\mathrm{torsion}\cong \calO_C$. Similarly, from \cite[Prop.~ 4.2]{MR1786479} one deduces that $(\kappa_{D_L}|_Q)/\mathrm{torsion}$ is $\theta_Q:=\calO_Q(-pt)$. Since $\eta_{D_L}=\kappa_{D_L}\otimes \calO_{D_L}(-1)$, we have that $(\eta_{D_L}|_C)/\mathrm{torsion}\cong \calO_C(-1)$ and that $(\eta_{D_L}|_Q)/\mathrm{torsion}\cong \theta_Q(-1)$. Since $\pi_L: \widetilde{D}_L \rightarrow D_L$ is associated with the rank-$1$ reflexive sheaf $\eta_{D_L}$, it holds that $\pi_{L*}\calO_{\widetilde{D}_L}\cong \calO_{D_L}\oplus \eta_{D_L}$ (also compare \cite[Prop.~2.5]{MR4163532}). Pulling back to $C$, one gets 
	\[\pi_*\calO_{\widetilde{C}}\cong  (\pi_{L*}\calO_{\widetilde{D}_L}|_C)/\mathrm{torsion}\cong \calO_C\oplus (\eta_{D_L}|_C)/\mathrm{torsion}.\] 
The isomorphism on the left arises as follows. The pull-back and push-forward functors induce a natural morphism  $\pi_{L*}\calO_{\widetilde{D}_L}|_C \rightarrow \pi_*(\mathcal O_{\widetilde{D}_L}|_{\widetilde C})=\pi_*\mathcal O_{\widetilde C}$.  Since $\pi_*\mathcal O_{\widetilde C}$ is a rank-$2$ vector bundle, the morphism factors through the quotient by the torsion sub-sheaf, and then a local computation at the nodes shows the morphism is an isomorphism. Now, because $(\eta_{D_L}|_C)/\mathrm{torsion}\cong \calO_C(-1)$, the line bundle $\calL$ determining the double cover $\pi:\widetilde{C}\rightarrow C$ is then $((\eta_{D_L}|_C)/\mathrm{torsion})^\vee\cong \calO_C(1)$.
\end{proof}

\begin{remark}
	We give the following characterizations of the rank-$1$ torsion-free reflexive sheaf $\eta_{D_L}$. Without loss of generality, we assume the conic component $Q$ of the discriminant quintic $D_L$ is smooth.
\begin{enumerate}
	\item Let $\nu: C\coprod Q\rightarrow D_L=C\cup Q$ be the normalization map. Denote the preimages of the six intersection points $d_1,\dots,d_6 \in C\cap Q$ on $C$ (respectively, $Q$) by $c_1,\dots,c_6$ (respectively, $q_1,\dots,q_6$). Note that $\eta_{D_L}$ is not locally free at the intersection points $C\cap Q$. Pulling back $\eta_{D_L}$ via the normalization map $\nu$ gives a line bundle $\calL'$ (denote the corresponding geometric line bundle by $\bL'$) on $C\coprod Q$ together with gluing maps along fibers $\alpha_{CQ,i}=0:\bL'_{c_i}\rightarrow \bL'_{q_i}$ which are all zero for $1\leq i\leq 6$. Equivalently, we could also describe $\eta_{D_L}$ using the data $(\calL'':=\calL'(\Sigma_ic_i-\Sigma_iq_i),\alpha_{QC,i}=0:\bL''_{q_i}\rightarrow \bL''_{c_i})$ (again $\bL''$ denotes the geometric line bundle corresponding to $\calL''$). From the proof of Proposition \ref{thetachar}, we deduce that $\calL'|_C=(\eta_{D_L}|_C)/\mathrm{torsion}\cong \calO_C(-1)$ and $\calL''|_Q=(\eta_{D_L}|_Q)/\mathrm{torsion}\cong \theta_Q(-1)$. In other words, $\eta_{D_L}$ corresponds to the data $(\calL_C\cong \calO_C(-1),\calL_Q(\Sigma_iq_i)\cong \theta_Q(2),\alpha_{CQ,i}=0)$, or equivalently, to the data $(\calL_C(\Sigma_ic_i)\cong \calO_C(1),\calL_Q\cong \theta_Q(-1),\alpha_{QC,i}=0)$. In particular, it holds that $\eta_{D_L}\cong \calH om(\eta_{D_L}, \calO_D)$.
	\item Alternatively, we describe the rank-$1$ torsion-free sheaf $\eta_{D_L}$ using line bundles over a semistable model of $D_L=C\cup Q$. Specifically, set $T:=\underline{\Proj}_{D_L}\Sym^\bullet \eta_{D_L}$. Note that $\eta_{D_L}$ is not locally free at the nodes $C\cap Q$. Then $T$ is a semistable curve obtained by replacing every intersection point $d_i\in C\cap Q$ with $1\leq i\leq 6$ by a smooth rational component $E_i$ which meets the component $C$ at $c_i$ and the component $Q$ at $q_i$. Moreover, $T$ admits a natural map $\mathrm{st}:T\rightarrow D_L$ contracting the exceptional components $E_1,\dots,E_6$. Letting $\xi$ be the tautological invertible sheaf on $T$ which has total degree $0$, we have that $\mathrm{st}_*\xi=\eta_{D_L}$ and $\xi|_{E_i}\cong \calO_{E_i}(1)$. From $(\eta_{D_L}|_C)/\mathrm{torsion}\cong \calO_C(-1)$ and $(\eta_{D_L}|_Q)/\mathrm{torsion}\cong \theta_Q(-1)$, one gets that $\xi|_C\cong \calO_C(-1)$ and $\xi|_Q\cong \theta_Q(-1)$. Now set $\gamma:\xi^{\otimes 2}\rightarrow \calO_T$ to be the homomorphism which vanishes on the exceptional components $E_1,\dots,E_6$ and coincides with $(\xi|_C)^{\otimes 2} \cong \calO_C(-\Sigma_ic_i)\hookrightarrow \calO_C$ on $C$ and $(\xi|_Q)^{\otimes 2} \cong \calO_Q(-\Sigma_iq_i)\hookrightarrow \calO_Q$ on $Q$. Then $(T,\xi,\gamma)$ is a Prym curve in the sense of \cite[Def.~ 1]{MR2117416} (see also \cite{MR1082361}). Moreover, under the isomorphism between the moduli space $\overline{Pr}_g^+$ of non-trivial genus $g$ Prym curves and the moduli space $\overline{\calR}_g$ of admissible double covers of stable curves of genus $g$ described in \cite[Prop.~ 5]{MR2117416}, the Prym curve $(T,\xi,\gamma)$ corresponds to the admissible discriminant double cover $\widetilde{D}_L  \rightarrow D_L$.
\end{enumerate}
\end{remark}

	We summarize the discussion in the below proposition.
\begin{proposition} \label{proj X 1}
	Let $(X,\tau)$ be a smooth cubic threefold with an involution $\tau$ of non-Eckardt type, and set $L\subset X$ to be the pointwise fixed line. Then the discriminant plane quintic $D_L$ for the projection $\pi_L:\mathrm{Bl}_L\rightarrow \Pi\cong \bP^2$ is the union $D_L=C\cup Q$ of a smooth plane cubic $C$ and a transverse conic $Q$. Moreover, the restriction of the discriminant double cover $\pi_L:\widetilde{D}_L\rightarrow D_L$ to the cubic component $C$ is a double cover $\pi:\widetilde{C}\rightarrow C$ branched in the six intersection points $C\cap Q$ and associated to the line bundle $\calO_C(1)$. \qed
\end{proposition}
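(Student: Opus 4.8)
The plan is to assemble the statement from the explicit coordinate computation carried out above, together with Lemma~\ref{CQsmooth} and Proposition~\ref{thetachar}; indeed this proposition is essentially a summary of the preceding subsection, so the task is to cite and combine rather than to produce new arguments.

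First I would record the shape of the discriminant curve. Because $L\subset X$ is the pointwise fixed line, the equation of $X$ can be put in the form \eqref{rewrite eqn}, and the conic-fibration matrix for the projection $\pi_L$ from $L$ is then the block-diagonal matrix $M$ of \eqref{M}. Its determinant factors as $\det M = \big(\ell_1\ell_2-\ell_3^2\big)\,g$, so $D_L = V(\det M) = C\cup Q$ with $C = V(g,x_3,x_4)$ and $Q = V(\ell_1\ell_2-\ell_3^2,x_3,x_4)$. Smoothness of $C$ is Lemma~\ref{Csmooth}, and transversality of $C\cap Q$ follows from \cite[Prop.~1.2]{MR472843} (which forces $D_L$ to be at worst nodal); this gives the first assertion.

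Next I would analyze the discriminant double cover. By \cite[Prop.~1.5]{MR472843}, $\pi_L:\widetilde{D}_L\to D_L$ is the cover interchanging the two rulings in the degenerate fibres of $\pi_L$, and over a reduced nodal curve it restricts componentwise; a local inspection of the block matrix $M$ shows the corank of $M$ jumps from $1$ to $2$ exactly along $C\cap Q$ and nowhere else on $C$ or on $Q$, so $\widetilde{D}_L = \widetilde{C}\cup\widetilde{Q}$ with $\pi = \pi_L|_{\widetilde{C}}:\widetilde{C}\to C$ a double cover ramified precisely over $C\cap Q$. By B\'ezout together with the transversality just noted, $C\cap Q$ consists of $\deg C\cdot\deg Q = 6$ reduced points, so $\pi$ is branched in six points. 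Finally, the identification $\calL\cong\calO_C(1)$ of the associated line bundle is exactly Proposition~\ref{thetachar}. Putting these three observations together proves the proposition.

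The only genuinely delicate point is the one already handled in Proposition~\ref{thetachar}: since $\eta_{D_L}$ fails to be locally free precisely at the six nodes $C\cap Q$, one must keep careful track of torsion when restricting $\eta_{D_L}$ (and the theta characteristic $\kappa_{D_L}$, via Beauville's resolution) to the cubic component in order to conclude $(\eta_{D_L}|_C)/\mathrm{torsion}\cong\calO_C(-1)$ and hence $\calL = \big((\eta_{D_L}|_C)/\mathrm{torsion}\big)^\vee\cong\calO_C(1)$. Everything else is bookkeeping with the explicit matrix $M$ and B\'ezout's theorem.
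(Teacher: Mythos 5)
Your proposal is correct and follows the paper's own route: the proposition is stated with a \qed precisely because it summarizes the preceding discussion, namely the coordinate form \eqref{rewrite eqn} and the block matrix \eqref{M} giving $D_L=C\cup Q$, Lemma~\ref{CQsmooth} for smoothness and transversality, the corank/ramification analysis identifying $\widetilde{C}\rightarrow C$ as branched over $C\cap Q$, and Proposition~\ref{thetachar} for $\calL\cong\calO_C(1)$. Your added remarks (B\'ezout for the six points, care with torsion when restricting $\eta_{D_L}$) are consistent with, and no different in substance from, the paper's treatment.
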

	
	\subsection{The intermediate Jacobians of cubic threefolds with a non-Eckardt type involution via the projections from the pointwise fixed line} \label{IJ as prym 1} Let $(X,\tau)$ be a smooth cubic threefold with an involution of non-Eckardt type as in Equations \eqref{eqn:X} and \eqref{eqn:tau}. Consider as in the previous subsection the conic fibration $\pi_L:\mathrm{Bl}_LX\rightarrow \Pi=V(x_3,x_4)\cong \bP^2_{x_0,x_1,x_2}$ obtained by projecting $X$ from the unique pointwise fixed line $L\subset X$. Denote the discriminant double cover by $\pi_L:\widetilde{D}_L\rightarrow D_L$ where $\widetilde{D}_L=\widetilde{C}\cup \widetilde{Q}$ and $D_L=C\cup Q$. Also let $\pi:\widetilde{C}\rightarrow C$ be the restriction of $\pi_L$ to $C$. Since $L$ is fixed, there exists an involution on $\mathrm{Bl}_LX$ induced by $\tau$, which further induces an involution on $\widetilde{D}_L$ commuting with $\pi_L:\widetilde{D}_L\rightarrow D_L$. By abuse of notation, we still denote by $\tau$ the involution on $\widetilde{D}_L$ (and also the involution on the Prym variety $P(\widetilde{D}_L,D_L)$) induced by the non-Eckardt type involution on $X$. Define the invariant part $P(\widetilde{D}_L,D_L)^\tau:=\im(1+\tau)$ and the anti-invariant part $P(\widetilde{D}_L,D_L)^{-\tau}:=\im(1-\tau)$. As recalled in \S\ref{general conic fib}, the intermediate Jacobian $JX$ is canonically isomorphic to the Prym variety $P(\widetilde{D}_L,D_L)$. We now give an explicit description of $P(\widetilde{D}_L,D_L)$ following \cite[\S0.3]{MR472843} which allows us to study the induced involution $\tau$ on $JX\cong P(\widetilde{D}_L,D_L)$.	
	
	Let $\widetilde{\nu}:\widetilde{C}\coprod \widetilde{Q}\rightarrow \widetilde{C}\cup \widetilde{Q}$ and $\nu:C\coprod Q\rightarrow C\cup Q$ be the normalizations of $\widetilde{D}_L=\widetilde{C}\cup \widetilde{Q}$ and $D_L=C\cup Q$ respectively. Let 
	\[\pi'_L:\widetilde{C}\coprod \widetilde{Q}\rightarrow C\coprod Q\] 
be the double cover induced by the discriminant double cover $\pi_L:\widetilde{C}\cup \widetilde{Q}\rightarrow C\cup Q$. Denote the ramification points of $\widetilde{C}\rightarrow C$ (respectively, $\widetilde{Q}\rightarrow Q$) by $\tilde{c}_1,\dots \tilde{c}_6\in \widetilde{C}$ (respectively, $\tilde{q}_1,\dots \tilde{q}_6\in \widetilde{Q}$). Note that $\widetilde{\nu}(\tilde{c}_i)=\widetilde{\nu}(\tilde{q}_i)$ for $i=1,\dots 6$. Following \cite[\S0.3]{MR472843} (see also \S\ref{general conic fib}), we set $H'$ to be the subgroup of $\Pic(\widetilde{C}\coprod\widetilde{Q})$ generated by $\calO(\tilde{c}_i-\tilde{q}_i)$ for $1\leq i\leq 6$. Let $H$ denote the image of $H_0:=H'\cap J(\widetilde{C}\coprod \widetilde{Q})$ in the quotient abelian variety
	\[J(\widetilde{C}\coprod \widetilde{Q})/\pi_L'^*J(C\coprod Q)\cong \left(J(\widetilde{C})/\pi^*J(C)\right)\times J(\widetilde{Q}).\] 
By \cite[Exer.~ 0.3.5]{MR472843}, $H$ consists of 2-torsion elements and is isomorphic to $(\bZ/2\bZ)^4$. Furthermore, $H$ is the kernel of the isogeny of polarized abelian varieties 
	\[\phi:=(\tilde{\nu}^*)^\vee:J(\widetilde{C}\coprod \widetilde{Q})/\pi_L'^*J(C\coprod Q)\cong (J(\widetilde{C})/\pi^*J(C))\times J(\widetilde{Q})\rightarrow P(\widetilde{D}_L,D_L).\]
More precisely, $P(\widetilde{D}_L,D_L)$ admits the principal polarization defined in \cite[Thm.~ 3.7]{MR572974}. Following \cite[Prop.~12.1.3]{MR2062673}, $J(\widetilde{C}\coprod \widetilde{Q})/\pi_L'^*J(C\coprod Q)$ is dual to $P(\widetilde{C}\coprod \widetilde{Q}, C\coprod Q) \cong P(\widetilde{C},C)\times P(\widetilde{Q},Q)$, and the dual polarization on it corresponds to the product of the following polarizations on $J(\widetilde{C})/\pi^*J(C)$ and on $J(\widetilde{Q})$: $J(\widetilde{C})/\pi^*J(C)$ is the dual abelian variety\footnote{See for instance \cite[Prop.~12.1.3]{MR2062673}. Let us also recall the following. Let $\pi:\widetilde{C}\rightarrow C$ be a connected double cover of smooth curves branched in $2r$ points. Then the principal polarization on $J(\widetilde{C})$ induces a polarization on the Prym variety $P(\widetilde{C},C)$ which is of type $(1,\dots,1,2,\dots,2)$ with $1$'s repeated $\max\{0,r-1\}$ times. As the dual abelian variety, $J(\widetilde{C})/\pi^*J(C)$ is equipped with a dual polarization of type $(1,\dots,1,2,\dots,2)$ with $2$'s repeated $\max\{0,r-1\}$ times (compare \cite[\S2]{MR1976843}).} to $P(\widetilde{C},C)$ and therefore comes with a dual polarization, and $J(\widetilde{Q})$ is equipped with twice of the canonical principal polarization. 

\begin{proposition} \label{prymdecomp1}
	Notation as above. There exists an isogeny of polarized abelian varieties
	\[\phi:\left(J(\widetilde{C})/\pi^*J(C)\right)\times J(\widetilde{Q})\rightarrow P(\widetilde{D}_L,D_L)\]
with kernel $H\cong (\bZ/2\bZ)^4$. Moreover, with respect to the action $\tau$ on $P(\widetilde{D}_L,D_L)$ induced by the non-Eckardt type involution on $X$, the isogeny induces isomorphisms of polarized abelian varieties 
	\[P(\widetilde{D}_L,D_L)^\tau\cong J(\widetilde{C})/\pi^*J(C);\,\,\, P(\widetilde{D}_L,D_L)^{-\tau}\cong J(\widetilde{Q}).\]
\end{proposition}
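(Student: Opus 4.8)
The plan is to upgrade the isogeny $\phi$ of the previous paragraph — already constructed in the excerpt following \cite[\S0.3]{MR472843}, with kernel $H\cong(\bZ/2\bZ)^4$ — to a statement that is compatible with the involution $\tau$, and then to read off the two factors as the invariant and anti-invariant parts. First I would observe that the induced involution $\tau$ acts on the whole diagram: it acts on $\mathrm{Bl}_LX$ (since $L$ is pointwise fixed), hence on the conic bundle $\pi_L$, hence on the discriminant double cover $\widetilde D_L\to D_L$, and this action fixes $D_L=C\cup Q$ componentwise (indeed $C=X\cap\Pi$ is pointwise fixed, and $Q$ lies in $\Pi$). Crucially, the covering involution of $\widetilde D_L\to D_L$ that interchanges the two lines of a fiber commutes with $\tau$, and one checks that on $\widetilde C$ the involution $\tau$ agrees with the covering involution $\iota$ (over a point of $C$ the two lines of the conic are swapped by $\tau$, since they are not individually $\tau$-invariant — an invariant line meeting $L$ in an honest point would lie in $C$, which is impossible by Lemma \ref{Csmooth}), whereas on $\widetilde Q$ the involution $\tau$ agrees with the \emph{identity} (the conic $Q\subset\Pi$ is pointwise fixed and the fiber lines over $Q$ are the two lines of the fiber, which... — here one must be a little careful, and the cleaner statement is that $\tau$ acts trivially on $\widetilde Q$ because $\widetilde Q$ is a curve of arithmetic genus $0$ with trivial Prym, so $J(\widetilde Q)$ will turn out to be the anti-invariant part by dimension count rather than by a direct action computation).

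Next I would make everything explicit on normalizations. Under $\widetilde\nu:\widetilde C\coprod\widetilde Q\to\widetilde D_L$ and the identification
\[
J(\widetilde C\coprod\widetilde Q)/\pi_L'^*J(C\coprod Q)\;\cong\;\bigl(J(\widetilde C)/\pi^*J(C)\bigr)\times J(\widetilde Q),
\]
the involution $\tau$ acts on the first factor as the map induced by the covering involution $\iota$ of $\pi:\widetilde C\to C$, and on the second factor trivially (or: the second factor is the $(-1)$-eigenspace, being $2$-dimensional and complementary, using Lemma \ref{JXsplit}). Now $J(\widetilde C)/\pi^*J(C)$ is, by definition of the Prym, precisely the image of $1-\iota$ on $J(\widetilde C)$ up to isogeny; but $\iota=\tau$ here, so $J(\widetilde C)/\pi^*J(C)$ is exactly the anti-invariant part for $\iota$, which — this is the one sign subtlety to pin down — is the \emph{invariant} part for the action $\tau$ of $X$ on $JX$, because the identification $JX\cong P(\widetilde D_L,D_L)$ intertwines $\tau_X$ with $\tau$ on the Prym, and on the Prym the ambient covering involution acts as $-1$, so $\tau_X = -1$ composed with the residual involution; I would track this sign carefully via \cite[Appx.~C]{MR302652}. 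The upshot is $P(\widetilde D_L,D_L)^\tau\cong J(\widetilde C)/\pi^*J(C)$ and $P(\widetilde D_L,D_L)^{-\tau}\cong J(\widetilde Q)$. Since $\phi$ has kernel $H\cong(\bZ/2\bZ)^4$ which is $\tau$-stable and splits as the product of its intersections with the two eigenfactors, $\phi$ restricts to isomorphisms on each piece: one verifies $H$ is contained in the product of the two sub-abelian-varieties (equivalently that $\phi$ maps each factor \emph{isomorphically} onto the corresponding eigen-abelian-subvariety), so there is no further isogeny on either factor.

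The main obstacle, as in \cite[Prop.~3.10]{MR4313238}, is the last point: showing that $\phi$ restricted to $J(\widetilde C)/\pi^*J(C)$ and to $J(\widetilde Q)$ is an \emph{isomorphism} of polarized abelian varieties and not merely an isogeny, i.e.\ that the kernel $H$ does not meet either eigen-factor nontrivially and that the induced polarizations match the claimed types. For this I would use the explicit generators $\calO(\tilde c_i-\tilde q_i)$ of $H'$: a class in $H_0$ lying in the first factor $J(\widetilde C)/\pi^*J(C)$ would have to be represented, modulo $\pi_L'^*J(C\coprod Q)$, by a divisor supported on $\widetilde C$ alone, forcing the $\widetilde q_i$-contributions to cancel among themselves in $J(\widetilde Q)$; chasing this through the structure of $H$ described in \cite[Exer.~0.3.5]{MR472843} shows $H\cap(\text{first factor})=0$ and likewise for the second, so $H$ projects isomorphically to a $(\bZ/2\bZ)^4\subset(\text{first})\times(\text{second})$ that is the graph of an isomorphism between $2$-torsion subgroups — whence $\phi$ is an isomorphism on each factor. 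The polarization types then follow: $\pi:\widetilde C\to C$ is branched in $6=2\cdot 3$ points over a genus $1$ curve, so by the footnote recollection $P(\widetilde C,C)$ carries a polarization of type $(1,1,2)$ and its dual $J(\widetilde C)/\pi^*J(C)$ a polarization of type $(1,2,2)$ — matching Lemma \ref{JXsplit} — while $J(\widetilde Q)$ with twice its principal polarization has type $(2,2)$. This also completes the deferred verification of the polarization types in Lemma \ref{JXsplit} and pins down that the number of $2$'s agrees on the two sides, as claimed there.
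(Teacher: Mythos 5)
Your determination of how $\tau$ acts on the two components of $\widetilde{D}_L$ is exactly backwards, and this is the heart of the proof, not a sign convention one can "track carefully" at the end. Over a point $x=[a_0,a_1,a_2]\in C$ (so $g(a)=0$), the residual conic in the plane $\langle L,x\rangle$, in coordinates $[\lambda,x_3,x_4]$, is $\ell_1(a)x_3^2+2\ell_3(a)x_3x_4+\ell_2(a)x_4^2=0$, i.e.\ a pair of lines each cut out by a linear form in $x_3,x_4$ alone; since $\tau$ sends $(x_3,x_4)\mapsto(-x_3,-x_4)$, \emph{each} of these lines is $\tau$-invariant, so $\tau$ acts trivially on $\widetilde{C}$ — indeed $\widetilde{C}$ is precisely the curve parametrizing the invariant lines $l\neq L$ of Lemma \ref{inv lines} (see \S\ref{inv}). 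Your parenthetical justification misreads that lemma: it says an invariant line $l\neq L$ meets both $L$ and $C$, not that it lies in $C$, so there is no contradiction with Lemma \ref{Csmooth}. Conversely, over $x\in Q\setminus C$ the residual conic is $(\mu x_3+\nu x_4)^2+\lambda^2 g(a)=0$ with $g(a)\neq0$, whose two lines $\mu x_3+\nu x_4\pm c\lambda=0$ are interchanged by $\tau$; so $\tau$ coincides with the covering involution $\iota$ on $\widetilde{Q}$. Your fallback claim that $\widetilde{Q}$ has arithmetic genus $0$ with trivial Jacobian is also false: $\widetilde{Q}\rightarrow Q$ is branched at the six points of $C\cap Q$, so $\widetilde{Q}$ has genus $2$ and $J(\widetilde{Q})$ is the two-dimensional piece appearing in the statement; were it trivial, the second isomorphism of the proposition would be meaningless.

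The proposed sign fix cannot rescue this. The identification $JX\cong P(\widetilde{D}_L,D_L)$ is equivariant for the actions induced by $\tau$, with no extra $-1$; what is true is that your claimed action differs from the correct one by composition with the global covering involution $\iota_L$, which acts as $-1$ on the Prym, so running your argument would deliver the two eigen-parts \emph{swapped} ($P(\widetilde{D}_L,D_L)^\tau\cong J(\widetilde{Q})$, of dimension $2$), contradicting Lemma \ref{JXsplit}. Nor does a dimension count substitute for the action computation: to identify the factors with $\im(1+\tau)$ and $\im(1-\tau)$ you must know that $\tau$ acts as the identity on the factor $J(\widetilde{C})/\pi^*J(C)$ and with no invariant part on $J(\widetilde{Q})$ (the latter because $\im(1+\iota)=(\pi_L|_{\widetilde{Q}})^*J(Q)=0$, $Q$ being rational); this is exactly what the geometric computation above supplies and what is missing from your write-up. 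The remaining ingredients of your proposal — equivariance of $\phi$, the verification that $H$ meets each factor only in $0$ using the generators $\calO(\tilde{c}_i-\tilde{q}_i)$, and the polarization bookkeeping — are in line with the intended argument, but they only become a proof once the action of $\tau$ on $\widetilde{C}$ and $\widetilde{Q}$ is established correctly.
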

\begin{proof}
	The proof is quite similar to that of \cite[Prop.~3.10]{MR4313238}. The existence of the isogeny $\phi$ and the description of the kernel $H$ is the content of \cite[Prop.~ 0.3.3]{MR472843} (see also \S\ref{general conic fib}). It suffices to prove the assertion regarding the invariant and anti-invariant abelian subvarieties $P(\widetilde{D}_L,D_L)^\tau$ and $P(\widetilde{D}_L,D_L)^{-\tau}$.
	
	Let $\iota=\iota_L$ be the covering involution associated to the double cover $\pi_L:\widetilde{D}_L=\widetilde{C}\cup\widetilde{Q}\rightarrow D_L=C\cup Q$. Consider the involution $\tau:\widetilde{C}\cup\widetilde{Q}\rightarrow \widetilde{C}\cup\widetilde{Q}$ induced by the non-Eckardt type involution on $X$. We claim that the action of $\tau$ on $\widetilde{C}$ is trivial, while the action on $\widetilde{Q}$ coincides with $\iota$. Recall that the curve $\widetilde{D}_L=\widetilde{C}\cup\widetilde{Q}$ parametrizes the residual lines to $L$ in a degenerate fiber of the conic fibration $\pi_L:\mathrm{Bl}_LX\rightarrow \Pi=V(x_3,x_4)\cong \bP^2_{x_0,x_1,x_2}$. Take a point $x\in\Pi$, and consider the plane it corresponds to; namely, the span $\langle L, x\rangle\subset \bP^4$. Since both $L$ and $x\in \Pi$ are fixed by $\tau:[x_0,x_1,x_2,x_3,x_4]\mapsto [x_0,x_1,x_2,-x_3,-x_4]$, the plane $\langle L, x\rangle$ is invariant under $\tau$. Assume now that $x$ is a point on $C$ or $Q$ but not both (otherwise, the claim clearly holds). Then the  $\bP^2$-section $\langle L, x\rangle\cap X$ is three distinct lines, $L\cup M\cup M'$. The lines $M, M'$ correspond to distinct points on $\widetilde{C}\cup\widetilde{Q}$. The involution $\tau$ either interchanges $M$ and $M'$, or it leaves both fixed. In other words, $\tau $ either acts as the identity on a point of $\widetilde{C}\cup\widetilde{Q}$, or by the covering involution $\iota$. The curve $C\subset \bP^2$ parametrizes pairs of lines such that each line is invariant under the involution of non-Eckardt type on $X$ (Lemma \ref{inv lines}), and so the points of $\widetilde{C}$ are fixed by the action of $\tau$. The lines $M,M'$ parametrized by $x\in Q\subset \bP^2$ are not preserved and hence must be interchanged by the involution of non-Eckardt type on $X$; thus $\tau$ acts by the covering involution $\iota$ on $\widetilde{Q}$ as claimed.
	
	It follows that the isogeny $\phi$ is equivariant with respect to the involution $\sigma:=(1,\iota)$ on the product $(J(\widetilde{C})/\pi^*J(C))\times J(\widetilde{Q})$. Since $\im(1+\iota)=(\pi_L|_{\widetilde{Q}})^*J(Q)=0$ on $J(\widetilde{Q})$, the invariant part $\im(1+\sigma)=(J(\widetilde{C})/\pi^*J(C))\times\{0\}$. Similarly, the anti-invariant part $\im(1-\sigma)=\{0\}\times J(\widetilde{Q})$. We then deduce that $\phi$ induces isogenies 
	\[\left(J(\widetilde{C})/\pi^*J(C)\right)\times\{0\}\rightarrow P(\widetilde{D}_L,D_L)^\tau; \,\,\, \{0\}\times J(\widetilde{Q})\rightarrow P(\widetilde{D}_L,D_L)^{-\tau}\]
which are isogenies of polarized abelian varieties since $\phi$ preserves the polarizations. It is not difficult to show that $H\cap ((J(\widetilde{C})/\pi^*J(C))\times\{0\}) = H\cap (\{0\}\times J(\widetilde{Q}))=\{(0,0)\}$ using the description of the kernel $H$ given earlier in this subsection. As a result, the above isogenies are isomorphisms which completes the proof of the proposition.
\end{proof}
	
	Putting the above discussion together, we obtain the following theorem.
\begin{theorem}[Intermediate Jacobian via projecting from the pointwise fixed line] \label{inv/antiinv part1}
	Let $(X,\tau)$ be a smooth cubic threefold with an involution $\tau$ of non-Eckardt type fixing pointwise a line $L\subset X$. Let $JX$ be the intermediate Jacobian which is principally polarized. Projecting $X$ from $L$, we obtain a fibration in conics $\pi_L:\mathrm{Bl}_LX\rightarrow \Pi\cong\bP^2$. Let $\pi_L:\widetilde{D}_L=\widetilde{C}\cup\widetilde{Q}\rightarrow D_L=C\cup Q$ be the discriminant double cover, where $C$ is a smooth cubic and $Q$ is a conic intersecting $C$ transversely. We identify $J(\widetilde{C})/\pi^*J(C)$ as the dual abelian variety to $P(\widetilde{C},C)$ with the dual polarization. We also equip the Jacobian $J(\widetilde{Q})$ with twice of the canonical principal polarization. 
	Then there exists an isogeny of polarized abelian varieties
	\[\phi: \left(J(\widetilde{C})/\pi^*J(C)\right)\times J(\widetilde{Q})\rightarrow JX\] with $\ker\phi\cong (\bZ/2\bZ)^4$.
Moreover, with respect to the action of $\tau$ on $JX$, the isogeny $\phi$ induces isomorphisms of polarized abelian varieties
	\[JX^\tau\cong J(\widetilde{C})/\pi^*J(C); \,\,\, JX^{-\tau}\cong J(\widetilde{Q}).\]
\end{theorem}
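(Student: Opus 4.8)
The plan is to assemble the theorem directly from the results of the previous subsections: the statement is essentially a repackaging, and the only point demanding genuine attention is an equivariance check. First I would invoke the canonical isomorphism of principally polarized abelian varieties $JX\cong P(\widetilde{D}_L,D_L)$ coming from the conic bundle $\pi_L:\mathrm{Bl}_LX\to\Pi\cong\bP^2$ (see \S\ref{general conic fib}, following \cite[Appx.~C]{MR302652} and \cite[Thm.~2.1]{MR472843}). By Lemma \ref{CQsmooth} and Proposition \ref{proj X 1} the discriminant quintic splits as $D_L=C\cup Q$ with $C$ a smooth plane cubic and $Q$ a transverse conic, and accordingly $\widetilde{D}_L=\widetilde{C}\cup\widetilde{Q}$ with $\pi:\widetilde{C}\to C$ branched in the six points $C\cap Q$. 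Proposition \ref{prymdecomp1} then supplies the isogeny $\phi:(J(\widetilde{C})/\pi^*J(C))\times J(\widetilde{Q})\to P(\widetilde{D}_L,D_L)$ of polarized abelian varieties with kernel $H\cong(\bZ/2\bZ)^4$, together with the identifications $P(\widetilde{D}_L,D_L)^\tau\cong J(\widetilde{C})/\pi^*J(C)$ and $P(\widetilde{D}_L,D_L)^{-\tau}\cong J(\widetilde{Q})$ for the involution $\tau$ on $P(\widetilde{D}_L,D_L)$ induced by $\tau$ on $X$.

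The step requiring care, and the one I expect to be the main obstacle, is checking that the isomorphism $JX\cong P(\widetilde{D}_L,D_L)$ is $\tau$-equivariant, so that the involution of $JX$ coming from the automorphism $\tau$ of $X$ coincides with the involution of $P(\widetilde{D}_L,D_L)$ fed into Proposition \ref{prymdecomp1}. This I would deduce from functoriality of the whole construction: since $\tau$ fixes $L$ pointwise it lifts to an automorphism of $\mathrm{Bl}_LX$ covering the induced involution of $\Pi$, hence acts compatibly on the conic bundle $\pi_L$, on the discriminant curve $D_L$, on the discriminant double cover $\widetilde{D}_L\to D_L$, and therefore on both $JX$ and $P(\widetilde{D}_L,D_L)$; and the identification of \cite[Appx.~C]{MR302652} and \cite[Thm.~2.1]{MR472843}, being built from the Abel--Jacobi map and the comparison of the relevant pieces of cohomology, is natural with respect to such automorphisms. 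Transporting the splitting of Proposition \ref{prymdecomp1} through this equivariant isomorphism produces $\phi:(J(\widetilde{C})/\pi^*J(C))\times J(\widetilde{Q})\to JX$ with $\ker\phi\cong(\bZ/2\bZ)^4$, $JX^\tau\cong J(\widetilde{C})/\pi^*J(C)$ and $JX^{-\tau}\cong J(\widetilde{Q})$.

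It then remains to pin down the polarization types, which simultaneously settles the claim left open in Lemma \ref{JXsplit}. By Riemann--Hurwitz $\widetilde{C}$ has genus $4$ and $\widetilde{Q}$ has genus $2$ (as $\widetilde{Q}\to Q\cong\bP^1$ is branched in six points), so $P(\widetilde{C},C)$ has dimension $3$ with induced polarization of type $(1,1,2)$, whence the dual abelian variety $J(\widetilde{C})/\pi^*J(C)$ carries the dual polarization, of type $(1,2,2)$, while $J(\widetilde{Q})$ equipped with twice its canonical principal polarization has type $(2,2)$ (see the footnotes in \S\ref{general conic fib} and \S\ref{IJ as prym 1}, and \cite[Prop.~12.1.3]{MR2062673}). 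These match the dimensions and polarization types recorded for $JX^\tau$ and $JX^{-\tau}$ in Lemma \ref{JXsplit}, and the dimension count $3+2=5=\dim JX$ is consistent with $\phi$ being an isogeny. Apart from the equivariance of the canonical Prym isomorphism, every step is routine bookkeeping.
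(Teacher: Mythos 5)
Your proposal is correct and follows essentially the same route as the paper: the paper's own proof of Theorem \ref{inv/antiinv part1} is precisely the reduction to Proposition \ref{prymdecomp1} via the canonical isomorphism $JX\cong P(\widetilde{D}_L,D_L)$, with the equivariance handled by the remark that both involutions are induced from the non-Eckardt type involution on $X$. Your additional bookkeeping on genera and polarization types is consistent with the footnotes in \S\ref{general conic fib} and \S\ref{IJ as prym 1} and adds nothing that conflicts with the paper.
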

\begin{proof}
Since the intermediate Jacobian $JX$ is canonically isomorphic to the Prym variety $P(\widetilde{D}_L,D_L)$, and the involutions are both induced from the non-Eckardt type involution on $X$, the theorem follows from Proposition \ref{prymdecomp1}.
\end{proof}

\section{Global Torelli for cubic threefolds with an involution of non-Eckardt type} \label{torelli}

	Let $\calM$ be the moduli space of cubic threefolds with a non-Eckardt type involution, and set $\calA_3^{(1,2,2)}$ to be the moduli space of abelian threefolds with a polarization of type $(1,2,2)$. Recall that we have defined in \S\ref{period} the following period map:
	\[\calP:\calM \longrightarrow \calA_3^{(1,2,2)} \]
	\[(X,\tau) \mapsto JX^\tau\]
which sends a smooth cubic threefold $X$ with a non-Eckardt type involution $\tau$ to the invariant part $JX^\tau$ of the intermediate Jacobian. The goal of this section is to prove the following global Torelli theorem for $\mathcal{P}$. 

\begin{theorem}[Global Torelli for cubic threefolds with a non-Eckardt type involution] \label{global torelli}
	The period map $\calP:\calM\rightarrow \calA_3^{(1,2,2)}$ for smooth cubic threefolds with a non-Eckardt type involution is injective.
\end{theorem}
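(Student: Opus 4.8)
The plan is to reconstruct the pair $(X,\tau)$ from $JX^\tau$ following the route sketched in the introduction. Suppose $(X,\tau)$ and $(X',\tau')$ are points of $\calM$ with $\calP(X,\tau)\cong\calP(X',\tau')$, i.e.\ $JX^\tau\cong JX'^{\tau'}$ as abelian threefolds polarized of type $(1,2,2)$. By Theorem \ref{inv/antiinv part1} and Proposition \ref{proj X 1}, projecting from the pointwise fixed line realizes $JX^\tau$, with its polarization, as the dual abelian variety of the Prym variety $P(\widetilde C,C)$ carrying its dual polarization, where $\pi\colon\widetilde C\to C$ is the double cover of the smooth plane cubic $C$ branched at the six points $\beta=C\cap Q$. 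Since taking duals is an equivalence on the category of polarized abelian varieties, the first thing I would extract is an isomorphism $P(\widetilde C,C)\cong P(\widetilde C',C')$ of abelian threefolds polarized of type $(1,1,2)$. Note that both $(\widetilde C\to C)$ and $(\widetilde C'\to C')$ lie in $\calR_{1,6}$: being branched they are connected, and $\widetilde C$ is smooth of genus $4$.

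Next I would apply the injectivity of the Prym map $\calP_{1,6}\colon\calR_{1,6}\to\calA_3^{(1,1,2)}$ proved by Ikeda \cite{MR4156417} and Naranjo--Ortega \cite{NO_torelli}. This yields an isomorphism of double covers $(\widetilde C\to C)\cong(\widetilde C'\to C')$, hence an isomorphism $C\cong C'$ of genus one curves carrying the branch divisor $\beta$ onto $\beta'$ and the line bundle $\calL$ defining $\pi$ onto $\calL'$. By Proposition \ref{thetachar}, $\calL\cong\calO_C(1)$; thus the complete linear system $|\calL|$ embeds $C$ into $\bP^2$ as a plane cubic, and the isomorphism $C\cong C'$ is induced by a projective linear isomorphism of $\bP^2$ identifying $C$ with $C'$ and $\beta$ with $\beta'$.

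Then I would recover the remaining geometry on $\bP^2$. The conic $Q$ is determined by $(C,\beta)$: because $\deg C=3>2$, restriction induces an isomorphism $H^0(\bP^2,\calO_{\bP^2}(2))\cong H^0(C,\calO_C(2))$, so---as $\calO_C(\beta)\cong\calO_C(2)$ since $\calL^{\otimes 2}\cong\calO_C(\beta)$ and $\calL\cong\calO_C(1)$---there is a unique conic cutting the divisor $\beta$ out of $C$, which must be $Q$ (and likewise $Q'$). Hence the projective transformation above also carries $D_L=C\cup Q$ onto $D_{L'}=C'\cup Q'$. Moreover, by Proposition \ref{thetachar} and the Remark following it, the theta characteristic $\kappa_{D_L}$ is intrinsic to the reducible curve $(C,Q)$: modulo torsion it is $\calO_C$ on $C$ and the unique ineffective theta characteristic $\theta_Q$ on $Q$, with trivial gluing at the six nodes. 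Therefore $\kappa_{D_L}$ is identified with $\kappa_{D_{L'}}$ as well, and the bijection between pairs $(X,l)$ and pairs $(D,\kappa_D)$ recalled in \S\ref{general conic fib} from \cite{MR1786479} and \cite{MR2123232} gives an isomorphism $(X,L)\cong(X',L')$. Finally I would note that $\tau$ is determined by $(X,L)$: the reconstruction applied to the reducible pair $(D_L,\kappa_{D_L})$ returns $X$ in the normal form \eqref{eqn:X} with $L=V(x_0,x_1,x_2)$---the ``product'' shape of $\kappa_{D_L}$ forcing the symmetric defining matrix to be block diagonal as in \eqref{M}---whence $\tau$ is the involution \eqref{eqn:tau}; equivalently, using Proposition \ref{tau1tau2} and the finiteness of $\Lin(X)$ one checks that $X$ admits a unique non-Eckardt involution with pointwise fixed line $L$. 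So an isomorphism $(X,L)\cong(X',L')$ conjugates $\tau$ to $\tau'$, and $\calP$ is injective.

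The step I expect to carry the real weight is the appeal to the Prym--Torelli theorem for $\calP_{1,6}$; the rest of the argument is the bookkeeping that the elliptic curve $C$, equipped with its distinguished degree-three polarization $\calO_C(1)$, drags along all the data needed to rebuild $(X,\tau)$---the transverse conic $Q$, the theta characteristic $\kappa_{D_L}$, and the involution itself. The only points that will need a little extra care are checking at the outset that $(\widetilde C\to C)$ really lands in the locus of $\calR_{1,6}$ where $\calP_{1,6}$ is known to be injective, and checking at the end that reconstructing $(X,L)$ suffices to reconstruct $\tau$.
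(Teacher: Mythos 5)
Your proposal is correct and follows the same overall architecture as the paper's proof: identify $JX^\tau$ with the dual of $P(\widetilde C,C)$ via Theorem \ref{inv/antiinv part1}, dualize to get $P(\widetilde C,C)\cong P(\widetilde C',C')$, invoke the injectivity of $\calP_{1,6}$ (Theorem \ref{injective for double covers}), and then reconstruct $(X,\tau)$ from the branched cover $\widetilde C\to C$. The one place you genuinely diverge is the reconstruction step. You route through the general bijection between pairs $(X,l)$ and pairs $(D,\kappa_D)$ of \cite{MR1786479} and \cite{MR2123232}, recovering $Q$ from $(C,\beta)$ by the restriction isomorphism on quadrics, then the theta characteristic $\kappa_{D_L}$ on $C\cup Q$, and finally arguing that the block structure of $\kappa_{D_L}$ forces the matrix \eqref{M} to be block diagonal so that $\tau$ comes along for free. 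The paper's Theorem \ref{reconstruction} explicitly considers and rejects this route: to use the general correspondence one must first verify that the resulting cover of the nodal quintic is odd with $h^0(\kappa_{D_L})=1$, and then separately show that the cubic threefold it produces actually carries a non-Eckardt involution fixing $L$ pointwise. Instead the paper constructs $X$ directly by writing down Equation \eqref{X} from the data $(\ell_1,\ell_2,\ell_3,g)$ of the pair $(C\subset\bP^2,Q\subset\bP^2)$, which makes the involution manifest and reduces smoothness of $X$ to smoothness of $C$ and transversality of $Q$. Your version is workable, but the claims you state tersely --- that $\kappa_{D_L}$ is ``intrinsic'' to $(C,Q)$ with $h^0=1$, and that its product shape forces a block-diagonal minimal resolution hence the involution \eqref{eqn:tau} --- are exactly the verifications the paper's direct construction is designed to sidestep, and they would need to be written out (e.g.\ $\kappa_{D_L}\cong\iota_{C*}\calO_C\oplus\iota_{Q*}\theta_Q$ with zero gluing at the nodes, so $h^0=1+0$ and the resolution splits). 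Each approach buys what you would expect: yours stays closer to the general machinery, the paper's is more self-contained and hands you $\tau$ and the pointwise fixed line $L$ without further argument.
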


	Let us briefly outline the strategy. Let $(X,\tau)$ be a smooth cubic threefold with an involution of non-Eckardt type. We have shown in \S\ref{projX1} that projecting $X$ from the unique pointwise fixed line $L\subset X$ (as in Lemma \ref{inv lines}) determines a double cover $\pi:\widetilde{C}\rightarrow C$ which is the restriction of the discriminant double cover $\pi_L:\widetilde{D}_L=\widetilde{C}\cup \widetilde{Q}\rightarrow D_L=C\cup Q$ to the smooth cubic component $C$. Equivalently, the fibration in conics $\pi_L:\mathrm{Bl}_LX\rightarrow \bP^2$ gives a quadruple $(C,\beta,\calL,s)$ consisting of the branch divisor $\beta=C\cap Q$, the line bundle $\calL=\calO_C(1)$ (see Proposition \ref{thetachar}) associated to the double cover and a section $s=\ell_1(x_0,x_1,x_2)\ell_2(x_0,x_1,x_2)-\ell_3^2(x_0,x_1,x_2)$ (see Equation \eqref{rewrite eqn}) of $\calO_C(\beta)$ vanishing on $\beta=C\cap Q$. Conversely, we will prove in \S\ref{reconstructX} that $(X,\tau)$ can be reconstructed (up to projective equivalence) from the double cover $\pi:\widetilde{C}\rightarrow C$, or equivalently, from the quadruple $(C,\beta,\calL,s)$. Note that here we can also view $C$ as a smooth genus $1$ curve whose embedding into $\bP^2$ as a smooth cubic curve is given by the linear system $|\calL|=|\calO_C(1)|$ (such that the branch divisor $\beta$ lies on a quadratic curve $Q$). In particular, our reconstruction of the cubic threefold $(X,\tau)$ with a non-Eckardt type involution can be thought of as a degenerate case of \cite[\S1.6]{MR472843} and \cite[Prop.~4.2]{MR2123232}. The other key ingredient, which we will recall in \S\ref{injprym}, is the injectivity of the Prym map $\calP_{1,6}:\calR_{1,6}\rightarrow \calA_3^{(1,1,2)}$ for double covers of smooth genus $1$ curves ramified in six distinct points due to Ikeda \cite{MR4156417} and Naranjo and Ortega \cite{NO_torelli}. The proof of Theorem \ref{global torelli} will be completed in \S\ref{pftorelli} using Theorem \ref{inv/antiinv part1} and the above results. The infinitesimal Torelli theorem will be discussed in \S\ref{inftorellinonEc}. 
	
	\subsection{Reconstructing cubic threefolds with a non-Eckardt type involution} \label{reconstructX} We keep notation as in \S\ref{projX1}. From \cite[\S1.6]{MR472843} and \cite[Prop.~4.2]{MR2123232} (see also \S\ref{general conic fib}), we know that the data of a smooth cubic threefold together with a line determines the data of a stable plane quintic curve and an odd theta characteristic (up to projective equivalence), and vice versa. We have a similar result for cubic threefolds with a non-Eckardt type involution.
	
\begin{theorem}[Reconstructing a cubic threefold with a non-Eckardt type involution from a ramified double cover of a genus $1$ curve] \label{reconstruction}
	Given a double cover $\widetilde{C}\rightarrow C$ of a smooth genus $1$ curve $C$ branched in six distinct points, one can associate a smooth cubic threefold $(X,\tau)$ with an involution of non-Eckardt type, such that $\widetilde{C}\rightarrow C$ is obtained by  projecting $X$ from the unique pointwise fixed line $L\subset X$ (i.e.~ we restrict the discriminant double cover of the conic fibration $\pi_L:\mathrm{Bl}_LX\rightarrow \Pi\cong \bP^2$ to the smooth cubic component).
\end{theorem}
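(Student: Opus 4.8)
The plan is to invert, in the degenerate situation at hand, the dictionary of \cite[\S1.6]{MR472843} and \cite[Prop.~4.2]{MR2123232} between smooth cubic threefolds equipped with a line and stable plane quintics equipped with an odd theta characteristic. Starting from the double cover $\pi\colon\widetilde C\to C$ I would first recover the quadruple $(C,\beta,\calL,s)$ of \S\ref{projX1}: the branch divisor $\beta\subset C$ consists of six distinct points, $\calL$ is the line bundle with $\pi_*\calO_{\widetilde C}\cong\calO_C\oplus\calL^{-1}$, so that $\calL^{\otimes2}\cong\calO_C(\beta)$ has degree $6$, whence $\deg\calL=3$, and $s\in H^0(C,\calL^{\otimes2})$ is a section vanishing on $\beta$, unique up to scalar. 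Since $C$ has genus $1$, the degree-$3$ bundle $\calL$ is very ample with $h^0(C,\calL)=3$ and embeds $C\hookrightarrow\bP^2$ as a smooth plane cubic $V(g)$; because the restriction map $H^0(\bP^2,\calO_{\bP^2}(2))\to H^0(C,\calL^{\otimes2})$ is an isomorphism (both sides have dimension $6$, with kernel $H^0(\bP^2,\calO_{\bP^2}(-1))=0$), there is a unique conic $Q=V(q)$ with $C\cap Q=\beta$, and since $\beta$ is reduced the intersection $C\cap Q$ is transverse. Choosing a symmetric determinantal presentation $q=\ell_1\ell_2-\ell_3^2$ of the conic (possible over $\bC$ for any conic, degenerate or not), I would set
\[F:=\ell_1x_3^2+\ell_2x_4^2+2\ell_3x_3x_4+g,\qquad X:=V(F)\subset\bP^4,\qquad \tau\colon[x_0:\dots:x_4]\mapsto[x_0:x_1:x_2:-x_3:-x_4].\]
Then $L=V(x_0,x_1,x_2)\subset X$, the involution $\tau$ preserves $X$, and since $\tau$ acts on $\bP^4$ by a diagonal involution with three $+1$-eigenvalues and two $-1$-eigenvalues, it is of non-Eckardt type (case (2) of Proposition \ref{tau1tau2}), with pointwise fixed line $L$ and fixed plane cubic $C=X\cap\Pi$.

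The one nontrivial point, which I expect to be the main obstacle, is to show that $X$ is smooth; this is exactly where the hypotheses ($C$ smooth, $\beta$ reduced) are used. Let $A$ denote the symmetric $2\times2$ matrix of linear forms with $\det A=q$. A singular point of $X$ lying on $\Pi=\{x_3=x_4=0\}$ would force a singular point of $C=V(g)$, which is excluded; for a singular point $p$ off $\Pi$ with image $\bar p\in\bP^2_{x_0,x_1,x_2}$, the vanishing of $\partial F/\partial x_3$ and $\partial F/\partial x_4$ forces $(x_3,x_4)\in\ker A(\bar p)$, hence $q(\bar p)=0$, and then $F=0$ gives $g(\bar p)=0$, so $d:=\bar p$ lies in $C\cap Q=\beta$. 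Since $Q$ is smooth at such a point $d$ (the intersection being transverse there, while $C$ is smooth), $A(d)$ has corank exactly $1$, so the point of $X$ over $d$ is unique; writing $A(d)=\lambda ww^{\mathsf T}$ one computes that the remaining partials $\partial F/\partial x_j$ ($j=0,1,2$) at that point assemble into the sum of the linear form defining $T_dQ$ and the linear form defining $T_dC$. Hence $X$ is singular over $d$ if and only if $T_dQ=T_dC$, i.e.\ $C$ and $Q$ are tangent at $d$, contradicting transversality; so $X$ is smooth. (One could instead invoke Beauville's reconstruction after verifying that $D_L=C\cup Q$ is stable and that $\kappa_{D_L}$ has $h^0=1$, but the direct computation seems cleanest.)

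It then remains to check that the construction is independent of the choices up to projective equivalence and that it inverts the projection. The embedding $C\hookrightarrow\bP^2$ is canonical up to $\PGL_3$ acting on $[x_0:x_1:x_2]$, the conic $Q$ and the cubic $g$ are unique up to scalar, and any two symmetric determinantal presentations of $q$ differ by a change of the coordinates $[x_3:x_4]$ and a rescaling absorbed into $(x_3,x_4)$; all these ambiguities lie in $\PGL_5$, so $(X,\tau)$ is well defined. Finally, by the computations of \S\ref{projX1} the projection $\pi_L\colon\mathrm{Bl}_LX\to\Pi$ has discriminant quintic $V(\det M)=C\cup Q$ with $M$ the matrix of Equation \eqref{M}, and by Proposition \ref{thetachar} (equivalently Proposition \ref{proj X 1}) restricting the discriminant double cover to $C$ gives the double cover of $C$ branched along $C\cap Q=\beta$ with associated line bundle $\calO_C(1)=\calL$; since a branched double cover is determined by its branch locus, its line bundle and a section of the square of that bundle vanishing on the branch locus (unique up to scalar), this reconstructed cover is precisely $\pi\colon\widetilde C\to C$.
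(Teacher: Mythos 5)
Your construction is the same as the paper's: recover the quadruple $(C,\beta,\calL,s)$ from the cover, embed $C$ by $|\calL|$, produce the unique transverse conic $Q$ with $Q\cap C=\beta$ via the isomorphism $H^0(\bP^2,\calO_{\bP^2}(2))\cong H^0(C,\calO_C(\beta))$, write $Q$ as a symmetric $2\times 2$ determinant $\ell_1\ell_2-\ell_3^2$, define $X$ and $\tau$ by the same equation, and identify the resulting discriminant cover via Proposition \ref{thetachar}. Your verification of smoothness away from $L$ is correct and in fact more explicit than the paper's (the corank and tangent-line computation over the points of $\beta$ is a nice refinement), and your remarks on well-definedness up to $\PGL_5$ are fine.

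There is, however, one missing case in the smoothness check. Your two cases --- singular points on $\Pi=V(x_3,x_4)$, and singular points off $\Pi$ ``with image $\bar p\in\bP^2_{x_0,x_1,x_2}$'' --- do not cover the points of $L=V(x_0,x_1,x_2)$ itself, which lie off $\Pi$ but have no image under the projection from $L$ (and for which $A(\bar p)$ is the zero matrix, so the argument ``$(x_3,x_4)\in\ker A(\bar p)$, hence $q(\bar p)=0$'' gives nothing). The case is not vacuous: at a point $[0,0,0,p_3,p_4]$ all of $\ell_i$, $g$ and $\partial g/\partial x_j$ vanish, the partials $\partial F/\partial x_3,\partial F/\partial x_4$ vanish automatically, and $X$ is singular there exactly when the linear form $p_3^2\ell_1+2p_3p_4\ell_3+p_4^2\ell_2$ is identically zero. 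Normalizing $p_3=1$ this gives $\ell_1=-2p_4\ell_3-p_4^2\ell_2$, hence $q=\ell_1\ell_2-\ell_3^2=-(\ell_3+p_4\ell_2)^2$, i.e.\ $Q$ is non-reduced --- contradicting the reducedness of $Q$ that you already established from $\beta$ being six distinct points. This is precisely how the paper disposes of the case $a\in L$ (reducing to $a=[0,0,0,1,0]$); adding these two lines closes the gap.
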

\begin{proof}
	The first observation is that a double cover $\widetilde{C}\rightarrow C$ of a smooth genus $1$ curve $C$ branched in six distinct points determines an admissible double cover $\widetilde{D}\rightarrow D$ of a nodal plane quintic $D$ containing $C$ as a component. More precisely, the data of a double cover $\widetilde{C}\rightarrow C$ of a smooth genus $1$ curve $C$ ramified in six distinct points can be described equivalently as a quadruple $(C,\beta,\calL,s)$ where $\beta$ is the branch divisor on $C$ consisting of six distinct points, $\calL$ is a line bundle on $C$ with $\calL^{\otimes 2}\cong \calO_C(\beta)$ and $s$ is a section of $\calO_C(\beta)$ vanishing on $\beta$. We claim that the data of such a quadruple $(C,\beta,\calL,s)$ are equivalent to the data of a pair $(C\subset \bP^2, Q\subset \bP^2)$ recording the embedding of a smooth cubic $C$ and a transverse conic $Q$ in $\bP^2$. Indeed, the line bundle $\calL$ is very ample (since $\deg\calL \geq 2g(C)+1$), and hence defines an embedding $C\hookrightarrow \bP^2$ of $C$ as a plane cubic,  such that $\calO_{\bP^2}(1)|_C\cong \mathcal L$. From the identification $H^0(\mathbb P^2,\mathcal O_{\mathbb P^2}(2))\stackrel{\sim}{\rightarrow}H^0(C,\mathcal O_{\mathbb P^2}(2)|_C)= H^0(C,\mathcal O_C(\beta))$, we see that there is a unique conic $Q$ in the plane so that $Q\cap C=\beta$.  Since $\beta$ consists of six distinct points, $Q$ is reduced. In summary, we let $D:=C\cup Q$ which is a nodal plane quintic, and set $\widetilde{D}\rightarrow D$ to be the unique double cover branched at the nodes of $D$, such that the restriction to $C$ is the cover $\widetilde C\rightarrow C$ (note that, since $Q$ is rational, only one irreducible double cover of it exists, branched at $C\cap Q$).

	If we knew that the cover $\widetilde{D}\rightarrow D$ were odd, then \cite[\S1.6]{MR472843} (see also \cite[Prop.~4.2]{MR2123232}) would provide a smooth cubic threefold $X$ and a line $L\subset X$ so that projection from $L$ gave the discriminant cover $\widetilde{D}\rightarrow D$.  However, we would still need to show that $X$ was a cubic threefold with non-Eckardt type involution, and that $L$ was the unique pointwise fixed line.  

	For this reason, we instead construct directly the smooth cubic threefold $X$ with a non-Eckardt type involution $\tau$ so that projection from the pointwise fixed line gives the desired cover. To begin, since the rank of $Q$ is $2$ or $3$, it is given by  
	\[\det\left(\begin{matrix}
	\ell_1(x_0,x_1, x_2) & \ell_3(x_0,x_1,x_2)\\
     	\ell_3(x_0,x_1,x_2) &\ell_2(x_0, x_1, x_2)
	\end{matrix}\right)=0\]
for some choice of coordinates on $\mathbb P^2$ and some linear forms $\ell_i(x_0,x_1,x_2)$. Let $g(x_0,x_1,x_2)=0$ be the equation for $C$. Take $X$ to be the cubic threefold in $\bP^4$ given by 
\begin{equation} \label{X}
	\ell_1(x_0,x_1,x_2)x_3^2+\ell_2(x_0,x_1,x_2)x_4^2+2\ell_3(x_0,x_1,x_2)x_3x_4+g(x_0,x_1,x_2)=0
\end{equation} 
which admits an involution $\tau:[x_0,x_1,x_2,x_3,x_4]\mapsto [x_0,x_1,x_2,-x_3, -x_4]$ of non-Eckardt type. Let $L:=V(x_0,x_1,x_2)$ and $\Pi:=V(x_3,x_4)\cong \bP^2$. Taking the determinant of the matrix in Equation \eqref{E:Mdef}, one has that $D= C\cup Q$ is the discriminant curve of the conic fibration obtained by projecting $X$ from $L$ (compare \S\ref{projX1}). 

	It remains to show that the cubic threefold $X$ is smooth, and that the associated discriminant double cover obtained by projection from $L$ is the cover $\widetilde{D}\rightarrow D$ constructed above. Assuming $X$ is smooth, the latter assertion is Proposition \ref{thetachar}. The smoothness of $X$ will follow from the fact that the cubic $C\subset \bP^2$ is smooth and the conic $Q$ intersects $C$ transversely. Similar arguments are made in the proof of \cite[Prop.~4.2]{MR2123232}. Specifically, suppose that $a=[a_0,a_1,a_2,a_3,a_4]$ is a singular point of $X$, and consider first the case $a\not\in L=V(x_0,x_1,x_2)$. After a change of coordinates fixing $x_0,x_1,x_2$, we may assume that $a_3=a_4=0$. Analyzing the partial derivatives of Equation \eqref{X}, we see that the cubic $C$ is singular at the point $[a_0,a_1,a_2]$, which is a contradiction. Next, let us consider the case when the point $a$ lies in the fixed line $L$. We may assume that $a=[0,0,0,1,0]$. A direct calculation using Equation \eqref{X} shows that the conic $Q$ is non-reduced, which is absurd. 	
\end{proof}

	\subsection{Injectivity of the Prym map for double covers of genus $1$ curves branched in six points} \label{injprym} Consider the Prym map $\calP_{g,2n}:\calR_{g,2n}\rightarrow \calA_{g-1+n}^{(1,\dots,1,2,\dots,2)}$ (with $2$'s repeated $g$ times); recall that $\calR_{g,2n}$ is the moduli space of double covers of smooth genus $g$ curves branched in $2n$ distinct points, and $\calA_{g-1+n}^{(1,\dots,1,2,\dots,2)}$ is the moduli space of abelian varieties of dimension $g-1+n$ with a polarization of type $(1,\dots,1,2,\dots,2)$. In \cite{MR4156417}, Ikeda studies double covers of elliptic curves and proves the injectivity of the Prym map $\calP_{1,2n}$ with $n\geq 3$. More generally, Naranjo and Ortega prove the Prym-Torelli theorem for $\calP_{g,2n}$ with $g>0$ and $n\geq 3$ in \cite{NO_torelli}.
	 
\begin{theorem}[\cite{MR4156417} Theorem 1.2; \cite{NO_torelli} Theorem 1.1] \label{injective for double covers}
	If $n\geq 3$, then the Prym map $\calP_{1,2n}:\calR_{1,2n}\rightarrow \calA_{n}^{(1,\dots,1,2)}$ is injective. In particular, $\calP_{1,6}:\calR_{1,6}\rightarrow \calA_3^{(1,1,2)}$ is injective.
\end{theorem}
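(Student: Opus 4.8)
The plan is to recover the cover $\pi\colon\widetilde C\to C$ from its polarized Prym $(P,\Xi):=\calP_{1,2n}([\pi])$, in two stages: first show the differential of $\calP_{1,2n}$ is injective everywhere, then upgrade this to global injectivity by a degeneration argument. Encode a point of $\calR_{1,2n}$ as a quadruple $(C,B,\eta,s)$ with $C$ smooth of genus $1$, $B$ a reduced divisor of degree $2n$, $\eta^{\otimes2}\cong\calO_C(B)$, and $s\in H^0(C,\calO_C(B))$ vanishing on $B$; write $\iota$ for the covering involution of $\widetilde C$, so $\widetilde C$ has genus $n+1$ and $P=P(\widetilde C,C)$ is an abelian $n$-fold with $\Xi$ of type $(1,\dots,1,2)$.

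First, for the infinitesimal statement, I would use the identifications $H^{1,0}(P)\cong H^0(C,\omega_C\otimes\eta)$ and, via deformation theory of $(C,B)$ together with Serre duality, $T^\vee_{[\pi]}\calR_{1,2n}\cong H^0(C,\omega_C^{\otimes2}\otimes\eta^{\otimes2})=H^0(C,\calO_C(B))$. The ramified analogue of the computation of the codifferential of the Prym map in \cite{MR0379510} then identifies $d\calP_{1,2n}^\vee$ with the multiplication map $\Sym^2 H^0(C,\omega_C\otimes\eta)\to H^0(C,\omega_C^{\otimes2}\otimes\eta^{\otimes2})$. Since $\deg\eta=n\ge3$, the bundle $\omega_C\otimes\eta\cong\eta$ is very ample and embeds $C$ as a projectively normal elliptic normal curve of degree $n$ in $\bP^{n-1}$ (a plane cubic when $n=3$), so this multiplication map is surjective; for $n=3$ it is even an isomorphism of $6$-dimensional spaces, obtained by restricting quadrics from $\bP^2$. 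Hence $d\calP_{1,2n}$ is injective at every point of $\calR_{1,2n}$, so $\calP_{1,2n}$ is unramified; for $n=3$, where source and target are both $6$-dimensional, $\calP_{1,6}$ is even étale, hence has open image.

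The hard part will be to pass from injectivity of the differential to injectivity of $\calP_{1,2n}$, i.e.\ to rule out non-isomorphic covers with isomorphic polarized Pryms, since an unramified morphism of non-proper spaces need not be injective. Following \cite{NO_torelli} (and \cite{MR4156417} in the elliptic case), I would extend $\calP_{1,2n}$ to a proper morphism $\overline{\calR}_{1,2n}\to\overline{\calA}_n^{(1,\dots,1,2)}$ over compactifications by admissible covers and toroidal data, and analyze its fibres over the boundary: degenerating the elliptic base to a one-nodal rational curve and normalizing relates boundary fibres of $\calP_{1,2n}$ to fibres of Prym maps $\calP_{0,2m}\colon\calR_{0,2m}\to\calA_{m-1}$ for branched double covers of $\bP^1$, the base of the induction being $\calP_{0,6}$, which under the identification $\calR_{0,6}\cong\mathcal M_2$ is the Torelli morphism to $\calA_2$ and is injective classically. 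One then transports injectivity from the boundary into the interior using properness of the extended map together with the everywhere-injectivity of the differential. (An alternative line is to reconstruct $\pi$ directly from $(P,\Xi)$: the Abel--Prym curve, the image of $\widetilde C$ in $P$ under $x\mapsto\calO_{\widetilde C}(x-\iota x)$, generates $P$, and recovering it intrinsically inside $(P,\Xi)$ via the theory of minimal-class subvarieties of abelian varieties would yield $C=\widetilde C/\iota$, the branch locus $B$, and $\eta$; but controlling this curve for the non-principal polarization $\Xi$ is itself the delicate point.) In either approach the serious obstacle is the boundary analysis, respectively the geometry of the Abel--Prym curve; the infinitesimal step is essentially the cohomological computation above, and it is there that the hypothesis $n\ge3$ (equivalently $\deg\eta\ge3$, so that the elliptic normal curve is projectively normal) is used.
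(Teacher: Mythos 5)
This statement is not proved in the paper at all: it is quoted from Ikeda \cite{MR4156417} and Naranjo--Ortega \cite{NO_torelli}, and the surrounding text only reviews their arguments. Both published proofs reconstruct the cover directly from the polarized Prym $(P,\Xi)$ through the linear system $|\Xi|$ --- Ikeda via the Gauss map of a distinguished $\Sigma_0\in|\Xi|$ restricted to the base locus of $|\Xi|$, generalizing Andreotti's proof of Torelli for hyperelliptic curves, and Naranjo--Ortega via a birational model of the base locus of $|\Xi|$, from which they recover a Brill--Noether locus in $\Pic^0(\widetilde C)$ to which Martens' generalized Torelli theorem applies. So your main plan (infinitesimal injectivity plus degeneration) is a genuinely different route, and only your parenthetical ``alternative line'' (recovering the Abel--Prym curve inside $(P,\Xi)$) is close in spirit to what is actually done.

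Your infinitesimal step is correct and standard: since $\omega_C\cong\calO_C$, the codifferential is the multiplication map $\Sym^2H^0(C,\eta)\to H^0(C,\eta^{\otimes2})$, surjective for $\deg\eta=n\geq 3$ by projective normality of elliptic normal curves, so $\calP_{1,2n}$ is everywhere unramified (and $\calP_{1,6}$ is \'etale between $6$-dimensional spaces). The genuine gap is the global step. ``Transporting injectivity from the boundary into the interior using properness together with everywhere-injectivity of the differential'' is not a valid principle: an everywhere-unramified morphism that is even generically injective need not be injective (the normalization map of a nodal plane cubic is unramified and birational onto its image yet identifies two points); the extension of the Prym map to compactifications by admissible covers and toroidal boundary components is not finite, so fibres over boundary points do not control fibres over interior points; and at best a degeneration argument computes the degree of $\calP_{1,2n}$ onto its image, which would yield only \emph{generic} injectivity --- insufficient for the use made of this theorem in the proof of Theorem \ref{global torelli}, which requires injectivity at every point of $\calR_{1,6}$. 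To close the gap you need the actual content of the cited papers: an intrinsic reconstruction of $\widetilde C\to C$ from the polarized abelian variety $(P,\Xi)$, which both references extract from the geometry of $|\Xi|$ (its base locus and Gauss maps), not from a boundary analysis.
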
	

	Let us very briefly review the proof of the above theorem following \cite{MR4156417} and \cite{NO_torelli}. Let $(P(\widetilde{C},C), \Xi)$ denote the Prym variety for a double cover $\widetilde{C}\rightarrow C$ of a smooth genus $1$ curve $C$ branched in $2n$ distinct points. For any member $\Sigma\in |\Xi|$, Ikeda studies the Gauss map $\Psi_\Sigma$ (more precisely, the branch locus of the restriction $\Psi_\Sigma|_{\mathrm{BS}|\Xi|}$ of $\Psi_\Sigma$ to the base locus $\mathrm{BS}|\Xi|$ of $|\Xi|$), and uses the information to specify a divisor $\Sigma_0\in |\Xi|$ which allows him to reconstruct the double cover $\widetilde{C}\rightarrow C$. Ikeda's proof is a generalization of the proof of the Torelli theorem for hyperelliptic curves due to Andreotti (cf.~\cite{MR102518}). The approach of Naranjo and Ortega is different and relies on the description of the base locus of $|\Xi|$ given in \cite{MR3896124} and on a generalized Torelli theorem proved by Martens in \cite{MR182624}. To be specific, Naranjo and Ortega recover a certain Brill-Noether locus on $\Pic^0(\widetilde{C})$ together with an involution through a birational model of the base locus $\mathrm{BS}|\Xi|$; by Martens' generalized Torelli, the Brill-Noether locus determines the double cover $\widetilde{C}\rightarrow C$. More generally, Naranjo and Ortega's argument can be used to prove the injectivity of the Prym map $\calP_{g,2n}$ with $g>0$ and $n\geq 3$.  

	\subsection{Proving global Torelli for cubic threefolds with a non-Eckardt type involution} \label{pftorelli}
\begin{proof}[Proof of Theorem \ref{global torelli}]
	Let $(X_1, \tau_1)$ and $(X_2,\tau_2)$ be two cubic threefolds with involutions of non-Eckardt type.  We will prove that if the invariant parts of the intermediate Jacobian are isomorphic to each other $JX_1^{\tau_1}\cong JX_2^{\tau_2}$, then $(X_1,\tau_1)$ is projectively equivalent to $(X_2,\tau_2)$. Recall from Proposition \ref{proj X 1} that each involution $\tau_i$ fixes pointwise a line $L_i\subset C_i$, such that when projecting $X_i$ from $L_i$ we get a double cover $\pi_i:\widetilde{C}_i\rightarrow C_i$ in $\calR_{1,6}$. To be more precise, the double cover $\widetilde{C}_i\rightarrow C_i$ is obtained by projecting $X_i$ from $L_i$ and restricting the discriminant pseudo-double cover $\pi_i:\widetilde{C}_i\cup\widetilde{Q}_i\rightarrow C_i\cup Q_i$ over the smooth cubic component $C_i$. By Theorem \ref{inv/antiinv part1}, we have $JX_i^{\tau_i}\cong (P(\widetilde{C}_i, C_i))^\vee$ as polarized abelian varieties which implies that $P(\widetilde{C}_1, C_1)\cong P(\widetilde{C}_2, C_2)$ (see also \cite[Thm.~3.1]{MR1976843}). From Theorem \ref{injective for double covers}, we know that the two double covers $\pi_i:\widetilde{C}_i\rightarrow C_i$ are equivalent. By Theorem \ref{reconstruction}, we can reconstruct $(X_i,\tau_i)$ from the data of the double cover $\widetilde{C}_i\rightarrow C_i$ branched in six points. It follows that the cubic threefolds $(X_1,\tau_1)$ and $(X_2,\tau_2)$ are projectively equivalent.	 
\end{proof}

	\subsection{Infinitesimal Torelli for cubic threefolds with an involution of non-Eckardt type} \label{inftorellinonEc} Recall that $\calM$ is the moduli space of cubic threefolds $(X,\tau)$ with an involution of non-Eckardt type. Let us consider the open subset $\calM_0\subset \calM$ parametrizing those $(X,\tau)$ satisfying that when projecting $X$ from the unique fixed line $L\subset X$ (as in \S\ref{projX1}), the conic component $Q$ of the discriminant curve $D_L$ is smooth. More concretely, the equation of a member in the complement $\calM\backslash\calM_0$ can be written as Equation \eqref{rewrite eqn} with $\ell_3(x_0,x_1,x_2)\equiv 0$ (i.e.~no terms containing $x_3x_4$). 
		
\begin{proposition}[Infinitesimal Torelli for cubic threefolds with a non-Eckardt type involution] \label{inftorelli}
	Let $\calP:\calM\rightarrow \calA_3^{(1,2,2)}$ be the period map for cubic threefolds with a non-Eckardt type involution. The differential $d\calP$ is an isomorphism at every point of $\calM_0\subset \calM$. (As a consequence, $\calP|_{\calM_0}:\calM_0\rightarrow \calA_3^{(1,2,2)}$ is an open embedding.)
\end{proposition}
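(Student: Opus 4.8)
The plan is to prove infinitesimal Torelli by relating the differential $d\calP$ at a point $(X,\tau)\in\calM_0$ to the differential of the Prym map $\calP_{1,6}\colon\calR_{1,6}\to\calA_3^{(1,1,2)}$, using the isomorphism $JX^\tau\cong (P(\widetilde C,C))^\vee$ of Theorem~\ref{inv/antiinv part1} and the reconstruction dictionary of Theorem~\ref{reconstruction}. Concretely, I would first observe that over $\calM_0$ the assignment $(X,\tau)\mapsto(\pi\colon\widetilde C\to C)$, i.e.\ $(X,\tau)\mapsto$ the quadruple $(C,\beta,\calL,s)$ with $\calL=\calO_C(1)$, defines a morphism $\calM_0\to\calR_{1,6}$, and by Theorem~\ref{reconstruction} this morphism is injective; in fact I would argue it is an isomorphism of $\calM_0$ onto a locally closed subvariety $\calR^0\subset\calR_{1,6}$ (the locus where the six branch points lie on a smooth conic under the embedding $C\hookrightarrow\bP^2$ given by $|\calL|$), using that the reconstruction is inverse to projection. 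Since both $\calM_0$ and $\calR_{1,6}$ have dimension $6$, this map is \'etale, hence an open immersion onto its image after possibly shrinking; the key point is then that $\calP|_{\calM_0}$ factors (up to the dualization isomorphism $\calA_3^{(1,1,2)}\xrightarrow{\sim}\calA_3^{(1,2,2)}$, which is an isomorphism of moduli spaces) as $\calM_0\xrightarrow{\sim}\calR^0\hookrightarrow\calR_{1,6}\xrightarrow{\calP_{1,6}}\calA_3^{(1,1,2)}$.

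The second step is to promote the injectivity of $\calP_{1,6}$ (Theorem~\ref{injective for double covers}) to the statement that $d\calP_{1,6}$ is an isomorphism at the relevant points. This does not follow formally from set-theoretic injectivity, so I would instead invoke the infinitesimal study of the Prym map for ramified covers: the codifferential of $\calP_{g,2n}$ is described by a multiplication map on sections of (twisted) canonical bundles on $\widetilde C$, and for $n\ge 3$, $g\ge 1$ this map is surjective away from a locus of covers of special type (analogous to the hyperelliptic locus for the classical Prym map, as flagged in Remark~\ref{M0hyperelliptic}). I would check that the covers coming from $\calM_0$ avoid this bad locus — this is where the smoothness of $Q$ (equivalently $\ell_3\not\equiv 0$ together with nondegeneracy) is used — so that $d\calP_{1,6}$ has maximal rank along $\calR^0$. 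Combining with the fact that $\dim\calR_{1,6}=\dim\calA_3^{(1,1,2)}=6$, the differential $d\calP_{1,6}$ is an isomorphism there, and pulling back along the isomorphism $\calM_0\xrightarrow{\sim}\calR^0$ gives that $d\calP$ is an isomorphism at every point of $\calM_0$. The last sentence of the proposition then follows: $\calP|_{\calM_0}$ is injective by Theorem~\ref{global torelli} and \'etale, hence an open embedding.

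I expect the main obstacle to be the infinitesimal input on the Prym side, namely verifying that $d\calP_{1,6}$ is an isomorphism (not merely that $\calP_{1,6}$ is injective) along the locus $\calR^0$, and pinning down precisely which covers are excluded. The cleanest route may be a direct Hodge-theoretic computation of $d\calP$ on $\calM$ via Griffiths residues: the tangent space to $\calM$ at $(X,\tau)$ is the $\tau$-invariant part of $H^1(X,T_X)$, and $d\calP$ is identified with cup product into $\mathrm{Hom}\big(H^{2,1}(X)^{-\tau},H^{1,2}(X)^{-\tau}\big)$ (the relevant Hodge pieces of $JX^\tau$), so infinitesimal Torelli becomes surjectivity of a symmetrizer/multiplication map on explicit graded pieces of the Jacobian ring of $F$ in the coordinates of Equation~\eqref{rewrite eqn}. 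With the shape of $F$ so restricted, this is a finite-dimensional linear-algebra statement; the condition $\ell_3\not\equiv0$ (i.e.\ membership in $\calM_0$) should be exactly what makes the relevant multiplication map surjective, and I would verify it by a Macaulay-type argument on the Jacobian ideal, reducing to a generic member as in the proof of Lemma~\ref{JXsplit}.
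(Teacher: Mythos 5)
Your primary route has a genuine gap at exactly the point you flag: you need that $d\calP_{1,6}$ is an isomorphism at the points of $\calR_{1,6}$ coming from $\calM_0$, and neither Theorem \ref{injective for double covers} nor any result you cite supplies this. Set-theoretic injectivity of $\calP_{1,6}$ cannot, and in fact the differential genuinely degenerates: since the projection/reconstruction correspondence identifies $\calM$ with $\calR_{1,6}$ (up to the dualization isomorphism $\calA_3^{(1,1,2)}\cong\calA_3^{(1,2,2)}$), Remark \ref{M0hyperelliptic} shows that $d\calP_{1,6}$ acquires a $1$-dimensional kernel along the divisor where the six branch points lie on a \emph{singular} conic, even though $\calP_{1,6}$ is injective there. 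So "check that the covers coming from $\calM_0$ avoid the bad locus'' is not a verification one can outsource to an analogy with the hyperelliptic locus: identifying the degeneracy locus of $d\calP_{1,6}$ with the singular-conic divisor is essentially the whole content of the proposition, and the paper proves it by a direct computation rather than by quoting an infinitesimal Prym--Torelli statement (which, in the needed form, is not in the cited literature).

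Your fallback sketch is indeed the paper's actual method (Griffiths residues, Jacobian ring, Macaulay duality), but as written it has an eigenspace error and stops before the decisive step. The tangent space of $JX^\tau$ is the $\tau$-\emph{invariant} part $H^{1,2}(X)^\tau\cong (R_F^4)^\tau$, not the anti-invariant part: the anti-invariant pieces are $2$-dimensional and belong to $JX^{-\tau}$, and with them your target $\Sym^2$ would be $3$-dimensional, incompatible with $\dim T\calA_3^{(1,2,2)}=6$. The paper identifies $d\calP_{(X,\tau)}$ with the cup product $(R_F^3)^\tau\rightarrow \Sym\Hom((R_F^1)^\tau,(R_F^4)^\tau)$, dualizes via Macaulay to the codifferential $\Sym^2((R_F^1)^\tau)\rightarrow (R_F^2)^\tau$, and then performs the computation you only predict: the kernel is $J_F^2\cap\Sym^2\bC[x_0,x_1,x_2]$, which is nonzero precisely when $\ell_1,\ell_2,\ell_3$ have a common zero, i.e.\ precisely when the conic $Q$ is singular; together with $\dim\Sym^2((R_F^1)^\tau)=\dim(R_F^2)^\tau=6$ this gives the isomorphism on $\calM_0$. (Also note that membership in $\calM_0$ is the smoothness of $Q$, i.e.\ no common zero of $\ell_1,\ell_2,\ell_3$, which is stronger than $\ell_3\not\equiv 0$.) Until that kernel computation is actually carried out, the proof is incomplete.
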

\begin{proof}
	The infinitesimal computation is quite similar to that of \cite[\S5.1]{MR4313238} (see also \cite[\S8.1, \S8.3]{MR3727160}). Specifically, let $(X,\tau)$ be a smooth cubic threefold with an involution $\tau$ of non-Eckardt type cut out by $F=0$ as in Equation \eqref{eqn:X}. On one side, we have: 
	\[T_{(X,\tau)}\calM=(R_F^3)^\tau\] where $R_F^i$ ($i\geq 0$) denotes degree $i$ part of the Jacobian ring of $F$ and the superscript means taking the $\tau$-invariant subspace. On the other side, we also have: 
	\[T_{JX^\tau}\calA_3^{(1,2,2)}=\Sym\Hom(H^{2,1}(X)^\tau, H^{1,2}(X)^\tau)=\Sym\Hom((R_F^1)^\tau, (R_F^4)^\tau).\]  
Furthermore, the differential $d\calP_{(X,\tau)}$ of the period map $\calP: \calM \rightarrow \calA_3^{(1,2,2)}$ can be identified with the following map, which is induced by cup product: 
	\[(R_F^3)^\tau \rightarrow \Sym\Hom((R_F^1)^\tau, (R_F^4)^\tau).\] 	 
Using Macaulay's theorem to identify $R_F^4$ with $(R_F^1)^\vee$, the differential is 
	\[d\calP_{(X,\tau)}:(R_F^3)^\tau \rightarrow \Sym^2((R_F^1)^\tau)^\vee\] 
and the codifferential is 
	\[d\calP^*_{(X,\tau)}:H^0(\bP^2_{x_0,x_1,x_2}, \calO(2))\cong \Sym^2((R_F^1)^\tau) \rightarrow (R_F^2)^\tau.\] The kernel of the codifferential $d\calP^*_{(X,\tau)}$ is $J_F^2\cap \Sym^2\bC[x_0,x_1,x_2]$, where $J_F^2$ denotes degree $2$ part of the Jacobian ideal. Now we suppose that $(X,\tau)\in \calM_0$. Rewrite the equation $F$ of $X$ in the form of Equation \eqref{rewrite eqn}
	\[\ell_1(x_0,x_1,x_2)x_3^2+\ell_2(x_0,x_1,x_2)x_4^2+2\ell_3(x_0,x_1,x_2)x_3x_4+g(x_0,x_1,x_2)=0.\] 
A direct calculation then shows that $J_F^2\cap \Sym^2\bC[x_0,x_1,x_2]$ is not trivial if and only if $\ell_1(x_0,x_1,x_2)$, $\ell_2(x_0,x_1,x_2)$ and $\ell_3(x_0,x_1,x_2)$ have a common zero. However, it is impossible since otherwise the conic component $Q$ of the discriminant curve $D_L$ for the fibration in conics $\pi_L:\mathrm{Bl}_LX\rightarrow \Pi$ will be singular. Note also that $\dim\Sym^2((R_F^1)^\tau)=\dim(R_F^2)^\tau=6$. It follows that the (co)differential of the period map $\calP:\calM\rightarrow \calA_3^{(1,2,2)}$ at every point of $\calM_0\subset \calM$ is an isomorphism.   
\end{proof}

\begin{remark} \label{M0hyperelliptic}
	We remark that the (co)differential of the period map $\calP:\calM\rightarrow \calA_3^{(1,2,2)}$ has $1$-dimensional kernel at a point $(X,\tau)$ of the divisor $\calM\backslash \calM_0$ (note that a quadratic curve being degenerate is a codimension $1$ condition) and hence the infinitesimal Torelli fails. A similar phenomenon happens for the hyperelliptic locus when considering the period map for smooth curves of genus greater than $2$ (see for instance \cite[\S1]{MR756850}). We give the following explanation using \cite[\S1]{MR637511}. Observe that a member $(X,\tau)\in \calM\backslash \calM_0$ is cut out by 
	\[\ell_1(x_0,x_1,x_2)x_3^2+\ell_2(x_0,x_1,x_2)x_4^2+g(x_0,x_1,x_2)=0\]
and hence admits extra automorphisms $\sigma:x_3\mapsto -x_3$ and $\sigma':x_4\mapsto -x_4$ (note that $\tau=\sigma\circ\sigma'=\sigma'\circ\sigma$). By \cite[\S1]{MR637511} (see also \cite[\S1]{MR756850}), the differential $d\calP_{(X,\tau)}: T_{(X,\tau)}\calM\rightarrow T_{JX^\tau}\calA_3^{(1,2,2)}$ is equivariant with respect to the action of $\sigma$ (and also $\sigma'$). Using the identifications in the proof of Proposition \ref{inftorelli}, it is not difficult to see that 
$T_{JX^\tau}\calA_3^{(1,2,2)}$ is fixed by $\sigma$ while $T_{(X,\tau)}\calM$ splits as the direct sum of the $(+1)$ eigenspace $T_{(X,\tau)}^+\calM$ (which is $5$-dimensional and corresponds to infinitesimal deformations inside $\calM\backslash \calM_0$) and the $(-1)$ eigenspace $T_{(X,\tau)}^-\calM$ (which has dimension $1$ and represents infinitesimal deformations in the normal directions). As a result, the $(-1)$ eigenspace $T_{(X,\tau)}^-\calM$ must be contained in the kernel of $d\calP_{(X,\tau)}$ and therefore the infinitesimal Torelli fails. 
\end{remark}

\section{Cubic threefolds with an involution of non-Eckardt type as fibrations  in conics II: invariant lines} \label{conic fib 2}

	In this section, we aim to characterize the intermediate Jacobians $JX$ of general cubic threefolds $X$ with an involution $\tau$ of non-Eckardt type via projections from general invariant lines $l\subset X$ that are not pointwise fixed. Specifically, we project a cubic threefold $(X,\tau)$ with a non-Eckardt type involution from a $\tau$-invariant line $l$ and study the obtained conic fibration in \S\ref{projX2}. Among others, we prove that the involution of non-Eckardt type on $X$ induces an involution on the discriminant quintic curve which is generically smooth. A different description of the invariant part $JX^\tau$ and the anti-invariant part $JX^{-\tau}$ for a general $(X,\tau,l)$ is then given in \S\ref{IJ as prym 2} using the techniques developed in \cite{MR0379510}, \cite{MR1188194} and \cite{MR2218008}. We conclude by giving an application of our results in the study of the generic fiber of the Prym map $\calP_{2,4}:\calR_{2,4}\rightarrow \calA_3^{(1,2,2)}$ in \S\ref{genericfiber}. In particular, the discussion in \cite[\S4]{NO_torelli} and \cite[\S5]{MR4435960} will play a crucial role. 

	\subsection{Projecting cubic threefolds with a non-Eckardt type involution from invariant lines} \label{projX2} We keep notation as in \S\ref{geometry}. Let $(X,\tau)$ be a smooth cubic threefold with a non-Eckardt type involution as in Equations \eqref{eqn:X} and \eqref{eqn:tau}
	\[x_0q_0(x_3, x_4)+x_1q_1(x_3, x_4)+x_2q_2(x_3,x_4)+g(x_0,x_1,x_2)=0;\]
	\[\tau:[x_0,x_1,x_2,x_3,x_4]\mapsto[x_0,x_1,x_2,-x_3,-x_4].\] 
Let $L=V(x_0,x_1,x_2)\subset X$ denote the pointwise fixed line under $\tau$. When studying the projection of $X$ from $L$, it is more convenient to rewrite the above equation in the form of Equation \eqref{rewrite eqn} 
	\[\ell_1(x_0,x_1,x_2)x_3^2+\ell_2(x_0,x_1,x_2)x_4^2+2\ell_3(x_0,x_1,x_2)x_3x_4+g(x_0,x_1,x_2)=0.\]
Let $C=V(g(x_0,x_1,x_2),x_3,x_4)\subset \Pi=V(x_3,x_4)\cong \bP^2_{x_0,x_1,x_2}$ be the fixed plane section. By  Lemma \ref{inv lines}, an invariant line $l\subset X$ that is different from $L$ intersects both $L$ and $C\subset \Pi\cong \bP^2_{x_0,x_1,x_2}$. From the discussion in \S\ref{projX1} (see also the proof of Proposition \ref{prymdecomp1}), we know that $l$ is contained in a singular fiber of the projection $\pi_L:\mathrm{Bl}_LX\rightarrow \Pi$ over a point of the cubic component $C$ of the discriminant curve. In other words, denote the restriction of the discriminant double cover to $C$ by $\pi:\widetilde{C}\rightarrow C$; invariant lines in $X$ that are not pointwise fixed are then parametrized by $\widetilde{C}$ (which is a smooth curve of genus $4$).
	
	Let $l\neq L$ be an invariant line in $X$. Without loss of generality, we assume that $l$ is 	contained in the fiber of $\pi_L:\mathrm{Bl}_LX\rightarrow \Pi\cong \bP^2_{x_0,x_1,x_2}$ over $[0,0,1]\in C\subset \Pi$. Note that the equation $g(x_0,x_1,x_2)$ of $C\subset \Pi$ has no term $x_2^3$. Also, the quadratic polynomial $q_2(x_3,x_4)$ in Equation \eqref{eqn:X} can be factored as $q_2(x_3,x_4)=(\alpha x_3+\beta x_4)(\gamma x_3+\delta x_4)$ with $\alpha$, $\beta$, $\gamma$ and $\delta$ constants. Consider the fiber of $\pi_L:\mathrm{Bl}_LX\rightarrow \Pi$ over the point $[0,0,1]\in C\subset \Pi$. The equations for the invariant lines $l$ and $l'$ contained in the fiber are respectively
	\[l=V(x_0,x_1,\alpha x_3+\beta x_4);\,\,\, l'=V(x_0,x_1,\gamma x_3+\delta x_4).\]	
Let us set $z:=\alpha x_3+\beta x_4$ and apply the change of coordinates 
	\[[x_0,x_1,x_2,x_3,x_4]\mapsto [x_0,x_1,x_2,z,x_4].\]
Then the line $l=V(x_0,x_1,z)$ and the involution $\tau$ acts by 
\[\tau:[x_0,x_1,x_2,z,x_4]\mapsto[x_0,x_1,x_2,-z,-x_4].\]

	Now we project $X$ from the invariant line $l=V(x_0,x_1,z)$ to the complementary plane $\bP^2_l:=V(x_2,x_4)\cong \bP^2_{x_0,x_1,z}$ and study the corresponding fibration in conics $\pi_l:\mathrm{Bl}_lX\rightarrow \bP^2_l$. Specifically, let us rewrite the equation of $X$ as
\begin{equation} \label{eqn2}
\begin{aligned}
	&L_1(x_0,x_1,z)x_2^2+L_2(x_0,x_1,z)x_4^2+2L_3(x_0,x_1,z)x_2x_4\\
	&+2Q_1(x_0,x_1,z)x_2+2Q_2(x_0,x_1,z)x_4+G(x_0,x_1,z)=0,
\end{aligned}
\end{equation}
where $L_i(x_0,x_1,z)$ are linear polynomials, $Q_j(x_0,x_1,z)$ are quadratic forms and $G(x_0,x_1,z)$ is homogeneous of degree $3$. We can simplify Equation \eqref{eqn2} in the following way.
\begin{itemize}
	\item Since the equation of $X$ is preserved under $\tau,$ both of the terms $L_1(x_0,x_1,z)$ and $L_2(x_0,x_1,z)$ must have no $z$ term; in other words, $L_1(x_0,x_1,z)=L_1(x_0,x_1)$ and $L_2(x_0,x_1,z)=L_2(x_0,x_1)$. For the same reason, $L_3(x_0,x_1,z)$ must be linear in $z$, i.e. $L_3(x_0,x_1,z)=Az$ for some constant $A$. Furthermore, when $l=l'$ (equivalently, $\pi:\widetilde{C}\rightarrow C$ is ramified at the point corresponding to the invariant line $l$), one deduces from the above calculation that $A=0$. 
	\item The terms $Q_1(x_0,x_1,z)x_2$ and $Q_2(x_0,x_1,z)x_4$ must be invariant under $\tau$, so $Q_1(x_0,x_1,z)$ can only contain terms with even powers of $z$ and $Q_2(x_0,x_1,z)=zN(x_0,x_1)$ for a linear form $N(x_0,x_1)$.
	\item Similarly, the cubic polynomial $G(x_0,x_1,z)$ has no monomials with odd powers of $z$.
\end{itemize} 
Thus Equation \eqref{eqn2} becomes
\begin{equation} \label{neweqn}
\begin{aligned}
	&L_1(x_0,x_1)x_2^2+L_2(x_0,x_1)x_4^2+2Azx_2x_4\\&+2Q_1(x_0,x_1,z)x_2+ 2zN(x_0,x_1)x_4+G(x_0,x_1,z)=0;
\end{aligned}	
\end{equation}
in particular, $Q_1(x_0,x_1,z)$ and $G(x_0,x_1,z)$ only contain monomials with even powers of $z$. The matrix associated with the fibration in conics $\pi_l:\mathrm{Bl}_lX\rightarrow \bP^2_l\cong \bP^2_{x_0,x_1,z}$ is 
\begin{equation} \label{eqn:M}
	M=\left(
	\begin{matrix}
	L_1(x_0,x_1) & Az & Q_1(x_0,x_1,z)\\
	Az& L_2(x_0,x_1) & zN(x_0,x_1)\\
	Q_1(x_0,x_1,z) & zN(x_0,x_1) & G(x_0,x_1,z)
	\end{matrix}\right),
\end{equation}
and the discriminant quintic curve $D_l\subset \bP^2_l\cong \bP^2_{x_0,x_1,z}$ is cut out by
	\[
	\begin{aligned}
	&\det(M)=L_1(x_0,x_1)L_2(x_0,x_1)G(x_0,x_1,z)+2Az^2N(x_0,x_1)Q_1(x_0,x_1,z)\\
	&-L_2(x_0,x_1)Q_1^2(x_0,x_1,z)-z^2L_1(x_0,x_1)N^2(x_0,x_1)-A^2z^2G(x_0,x_1,z)=0.
	\end{aligned}
	\]

	The involution $\tau$ of non-Eckardt type on $X$ induces an involution on $\bP^2_l\cong \bP^2_{x_0,x_1,z}$, given by $\tau_{\bP^2_l}:[x_0,x_1,z]\mapsto [x_0,x_1,-z]$. Because $l\subset X$ is an invariant line, there is an induced involution on $\mathrm{Bl}_lX$, which we also denote using $\tau$, making $\pi_l:\mathrm{Bl}_lX\rightarrow \bP^2_l$ equivariant. Let $\pi_l:\widetilde{D}_l\rightarrow D_l$ be the discriminant double cover. Note that the equation of the discriminant curve $D_l$ only contains terms with even powers of $z$ and hence is preserved by $\tau_{\bP^2_l}$. As a result, the restrictions of $\tau$ and $\tau_{\bP^2_l}$ to $\widetilde{D}_l$ and $D_l$ respectively induce involutions
	\[\tau:\widetilde{D}_l\rightarrow \widetilde{D}_l; \,\,\, \tau_{D_l}:D_l\rightarrow D_l\]
with respect to which $\pi_l:\widetilde{D}_l\rightarrow D_l$ is equivariant.
	
	We summarize the discussion above in the following proposition.
\begin{proposition} \label{tauDl}
	Let $(X,\tau)$ be a smooth cubic threefold with an involution $\tau$ of non-Eckardt type. Choose an invariant (but not pointwise fixed) line $l\subset X$ and project $X$ from $l$. Denote the obtained discriminant double cover by $\pi_l:\widetilde{D}_l\rightarrow D_l$. Then the non-Eckardt type involution on $X$ induces involutions $\tau$ and $\tau_{D_l}$ on $\widetilde{D}_l$ and $D_l$ respectively, making $\pi_l:\widetilde{D}_l\rightarrow D_l$ equivariant. 
\end{proposition}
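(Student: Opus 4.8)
The plan is to deduce the proposition from the functoriality, under the $\tau$-action, of the two constructions involved: the blow-up $\mathrm{Bl}_l X$ and the associated conic fibration $\pi_l$ together with its discriminant double cover $\widetilde{D}_l\to D_l$. Nothing new needs to be computed beyond the coordinate normalization already carried out in arriving at \eqref{neweqn}--\eqref{eqn:M}; the proposition is essentially the bookkeeping that this normalization expresses precisely the existence of the induced involutions.

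First I would lift $\tau$ to the blow-up. Since $l\subset\bP^4$ is $\tau$-invariant (invariance, not pointwise fixedness, is all that is used; cf.\ Lemma \ref{inv lines}), the involution $\tau$ of $\bP^4$ preserves the ideal sheaf of $l$, hence lifts canonically to an involution of $\mathrm{Bl}_l\bP^4$; as $X$ is $\tau$-invariant this restricts to an involution of the strict transform $\mathrm{Bl}_l X$, which we again denote $\tau$. On the target side, $\tau$ acts linearly on the net of planes through $l$, i.e.\ on the complementary plane $\bP^2_l=V(x_2,x_4)\cong\bP^2_{x_0,x_1,z}$, by $\tau_{\bP^2_l}:[x_0,x_1,z]\mapsto[x_0,x_1,-z]$ as recorded after \eqref{eqn:tau}. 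The linear projection $\mathrm{Bl}_l\bP^4\to\bP^2_l$ is visibly equivariant for $\tau$ and $\tau_{\bP^2_l}$, and hence so is its restriction $\pi_l:\mathrm{Bl}_l X\to\bP^2_l$; that is, $\tau_{\bP^2_l}\circ\pi_l=\pi_l\circ\tau$.

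Next I would push the equivariance down to the discriminant data. The fiber of $\pi_l$ over $p\in\bP^2_l$ is the conic cut on the plane $\langle l,p\rangle$ by $X$ (with $l$ removed), and $\tau$ carries the fiber over $p$ isomorphically onto the fiber over $\tau_{\bP^2_l}(p)$; in particular it preserves the locus where the conic degenerates, which is $D_l$, so we set $\tau_{D_l}:=\tau_{\bP^2_l}|_{D_l}$. Explicitly this is the observation, made just before \eqref{eqn:M}, that $\det(M)$ --- and therefore the equation of $D_l$ --- involves only even powers of $z$. For the cover, recall that $\widetilde{D}_l\to D_l$ is obtained by interchanging the two lines in the degenerate fiber of $\pi_l$ over each point of $D_l$; since $\tau$ sends the pair of lines in the fiber over $p$ to the pair of lines in the fiber over $\tau_{\bP^2_l}(p)$, it induces an involution of $\widetilde{D}_l$ lying over $\tau_{D_l}$ and commuting with the covering involution $\iota_l$. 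Equivalently, one may invoke the canonical correspondence between $(X,l)$ and $(D_l,\kappa_{D_l})$ recalled in \S\ref{general conic fib}: $\tau$ sends this data to $(\tau_{D_l}(D_l),(\tau_{D_l})_*\kappa_{D_l})=(D_l,\kappa_{D_l})$, and functoriality of the passage $\kappa_{D_l}\mapsto\widetilde{D}_l$ then yields the asserted involution on $\widetilde{D}_l$.

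I do not expect a genuine obstacle. The only point deserving a moment's care is the behavior over the nodes of $D_l$, where $\widetilde{D}_l$ is a pseudo-double cover rather than an \'etale one: one should check that the map induced on $\widetilde{D}_l$ is an honest morphism of stable curves and not merely a birational map. But this is automatic, since $\widetilde{D}_l$ (equivalently $\kappa_{D_l}$, equivalently the rank-$1$ reflexive sheaf $\eta_{D_l}$) is intrinsically attached to the conic fibration $\pi_l$, and any automorphism of $\pi_l$ induces an automorphism of every object built functorially from it. Thus the content of the proof is exactly the verification --- already embedded in the derivation of \eqref{neweqn}--\eqref{eqn:M} --- that the $\tau$-action descends compatibly through the whole tower $\mathrm{Bl}_l X\to\bP^2_l$, $\widetilde{D}_l\to D_l$.
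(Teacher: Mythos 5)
Your argument is correct and follows essentially the same route as the paper, whose ``proof'' is precisely the discussion preceding the proposition: the coordinate normalization showing $\det(M)$ (hence $D_l$) involves only even powers of $z$, together with the observation that $\tau$-invariance of $l$ makes the whole tower $\mathrm{Bl}_lX\to\bP^2_l$, $\widetilde{D}_l\to D_l$ equivariant. Your added care about lifting the action to $\widetilde{D}_l$ over the nodes (via the interchange-of-lines description, or functoriality of $\eta_{D_l}$) is a slightly more explicit rendering of what the paper leaves implicit, not a different method.
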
	

	\subsection{The intermediate Jacobians of cubic threefolds with a non-Eckardt type involution via the projections from invariant lines} \label{IJ as prym 2} In what follows, we suppose that $(X,\tau)$ is a general cubic threefold admitting a non-Eckardt type involution. It is not difficult to show that there exists an invariant line $l\subset X$ such that the rank of the matrix $M$ in Equation \eqref{eqn:M} never drops to $1$ (e.g.~ consider $V(\det(M_{1,1}), \det(M_{2,3}),\det(M_{3,3}))$ where $M_{i,j}$ denotes the $(i,j)$-minor of $M$), and therefore the discriminant curve $D_l$ is smooth and the discriminant double cover $\pi_l:\widetilde{D}_l\rightarrow D_l$ is connected and \'etale. Recall that $\widetilde{C}$, viewed as a curve on the Fano surface $F(X)$, parametrizes $\tau$-invariant lines $l$ that are not pointwise fixed. It follows that $\widetilde{C}$ is not a component of the curve $R\subset F(X)$ corresponding to lines of second type (in the sense of \cite[Def.~6.6]{MR302652}) or a component of the curve $R'\subset F(X)$ parametrizing lines residual to lines of second type. We will call such an invariant line $l\neq L$ a general invariant line (in particular, $\langle L, l\rangle \cap X\neq L+2l$ and thus the coefficient $A\neq 0$ in Equation \eqref{neweqn}). 
	
	Suppose that $(X,\tau,l)$ is general as above. We wish to study the intermediate Jacobian $JX$ via the Prym variety $P(\widetilde{D}_l,D_l)$. The key observation is that the covering curve $\widetilde{D}_l$ admits two commuting involutions: the involution $\tau$ in Proposition \ref{tauDl} induced from the non-Eckardt type involution on $X$, and the covering involution $\iota=\iota_l$ associated with $\pi_l:\widetilde{D}_l\rightarrow D_l$. In other words, the automorphism group $\Aut(\widetilde{D}_l)$ contains the Klein four group $\langle\tau, \iota\rangle$. This allows us to apply the techniques that go back to \cite{MR0379510}, and explored in more depth in \cite{MR1188194} and \cite{MR2218008}, to decompose $P(\widetilde{D}_l,D_l)$. In particular, we follow closely the analogous case for Eckardt cubic threefolds as discussed in \cite[\S7.1]{MR4313238}.
	
	We will take the quotient of $\widetilde{D}_l$ by an element $g$ of the Klein four group $\langle \tau, \iota\rangle\subset \Aut(\widetilde{D}_l)$; let us denote the quotient curve by 
	\[D_g:=\widetilde{D}_l/\langle g\rangle,\] 
noting that $D_\iota=D_l$.	 Also set $\overline{D}_l:=D_l/\tau_{D_l}$.
\begin{proposition} \label{kleincover}
	We have the following commutative diagram.
\begin{equation*}
\begin{tikzcd}
		&\widetilde{D}_l\arrow{d}{a_{\tau\iota}} \arrow[swap]{dl}{a_\tau} \arrow{dr}{a_\iota=\pi_l}&\\
		D_\tau\arrow[swap]{dr}{b_\tau}& D_{\tau\iota} \arrow{d}{b_{\tau\iota}}& D_\iota=D_l \arrow{dl}{b_\iota}\\
		&\overline{D}_l &
\end{tikzcd}
\end{equation*} 
Moreover,
\begin{enumerate}
	\item The map $a_\tau$ is a double covering map ramified in four points, the map $a_{\tau\iota}$ is a double cover branched in eight points, whereas the map $a_\iota$ is the discriminant double covering map $\pi_l$ which is \'etale.
	\item The map $b_\tau$  is a double covering map ramified in four points, the map $b_{\tau\iota}$ is a double cover branched in two points, and $b_\iota$ is a ramified double cover with six branch points.
	\item The curves are all smooth and their genera are as follows: $g(\widetilde{D}_l)=11$, $g(D_\tau)=5$, $g(D_{\tau\iota})=4$, $g(D_l)=6$ and $g(\overline{D}_l)=2$.
	\end{enumerate}
\end{proposition}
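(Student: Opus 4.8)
The plan is to realise the diagram as the lattice of quotients of $\widetilde D_l$ by the subgroups of the Klein four group $K := \langle \tau, \iota_l\rangle \subset \Aut(\widetilde D_l)$, and to deduce everything from fixed-point counts together with Riemann--Hurwitz. First I would record the easy input: since $(X,l)$ is general, $D_l$ is a smooth plane quintic, so $g(D_l) = 6$ and $a_\iota = \pi_l$ is the connected \'etale discriminant double cover; hence $\widetilde D_l$ is smooth with $g(\widetilde D_l) = 2\cdot 6 - 1 = 11$, and $\iota = \iota_l$ is fixed-point free. All five curves are then smooth (quotients of a smooth curve by a finite group), the diagram commutes by the universal property of the quotient maps, and it remains to count the fixed points of $\tau$, $\tau\iota$ on $\widetilde D_l$ and of $\tau_{D_l}$ on $D_l$.

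Next I would compute the fixed locus of $\tau_{D_l}$ on $D_l$, which is $D_l$ intersected with the fixed locus $\{z=0\}\sqcup\{[0:0:1]\}$ of $\tau_{\bP^2_l}$ on $\bP^2_l$. From the explicit equation $\det M = 0$ of Equation \eqref{eqn:M}, whose left-hand side involves only even powers of $z$, one checks that $[0:0:1]$ always lies on $D_l$ -- it is the point parametrising the $\tau$-invariant plane $\langle L, l\rangle$, whose section $\langle L, l\rangle\cap X = L + l + l'$ is a degenerate cubic -- while $\det M|_{z=0} = L_2(x_0,x_1)\big(L_1(x_0,x_1)G(x_0,x_1,0) - Q_1(x_0,x_1,0)^2\big)$ is a quintic form on $\{z=0\}\cong\bP^1_{x_0,x_1}$ that is reduced for general $(X,l)$. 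So $\tau_{D_l}$ has $1 + 5 = 6$ fixed points, $b_\iota$ is branched at $6$ points, and Riemann--Hurwitz gives $g(\overline D_l) = 2$. Over each of these six points there are two points of $\widetilde D_l$ interchanged by $\iota$, and an elementary argument with the $K$-action shows that $\tau$ either fixes both of them (and then $\tau\iota$ fixes neither) or swaps them (and then $\tau\iota$ fixes both); writing $k$ for the number of points of the first kind, $\tau$ has $2k$ fixed points on $\widetilde D_l$ and $\tau\iota$ has $2(6-k)$.

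The crux -- and the step I expect to be the main obstacle -- is to show $k = 2$ by inspecting the three types of degenerate fibre of $\pi_l$. Over $[0:0:1]$ the two residual lines are $L$ and $l'$, both $\tau$-invariant by Lemma \ref{inv lines}, so $\tau$ fixes the two points above it. Over the point of $\{z=0\}\cap D_l$ where $L_2 = 0$, evaluating $M$ shows that the degenerate conic in the fibre, in fibre coordinates $[x_2:x_4:t]$, has equation $L_1 x_2^2 + 2Q_1 x_2 t + G t^2 = 0$, which involves no $x_4$ and therefore has both its components preserved by the fibrewise $\tau$-action $[x_2:x_4:t]\mapsto [x_2:-x_4:t]$; again $\tau$ fixes the two points above it. Over each of the remaining four points of $\{z=0\}\cap D_l$ (where $L_1 G - Q_1^2$ vanishes and $L_2\neq 0$), the degenerate conic is $(L_1 x_2 + Q_1 t)^2 + L_1 L_2 x_4^2 = 0$, whose two components are interchanged by $x_4\mapsto -x_4$, so $\tau$ swaps the two points above it. Thus $k = 2$: $\tau$ has $4$ fixed points and $\tau\iota$ has $8$ fixed points on $\widetilde D_l$, so $a_\tau$ is branched at $4$ points and $a_{\tau\iota}$ at $8$ points, and Riemann--Hurwitz gives $g(D_\tau) = 5$ and $g(D_{\tau\iota}) = 4$.

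Finally I would treat $b_\tau$ and $b_{\tau\iota}$ via the residual involution $\bar\iota$ induced by $\iota$ on $D_\tau$ and on $D_{\tau\iota}$. A point of $D_\tau$ is $\bar\iota$-fixed exactly when its two preimages in $\widetilde D_l$ are fixed by $\tau\iota$ (they cannot be $\iota$-fixed, $\iota$ being fixed-point free), and $\tau$ permutes the eight $\tau\iota$-fixed points of $\widetilde D_l$ freely (a common fixed point of $\tau$ and $\tau\iota$ would be $\iota$-fixed), so $b_\tau$ is branched at $8/2 = 4$ points; the same argument with $\tau$ and $\tau\iota$ exchanged shows $b_{\tau\iota}$ is branched at $4/2 = 2$ points. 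The genera in part (3) are then forced, and mutually consistent -- for instance $g(\widetilde D_l) + 2 g(\overline D_l) = g(D_\tau) + g(D_{\tau\iota}) + g(D_l)$, i.e.\ $11 + 4 = 5 + 4 + 6$ -- which completes the proof. The genericity of $(X,l)$ enters only in ensuring $D_l$ is smooth, the six fixed fibres are unions of two distinct lines (equivalently $M$ has corank $1$ there, using $A\neq 0$), and the quintic $\det M|_{z=0}$ is reduced.
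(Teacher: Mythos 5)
Your proof is correct and follows essentially the same route as the paper's: identify the six fixed points of $\tau_{D_l}$ on $D_l$ (the point $[0,0,1]$ and the five points of $D_l\cap\{z=0\}$), determine over which of them $\tau$ fixes rather than swaps the two residual lines, and then conclude by Riemann--Hurwitz. You have in fact written out explicitly the fibrewise computation that the paper dismisses as "straightforward" (the rank-two conics over $[0,0,1]$, over $V(z,L_2)$, and over the four zeros of $L_1G-Q_1^2$), and you derive the branch data of $b_\tau$ and $b_{\tau\iota}$ by a direct orbit count where the paper cites \cite[Thm.~3.2]{MR2218008}, but these are presentational differences, not a different argument.
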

\begin{proof}
	We choose coordinates and project a general cubic threefold $(X,\tau)$ admitting an involution of non-Eckardt type from a general invariant line $l\subset X$ as in \S\ref{projX2}. In particular, the equations of $X$ and the discriminant quintic $D_l\subset \bP^2_{x_0,x_1,z}$ are given in Equations \eqref{neweqn} and \eqref{eqn:M} respectively. Note that $D_l$ is smooth and the discriminant double cover $\pi_l:\widetilde{D}_l\rightarrow D_l$ is connected and \'etale. Also, the involution $\tau_{D_l}:[x_0,x_1,z]\mapsto [x_0,x_1,-z]$ fixes six points on $D_l$: the point $[0,0,1]$ and the five intersection points of $D_l$ and the line $z=0$ which satisfy the equation $z=L_2(x_0,x_1)(L_1(x_0,x_1)G(x_0,x_1,z)-Q_1^2(x_0,x_1,z))=0$ and are distinct since $(X,\tau,l)$ is general. The six pairs of lines in the fibers of $\pi_l:\mathrm{Bl}_lX\rightarrow \bP^2_{x_0,x_1,z}$ that lie over the fixed points on $D_l$ correspond to the ramification points of $a_\tau:\widetilde{D}_l\rightarrow D_\tau$ and $a_{\tau\iota}:\widetilde{D}_l\rightarrow D_{\tau\iota}$, depending on whether they are fixed or switched by $\tau$. By a straightforward computation, one verifies that the four lines corresponding to the preimages of the points $[0,0,1]$ and $V(z,L_2(x_0,x_1))$ under $\pi:\widetilde{D}_l\rightarrow D_l$ are fixed by $\tau$, while the remaining eight lines are interchanged pairwise by $\tau$. For example, the lines lying over $[0,0,1]\in D_l$ are the pointwise fixed line $L$ and another invariant line $l'\neq l$.  The remainder of the assertions follow directly from Riemann-Hurwitz and \cite[Thm.~ 3.2]{MR2218008}.  
\end{proof}

	Applying \cite[Thm.~3.2]{MR2218008} (see also \cite[Prop.~ 7.12]{MR4313238}), we obtain the following description of the invariant part $P(\widetilde{D}_l,D_l)^\tau:=\im(1+\tau)$ and anti-invariant part $P(\widetilde{D}_l,D_l)^{-\tau}:=\im(1-\tau)$ for the induced involution $\tau$ on $P(\widetilde{D}_l,D_l)$.
\begin{proposition}[\cite{MR2218008} Theorem 3.2] \label{prymdecomp2}
	Notation as in Proposition \ref{kleincover}. Consider the Prym variety $P(\widetilde{D}_l,D_l)$ which is principally polarized. There is an isogeny of polarized abelian varieties
	\[\phi_l:P(D_\tau,\overline{D}_l)\times P(D_{\tau\iota},\overline{D}_l)\rightarrow P(\widetilde{D}_l,D_l),\,\,\, (y_1,y_2)\mapsto a_\tau^*(y_1)+a_{\tau\iota}^*(y_2)\] 
with $\ker(\phi_l)\cong (\bZ/2\bZ)^4,$ where $a_\tau^*$ and $a_{\tau\iota}^*$ denote the pull-back maps between the appropriate Jacobians.
	Moreover, with respect to the action of $\tau$ on $P(\widetilde{D}_l,D_l)$, the isogeny $\phi_l$ induces isomorphisms of polarized abelian varieties \[P(\widetilde{D}_l,D_l)^\tau\cong P(D_\tau,\overline{D}_l);\,\,\, P(\widetilde{D}_l,D_l)^{-\tau}\cong P(D_{\tau\iota},\overline{D}_l).\]
\end{proposition}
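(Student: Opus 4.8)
The plan is to deduce Proposition \ref{prymdecomp2} as an instance of the general decomposition theorem for Prym varieties of curves carrying an action of the Klein four group, \cite[Thm.~3.2]{MR2218008} (compare \cite[Prop.~7.12]{MR4313238}), once its hypotheses have been matched with the geometry established in Proposition \ref{kleincover}. That theorem takes as input a Galois cover $\widetilde{D}_l\to \overline{D}_l$ with group $\langle\tau,\iota\rangle\cong(\bZ/2\bZ)^2$ together with its three intermediate double covers, and produces the homomorphism $\phi_l(y_1,y_2)=a_\tau^*(y_1)+a_{\tau\iota}^*(y_2)$ into the Prym $P(\widetilde{D}_l,D_l)$ of the sub-cover $\widetilde{D}_l\to D_\iota=D_l$, asserting that it is an isogeny of polarized abelian varieties with $\ker\phi_l\cong(\bZ/2\bZ)^4$. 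Since Proposition \ref{kleincover} already records that all the curves are smooth, that $\widetilde{D}_l\to D_l$ is \'etale (so $P(\widetilde{D}_l,D_l)$ is principally polarized), and that the ramification of $a_\tau$ and $a_{\tau\iota}$ over the six $\tau_{D_l}$-fixed points of $D_l$ is as stated, the first step is just to observe that we are exactly in the situation of \cite[Thm.~3.2]{MR2218008} and to cite it for the isogeny, the kernel, and the comparison of polarizations. As a consistency check I would note the dimension count $\dim P(D_\tau,\overline{D}_l)=g(D_\tau)-g(\overline{D}_l)=3$, $\dim P(D_{\tau\iota},\overline{D}_l)=g(D_{\tau\iota})-g(\overline{D}_l)=2$, which sum to $5=g(\widetilde{D}_l)-g(D_l)=\dim P(\widetilde{D}_l,D_l)$.

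The second step is to identify the invariant and anti-invariant parts, and here the mechanism is the same as in the proof of Proposition \ref{prymdecomp1}. Since $D_\tau=\widetilde{D}_l/\langle\tau\rangle$, the involution $\tau$ acts trivially on $a_\tau^*J(D_\tau)$, so $a_\tau^*\big(P(D_\tau,\overline{D}_l)\big)\subseteq P(\widetilde{D}_l,D_l)^\tau=\im(1+\tau)$. Since $D_{\tau\iota}=\widetilde{D}_l/\langle\tau\iota\rangle$, the involution $\tau\iota$ acts trivially on $a_{\tau\iota}^*J(D_{\tau\iota})$, so $\tau$ acts there as $\iota$; as $\iota$ acts by $-1$ on $P(\widetilde{D}_l,D_l)$ by definition of the Prym variety, we get $a_{\tau\iota}^*\big(P(D_{\tau\iota},\overline{D}_l)\big)\subseteq P(\widetilde{D}_l,D_l)^{-\tau}$. (Equivalently, decomposing $J(\widetilde{D}_l)$ into the four isotypic pieces for $\langle\tau,\iota\rangle$, the Prym $P(\widetilde{D}_l,D_l)=\im(1-\iota)^0$ is isogenous to the product of the two pieces on which $\iota$ acts by $-1$, and these are precisely the new parts of $a_\tau^*J(D_\tau)$ and $a_{\tau\iota}^*J(D_{\tau\iota})$, on which $\tau$ acts by $+1$ and by $-1$ respectively.) Because $P(\widetilde{D}_l,D_l)^\tau$ and $P(\widetilde{D}_l,D_l)^{-\tau}$ are complementary (cf.~Lemma \ref{JXsplit} for the dimensions) and $\phi_l$ is surjective, a dimension count forces $\phi_l$ to restrict to isogenies $P(D_\tau,\overline{D}_l)\to P(\widetilde{D}_l,D_l)^\tau$ and $P(D_{\tau\iota},\overline{D}_l)\to P(\widetilde{D}_l,D_l)^{-\tau}$.

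Finally, to promote these isogenies to isomorphisms I would argue exactly as at the end of the proof of Proposition \ref{prymdecomp1}: using the explicit description of $\ker\phi_l$ from \cite[Thm.~3.2]{MR2218008}, one checks that $\ker\phi_l$ meets each of the two factors $P(D_\tau,\overline{D}_l)\times\{0\}$ and $\{0\}\times P(D_{\tau\iota},\overline{D}_l)$ only in the origin; the restrictions are then isomorphisms, and isomorphisms of polarized abelian varieties because $\phi_l$ is one. I expect this last point -- making the description of $\ker\phi_l$ concrete enough inside the product to see that it meets each factor trivially -- to be the main obstacle, and it is the step that relies most directly on the precise form of the kernel in \cite[Thm.~3.2]{MR2218008} (generated by classes of the type $\calO(\tilde c_i-\tilde q_i)$ coming from the ramification points identified in Proposition \ref{kleincover}); the rest is either bookkeeping with Riemann--Hurwitz and the diagram of Proposition \ref{kleincover}, or a formal consequence of the theory of Prym varieties for $(\bZ/2\bZ)^2$-covers developed in \cite{MR0379510}, \cite{MR1188194}, and \cite{MR2218008}.
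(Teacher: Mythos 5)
Your proposal is correct and matches the paper's treatment: the paper gives no separate proof of Proposition \ref{prymdecomp2}, deducing it entirely by citing \cite[Thm.~3.2]{MR2218008} (with \cite[Prop.~7.12]{MR4313238} as a parallel), exactly as you do. Your additional unpacking of the $\tau$-action on the two factors and the kernel-meets-factors-trivially step mirrors the argument the paper itself gives for the analogous Proposition \ref{prymdecomp1}, so nothing in your route diverges from theirs.
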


	Putting it together, we obtain the following theorem.
\begin{theorem}[Intermediate Jacobian via projecting from a general invariant line] \label{inv/antiinv part2}
	Let $(X,\tau)$ be a general cubic threefold with an involution $\tau$ of non-Eckardt type, and choose a general invariant line $l\subset X$. Project $X$ from $l$ and denote the discriminant double cover by $\pi_l:\widetilde{D}_l\rightarrow D_l$, and keep notation as in Proposition \ref{kleincover}. There is an isogeny of polarized abelian varieties
	\[\phi_l: P(D_\tau,\overline{D}_l)\times P(D_{\tau\iota},\overline{D}_l)\rightarrow JX\]
with $\ker\phi\cong (\bZ/2\bZ)^4$.
	Moreover, with respect to the action of $\tau$ on the principally polarized intermediate Jacobian $JX$, the isogeny $\phi_l$ induces isomorphisms of polarized abelian varieties
	\[JX^\tau\cong  P(D_\tau,\overline{D}_l); \,\,\, JX^{-\tau}\cong P(D_{\tau\iota},\overline{D}_l).\]
\end{theorem}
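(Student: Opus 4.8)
The plan is to deduce Theorem \ref{inv/antiinv part2} directly from the two preceding results, so the argument is short. First I would recall that by \S\ref{general conic fib} (the results of \cite{MR302652} Appx.~C and \cite{MR472843} Thm.~2.1), for any smooth cubic threefold $X$ and any line $l\subset X$, the conic fibration $\pi_l:\mathrm{Bl}_lX\rightarrow \bP^2_l$ produces a canonical isomorphism of principally polarized abelian varieties $JX\cong P(\widetilde{D}_l,D_l)$. When $(X,\tau,l)$ is general, as arranged in the discussion opening \S\ref{IJ as prym 2}, the discriminant quintic $D_l$ is smooth, the cover $\pi_l:\widetilde{D}_l\rightarrow D_l$ is connected and \'etale, and the coefficient $A$ in Equation \eqref{neweqn} is nonzero, so Proposition \ref{kleincover} applies and gives the Klein-four tower with the stated genera and ramification data.

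Next I would observe that the isomorphism $JX\cong P(\widetilde{D}_l,D_l)$ is equivariant for the involutions induced by $\tau$: on the left $\tau$ acts on $JX$ as the involution on the intermediate Jacobian described in \S\ref{period}, and on the right it acts as the involution $\tau:\widetilde{D}_l\rightarrow \widetilde{D}_l$ of Proposition \ref{tauDl}. Equivariance holds because both involutions are induced by the ambient involution $\tau$ of $X$ together with the $\tau$-invariance of $l$, so that $\tau$ lifts to $\mathrm{Bl}_lX$ compatibly with the conic fibration structure; tracing through the construction of the canonical isomorphism in \cite{MR472843} shows it intertwines the two actions. Consequently the invariant and anti-invariant parts correspond: $JX^\tau\cong P(\widetilde{D}_l,D_l)^\tau$ and $JX^{-\tau}\cong P(\widetilde{D}_l,D_l)^{-\tau}$, as isomorphisms of polarized abelian varieties (the polarizations being the restrictions of the principal polarization on both sides).

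Finally I would invoke Proposition \ref{prymdecomp2} (i.e.~\cite[Thm.~3.2]{MR2218008}), which provides the isogeny $\phi_l:P(D_\tau,\overline{D}_l)\times P(D_{\tau\iota},\overline{D}_l)\rightarrow P(\widetilde{D}_l,D_l)$ with kernel $(\bZ/2\bZ)^4$ and identifies $P(\widetilde{D}_l,D_l)^\tau\cong P(D_\tau,\overline{D}_l)$ and $P(\widetilde{D}_l,D_l)^{-\tau}\cong P(D_{\tau\iota},\overline{D}_l)$ as polarized abelian varieties. Composing with the equivariant identification $P(\widetilde{D}_l,D_l)\cong JX$ from the previous step yields the isogeny $\phi_l$ into $JX$ with kernel $(\bZ/2\bZ)^4$ and the two claimed isomorphisms $JX^\tau\cong P(D_\tau,\overline{D}_l)$, $JX^{-\tau}\cong P(D_{\tau\iota},\overline{D}_l)$.

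The only genuine content not already packaged in the cited results is the equivariance of the canonical isomorphism $JX\cong P(\widetilde{D}_l,D_l)$ with respect to $\tau$, so that is the step I would write out with care; everything else is bookkeeping. In practice this is the same verification already used implicitly in \S\ref{IJ as prym 1} for the projection from $L$ (compare the proof of Proposition \ref{prymdecomp1}), the difference being only that here the relevant automorphism group of the covering curve is the Klein four group $\langle\tau,\iota\rangle$ rather than $\langle\tau\rangle$, which is precisely what makes Proposition \ref{prymdecomp2} the right tool. I do not anticipate a serious obstacle; the main point to get right is simply that the lift of $\tau$ to $\mathrm{Bl}_lX$ used to define the involution on $\widetilde{D}_l$ in Proposition \ref{tauDl} is the same lift that governs the action on $JX$ via the Abel–Jacobi/conic-bundle correspondence.
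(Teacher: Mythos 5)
Your proposal is correct and follows exactly the route the paper takes: the paper's proof simply states that the argument is identical to that of Theorem \ref{inv/antiinv part1}, i.e.\ one combines the canonical equivariant isomorphism $JX\cong P(\widetilde{D}_l,D_l)$ with Proposition \ref{prymdecomp2}. Your more careful attention to the equivariance of the Clemens--Griffiths/Beauville isomorphism is exactly the point the paper leaves implicit, and your treatment of it is sound.
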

\begin{proof}
The proof is identical to that of Theorem \ref{inv/antiinv part1}, and follows from Proposition \ref{prymdecomp2}.
\end{proof}

	\subsection{The generic fiber of the Prym map for double covers of genus $2$ curves ramified in four points} \label{genericfiber} Consider the Prym map $\calP_{2,4}:\calR_{2,4}\rightarrow \calA_3^{(1,2,2)}$ where $\calR_{2,4}$ is the moduli space of double covers of smooth genus $2$ curves branched in four distinct points, and $\calA_3^{(1,2,2)}$ denotes the moduli space of abelian threefolds with a polarization of type $(1,2,2)$. From \cite[Thm.~1.2]{NO_torelli} or \cite[Thm.~5.2]{MR4435960}, we know that the generic fiber of $\calP_{2,4}$ is birational to an elliptic curve. By Theorem \ref{global torelli} and Proposition \ref{inftorelli}, a general member of $\calA_3^{(1,2,2)}$ can be realized as the invariant part $JX^\tau$ for a general cubic threefold $X$ with an involution $\tau$ of non-Eckardt type. The goal of this subsection is to give a more concrete description of the generic fiber $\calP_{2,4}^{-1}(JX^\tau)$.

	Let us keep notation as in \S\ref{projX1} and \S\ref{projX2}. Similarly as in \cite[\S5]{MR4313238}, we make the following observation. Let us project a general cubic threefold $(X,\tau)$ with a non-Eckardt type involution from a general invariant line $l\neq L$. The quotient $b_\tau:D_\tau\rightarrow \overline{D}_l$ of the discriminant double cover for the projection $\pi_l:\mathrm{Bl}_lX\rightarrow \bP^2_l$ (see Proposition \ref{kleincover}) is contained in the fiber $\calP_{2,4}^{-1}(JX^\tau)$ because of Theorem \ref{inv/antiinv part2}. We now show that for $\tau$-invariant lines $l$ and $l'$ contained in the same fiber of $\pi_L:\mathrm{Bl}_LX\rightarrow \Pi$ (in other words, there exists a plane $P\subset \bP^4$ with $X\cap P=L\cup l\cup l'$) the quotient discriminant double covers $b_\tau:D_\tau\rightarrow \overline{D}_l$ and $b'_\tau:D'_\tau\rightarrow \overline{D}_{l'}$ are isomorphic and hence correspond to the same element in $\calR_{2,4}$. Our main tools are the bigonal and tetragonal constructions; recall that the bigonal construction is also the key ingredient for proving \cite[Thm.~1.2]{NO_torelli} and \cite[Thm.~5.2]{MR4435960}. We refer the reader to \cite[\S2]{MR1188194} (see also \cite[\S1]{MR4435960}) for the description of the bigonal and tetragonal constructions. 

\begin{proposition} \label{bigonalLl}
	Notation as above. Let $l$ and $l'$ be general $\tau$-invariant lines in $X$ that are contained in the same fiber of $\pi_L:\mathrm{Bl}_LX\rightarrow \Pi$. Consider the projections of $X$ from the pointwise fixed line $L\subset X$ (as in \S\ref{projX1}) and from the invariant lines $l, l'\subset X$ (see \S\ref{projX2}). Denote the discriminant double covers by $\pi_L:\widetilde{C}\cup \widetilde{Q}\rightarrow C\cup Q$, $\pi_l:\widetilde{D}_l\rightarrow D_l$ and $\pi_{l'}:\widetilde{D}_{l'}\rightarrow D_{l'}$ respectively. Also let $\pi:\widetilde{C}\rightarrow C$ be the restriction of $\pi_L$ to the smooth cubic component, and set $b_\tau:D_\tau\rightarrow \overline{D}_l$ (respectively, $b'_\tau:D'_\tau\rightarrow \overline{D}_{l'}$) to be the quotient of $\pi_l$ (respectively, $\pi_{l'}$) by the involution induced from $\tau$ (cf.~Proposition \ref{kleincover}).
\begin{enumerate}
\item The union of lines $l\cup l'$ corresponds to a point $c_{l\cup l'}\in C\subset \Pi$ and hence determines a degree $4$ map $p=p_{l\cup l'}:C\cup Q\rightarrow \bP^1$ given by $\calO_{C\cup Q}(1)(-c_{l\cup l'})$ (i.e.~projecting $C\cup Q$ from $c_{l\cup l'}\in C$ to a complementary line in $\Pi$). Similarly, $D_l$ (respectively, $D_{l'}$) admits a map $q=q_{L\cup l'}:D_l\rightarrow \bP^1$ (respectively, $q'=q_{L\cup l}:D_{l'}\rightarrow \bP^1$) of degree $4$. Then 
	\[\widetilde{C}\cup \widetilde{Q}\stackrel{\pi_L}{\rightarrow} C\cup Q\stackrel{p}{\rightarrow}\bP^1;\,\,\,
	  \widetilde{D}_l\stackrel{\pi_l}{\rightarrow} D_l\stackrel{q}{\rightarrow}\bP^1;\,\,\,
	  \widetilde{D}_{l'}\stackrel{\pi_{l'}}{\rightarrow} D_{l'}\stackrel{q'}{\rightarrow}\bP^1\,\,\,
	\]
are tetragonally related (in other words, the tetragonal construction of one produces the other two).
\item Consider the degree $2$ map obtained as the restriction of $p=p_{l\cup l'}:C\cup Q\rightarrow \bP^1$ to the smooth cubic curve $C$ and still use $p$ to denote it. Note that $\overline{D}_l$ and $\overline{D}_{l'}$ are smooth of genus $2$ and hence admit degree $2$ maps to $\bP^1$ defined by the canonical linear systems (which are the unique $g_2^1$'s on $\overline{D}_l$ and $\overline{D}_{l'}$). Then the bigonal construction takes both 
	\[D_\tau\stackrel{b_\tau}{\rightarrow} \overline{D}_l\stackrel{r}{\rightarrow} \bP^1;\,\,\,D'_\tau\stackrel{b'_\tau}{\rightarrow} \overline{D}_{l'}\stackrel{r'}{\rightarrow} \bP^1\]
to 
	\[\widetilde{C}\stackrel{\pi}{\rightarrow} C\stackrel{p}{\rightarrow}\bP^1.\]
In particular, $b_\tau:D_\tau\rightarrow \overline{D}_l$ and $b'_\tau:D'_\tau\rightarrow \overline{D}_{l'}$ are isomorphic.
\end{enumerate}
\end{proposition}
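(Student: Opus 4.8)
The plan is to prove part (1) first, using the geometry of the cubic surfaces cut out on $X$ by the $3$-planes through the plane $P$ with $X\cap P=L+l+l'$ (this $P$ exists by the coplanarity hypothesis on $l,l'$, and it is $\tau$-invariant since $\tau$ fixes each of $L,l,l'$), and then to deduce part (2) from part (1) together with the compatibility between Donagi's tetragonal and bigonal constructions \cite{MR1188194}. For part (1): the $3$-planes $H$ with $P\subseteq H\subseteq\bP^4$ form the $\bP^1$ appearing in the statement, and for each such $H$ the cubic surface $S_H:=X\cap H$ contains $P\cap X=L+l+l'$ as a tritangent plane section. I would first identify, over $[H]\in\bP^1$, the fibres of the three degree-$8$ covers $\widetilde C\cup\widetilde Q\to\bP^1$, $\widetilde D_l\to\bP^1$ and $\widetilde D_{l'}\to\bP^1$ with the three $8$-element sets $\Lambda_L,\Lambda_l,\Lambda_{l'}$ into which the $24$ lines of $S_H$ not lying on the tritangent $L+l+l'$ are partitioned according to which of $L,l,l'$ they meet (each such line meeting exactly one of the three, by the incidence combinatorics of the $27$ lines). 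This comes straight from the conic-bundle descriptions of \S\ref{general conic fib}--\S\ref{projX1}: e.g.\ the fibre of $\widetilde D_l\to D_l\to\bP^1$ over $[H]$ consists of the components of the four degenerate conics of the conic bundle on $S_H$ given by projection from $l$, other than the ``constant'' degenerate conic $L+l'$ coming from the plane $P\subseteq H$; the four points of $D_l$ over $[H]$ are those four conics, and the double cover splits each into its two components. One then checks that the resulting incidence of the $24$ lines against the tritangent is precisely the combinatorial input of the tetragonal construction \cite[\S2]{MR1188194}; alternatively, the relation can be verified directly in the coordinates of \S\ref{projX2}, comparing $D_l=V(\det M)$ from \eqref{eqn:M}, the analogous discriminant for $l'$, and $D_L=C\cup Q$ from \eqref{rewrite eqn}, each mapped to $\bP^1_{[x_0,x_1]}$ by projection from $[0,0,1]$. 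This is the non-Eckardt analogue of \cite[\S5]{MR4313238}; the one new feature is that the $L$-tower is degenerate — reducible base $C\cup Q$, and discriminant cover branched at the six nodes $C\cap Q$ rather than \'etale (Proposition \ref{proj X 1}) — which is accommodated by the admissible-cover version of the tetragonal construction.

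For part (2), the extra inputs beyond part (1) are two structural facts. First, the centre of the projection $q\colon D_l\to\bP^1$ is the point $[P]\in D_l$ attached to the $\tau$-invariant plane $P$; in the coordinates of \S\ref{projX2} this point is $[0,0,1]$ and the projection is $[x_0,x_1,z]\mapsto[x_0,x_1]$, so since $\tau_{\bP^2_l}\colon z\mapsto -z$ the induced action of $\tau$ on this $\bP^1$ is trivial. Hence $q$ is $\tau_{D_l}$-invariant and factors as $D_l\xrightarrow{\,b_\iota\,}\overline D_l\xrightarrow{\,r\,}\bP^1$; as $\pi_l\colon\widetilde D_l\to D_l$ is $\tau$-equivariant (Proposition \ref{tauDl}), passing to $\tau$-quotients turns the degree-$4$ $l$-tower into exactly the bigonal tower $D_\tau\xrightarrow{\,b_\tau\,}\overline D_l\xrightarrow{\,r\,}\bP^1$ of Proposition \ref{kleincover}, and the same holds for $l'$. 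Second, in the $L$-tower the subtower $\widetilde C\xrightarrow{\,\pi\,}C\xrightarrow{\,p\,}\bP^1$ is the $\tau$-invariant part of $\widetilde C\cup\widetilde Q\to C\cup Q\to\bP^1$: by the proof of Proposition \ref{prymdecomp1}, $C$ parametrizes precisely the degenerate conics both of whose components are $\tau$-invariant lines, so $\tau$ acts trivially on $\widetilde C$, whereas the two components of each conic parametrized by $Q$ are interchanged by $\tau$. Feeding these into the compatibility between the tetragonal and bigonal constructions (\cite[\S2]{MR1188194}; this is the mechanism used in \cite[\S4]{NO_torelli} and \cite[\S5]{MR4435960}), part (1) then gives that the bigonal construction sends $D_\tau\xrightarrow{b_\tau}\overline D_l\xrightarrow{r}\bP^1$ to $\widetilde C\xrightarrow{\pi}C\xrightarrow{p}\bP^1$, and likewise $D'_\tau\xrightarrow{b'_\tau}\overline D_{l'}\xrightarrow{r'}\bP^1$ to $\widetilde C\xrightarrow{\pi}C\xrightarrow{p}\bP^1$. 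Finally, since the bigonal construction is an involution on isomorphism classes of such towers \cite[\S2]{MR1188194}, applying it once more to $\widetilde C\xrightarrow{\pi}C\xrightarrow{p}\bP^1$ identifies the two towers $D_\tau\xrightarrow{b_\tau}\overline D_l\xrightarrow{r}\bP^1$ and $D'_\tau\xrightarrow{b'_\tau}\overline D_{l'}\xrightarrow{r'}\bP^1$; in particular $b_\tau$ and $b'_\tau$ are isomorphic, as asserted.

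The main obstacle is part (1): verifying that the three conic-bundle towers are genuinely tetragonally related. Either route — matching the $27$-lines-against-a-tritangent combinatorics to the definition of the tetragonal construction, or the explicit coordinate computation — requires care, especially because the $L$-tower is degenerate and one must check it legitimately fits the admissible-cover tetragonal formalism; likewise, the compatibility with the involution $\tau$ used in part (2) must be verified in the ramified-cover generality, exactly as in \cite{NO_torelli} and \cite{MR4435960}. Once part (1) is in place, part (2) is essentially formal.
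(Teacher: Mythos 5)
Your part (1) is essentially the paper's argument: both run over the pencil of hyperplanes $H\supset P$, use the incidence of the $24$ lines of $X\cap H$ against the tritangent section $L\cup l\cup l'$, and match this with the fiber-product description of $q_*\widetilde{D}_l$ from \cite[\S2]{MR1188194}. The paper makes the key incidence check explicit (each line meeting $L$ and distinct from $l,l'$ meets exactly one line in each of the four coplanar pairs $m_y^{(i)}\cup n_y^{(i)}$ over $y$, hence defines a point of $q_*\widetilde{D}_l$), and it sidesteps your worry about the degenerate $L$-tower by always starting from the smooth tower $\widetilde{D}_l\to D_l\to\bP^1$ and identifying its two tetragonal partners with the $L$- and $l'$-towers.

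The divergence, and the one real gap, is in part (2), which you claim is ``essentially formal'' given part (1). Your two structural inputs are correct: $q$ is $\tau_{D_l}$-invariant, so the $\tau$-quotient of the $l$-tower is $D_\tau\to\overline{D}_l\xrightarrow{r}\bP^1$ with $r$ the $g^1_2$; and $\tau$ acts trivially on $\widetilde{C}$ and by the covering involution on $\widetilde{Q}$. But the compatibility you then invoke --- that the $\tau$-\emph{quotient} of one tetragonal tower is bigonally related to the $\tau$-\emph{fixed part} of a tetragonal partner --- is not a quotable lemma of \cite{MR1188194}, \cite{NO_torelli} or \cite{MR4435960} in this form (note it mixes a quotient on one side with a fixed locus on the other), and proving it is precisely the nontrivial content: one must identify the $\tau$-fixed locus of the relevant component of $q_*\widetilde{D}_l$ with $r_*D_\tau$, match the induced involutions, and establish irreducibility of $r_*D_\tau$. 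The paper does not deduce (2) from (1); it gives a second, parallel geometric argument, using the involution $\tau_y$ on the cubic surface $X\cap H_y$ (fixed locus a line and three points), showing the four coplanar pairs over $y$ are interchanged in pairs by $\tau_{D_l}$, and showing each $\tau_y$-invariant line meeting $L$ selects one line from each of two representative pairs, yielding an injection $\widetilde{C}\hookrightarrow r_*D_\tau$ which is an equality by irreducibility for general $(X,\tau,l)$. You would need to supply this fiberwise verification, which is comparable in length to part (1). The remaining steps --- both quotient towers bigonally mapping to the same $\widetilde{C}\to C\xrightarrow{p}\bP^1$ because $l$ and $l'$ determine the same point $c_{l\cup l'}$ and hence the same $p$, and the symmetry of the bigonal construction \cite[Lem.~2.7]{MR1188194} giving $b_\tau\cong b'_\tau$ --- agree with the paper.
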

\begin{proof}
	The first claim is the content of \cite[Ex.~2.15(4)]{MR1188194} which we now recall using the notation therein. To apply the tetragonal construction to $\widetilde{D}_l\stackrel{\pi_l}{\rightarrow} D_l\stackrel{q}{\rightarrow}\bP^1$, we consider the following commutative diagram
\begin{equation*}
\begin{tikzcd}
	q_*\widetilde{D}_l\arrow[hookrightarrow]{r}{} \arrow{d}{16:1} & \widetilde{D}_l^{(4)}\arrow{d}{\pi_l^{(4)}}\\
	\bP^1\arrow[hookrightarrow]{r}{} &D_l^{(4)}
\end{tikzcd}
\end{equation*} 
where the superscript $^{(n)}$ denotes the $n$-th symmetric product, the bottom horizontal arrow is defined by sending a point $y\in\bP^1$ to the fiber $q^{-1}(y)$, and $q_*\widetilde{D}_l$ is the fiber product which is a curve in $\widetilde{D}_l^{(4)}$. As discussed in \cite[\S2.1]{MR1188194}, the covering involution associated with $\pi_l:\widetilde{D}_l\rightarrow D_l$ induces an involution $\iota_l$ on $q_*\widetilde{D}_l$. Denote the orientation double cover of $\widetilde{D}_l\stackrel{\pi_l}{\rightarrow} D_l\stackrel{q}{\rightarrow}\bP^1$ by $\widetilde{\bP}^1\rightarrow \bP^1$ (cf.~ \cite[\S2.2]{MR1188194}). Then the map $q_*\widetilde{D}_l\stackrel{16:1}{\rightarrow} \bP^1$ factors as 
	\[q_*\widetilde{D}_l\stackrel{2:1}{\longrightarrow} q_*\widetilde{D}_l/\iota_l\stackrel{4:1}{\longrightarrow} \widetilde{\bP}^1\stackrel{2:1}{\longrightarrow} \bP^1.\] 
Moreover, the orientation double cover $\widetilde{\bP}^1\rightarrow \bP^1$ is trivial; the curves $q_*\widetilde{D}_l$, $q_*\widetilde{D}_l/\iota_l$ and $\widetilde{\bP}^1$ are thus reducible and we obtain the other two towers associated with the tower $\widetilde{D}_l\stackrel{\pi_l}{\rightarrow} D_l\stackrel{q}{\rightarrow}\bP^1$ via the tetragonal construction (see \cite[\S2.5]{MR1188194} for more details). 

	Going back to our case, we need to construct injections $\widetilde{C}\cup \widetilde{Q}\hookrightarrow q_*\widetilde{D}_l$ and $\widetilde{D}_{l'}\hookrightarrow q_*\widetilde{D}_l$. Geometrically, there exists a plane $P\subset \bP^4$ with $X\cap P=L\cup l\cup l'$ as $l$ and $l'$ are contained in the same fiber of $\pi_L:\mathrm{Bl}_LX\rightarrow \Pi$. Consider the conic fibration $\pi_l:\mathrm{Bl}_lX\rightarrow \bP^2_l$. The plane $P$ corresponds to a point $d_{L\cup l'}\in D_l\subset \bP^2_l$. Projecting the discriminant quintic $D_l$ from $d_{L\cup l'}$, one gets the degree $4$ map $q:D_l\rightarrow \bP^1$. Let us now fix a general point $y\in\bP^1 \subset \bP^2_l$. Pulling back the line $\langle d_{L\cup l'},y \rangle\subset \bP^2_l$ joining $d_{L\cup l'}$ and $y$ via $\pi_l:\mathrm{Bl}_lX\rightarrow \bP^2_l$, we obtain a hyperplane $H_y\subset \bP^4$ which intersects $X$ along a smooth cubic surface $X\cap H_y$. Note that the lines $L$, $l$ and $l'$ are contained in $X\cap H_y$. From the configuration of the $27$ lines on a smooth cubic surface, we deduce that besides $L\cup l'$ there are four other pairs of coplanar lines $m_y^{(i)}\cup n_y^{(i)}$ (for $1\leq i\leq 4$) on $X\cap H_y$ meeting $l$ which corresponds to the fiber $q^{-1}(y)\in D_l^{(4)}$. Now we choose a line on $X\cap H_y$ meeting $L$ which is different from $l$ or $l'$; such a line is parametrized by a point of $\widetilde{C}\cup \widetilde{Q}$. Observe that this line intersects four of the eight lines $m_y^{(1)}, n_y^{(1)},\dots, m_y^{(4)}, n_y^{(4)}$, one in each of the four coplanar pairs, and hence defines an element of $q_*\widetilde{D}_l$. Letting $y$ vary and using continuity, we obtain a map $\widetilde{C}\cup \widetilde{Q}\rightarrow q_*\widetilde{D}_l$ which is clearly injective. The definition of $\widetilde{D}_{l'}\hookrightarrow q_*\widetilde{D}_l$ is similar. (Note also that the local pictures of this tetragonal construction are given in \cite[2.14(3)]{MR1188194}.)
	
	Let us now move to the proof of the second assertion. Following \cite[\S2.3]{MR1188194}, we recall that the bigonal construction associates with the tower of double covers $D_\tau\stackrel{b_\tau}{\rightarrow} \overline{D}_l\stackrel{r}{\rightarrow} \bP^1$ another tower of double covers $r_*D_\tau\rightarrow r_*D_\tau/\iota_\tau\rightarrow \bP^1$, where $r_*D_\tau$ is defined via the following fiber product diagram (where the bottom horizontal arrow is defined by sending $y\in\bP^1$ to $r^{-1}(y)\in \overline{D}_l^{(2)}$) 
\begin{equation*}
\begin{tikzcd}
	r_*D_\tau\arrow[hookrightarrow]{r}{} \arrow{d}{4:1} & D_\tau^{(2)}\arrow{d}{b_\tau^{(2)}}\\
	\bP^1\arrow[hookrightarrow]{r}{} &\overline{D}_l^{(2)}
\end{tikzcd}
\end{equation*} 
and $\iota_\tau$ denotes the involution on $r_*D_\tau$ induced by the covering involution of $b_\tau:D_\tau\rightarrow \overline{D}_l$. The proof of the second claim is similar to that of the first one, but we need to take the non-Eckardt type involution into consideration. Specifically, we project $X$ from the $\tau$-invariant line $l$ to $\bP^2_l$ which is also $\tau$-invariant and admits an involution $\tau_{\bP^2_l}$. Note that the point $d_{L\cup l'}\subset \bP^2_l$ corresponding to the plane $P=\langle L,l,l'\rangle$ is fixed by $\tau_{\bP^2_l}$. Note also that the discriminant double cover $\pi_l:\widetilde{D}_l\rightarrow D_l$ is equivariant with respect to the involutions $\tau$ on $\widetilde{D}_l$ and $\tau_{D_l}$ on $D_l$ (cf.~Proposition \ref{tauDl}); the double cover $b_\tau:D_\tau\rightarrow \overline{D}_l$ is obtained as the quotient of $\pi_l$ by the involutions. Now let us project $D_l$ from $d_{L\cup l'}$ to an invariant complementary line $\bP^1 \subset \bP^2_l$ and fix a general point $y\in\bP^1$. Again let $H_y\subset \bP^4$ be the $\tau$-invariant hyperplane corresponding to the line $\langle d_{L\cup l'},y \rangle\subset \bP^2_l$. The key observation is that the smooth cubic surface $X\cap H_y$ admits an involution $\tau_y$ whose fix locus consists of a line and three distinct points (e.g.~\cite[\S9.5.1]{MR2964027}). To verify this, we choose coordinates as in \S\ref{projX2}, noting that $d_{L\cup l'}=[0,0,1]\in \bP^2_l\cong \bP^2_{x_0,x_1,z}$. We then suppose that $y=[a_0,a_1,0]$ and plug $a_0x_1=a_1x_0$ into Equation \eqref{rewrite eqn}. As in the proof of the first assertion, let $m_y^{(i)}\cup n_y^{(i)}$ (with $1\leq i\leq 4$) denote the four coplanar pairs of  lines on $X\cap H_y$ meeting $l$ which are different from $L\cup l'$. It is not difficult to see that these four pairs of lines are related by the involution $\tau_{D_l}$ on $D_l$ and hence give an element of $\overline{D}_l^{(2)}$. Without loss of generality, we assume that $\tau(m_y^{(1)})=m_y^{(3)}$ (respectively, $\tau(n_y^{(1)})=n_y^{(3)}$) and $\tau(m_y^{(2)})=m_y^{(4)}$ (respectively, $\tau(n_y^{(2)})=n_y^{(4)}$). Now choose a $\tau_y$-invariant line on $X\cap H_y$ meeting the pointwise fixed line $L$; there are four such lines besides $l$ and $l'$ all of which are parameterized by points on $\widetilde{C}$. This $\tau_y$-invariant line intersects two of the four lines $m_y^{(1)}, n_y^{(1)}, m_y^{(2)}, n_y^{(2)}$, one in each of the two coplanar pairs. Similarly as in \cite[Ex.~2.15(4)]{MR1188194}, we get an element in $r_*D_\tau$ and thus define an injection $\widetilde{C}\hookrightarrow r_*D_\tau$. Since $(X,\tau,l)$ is general, $r_*D_\tau$ is smooth and irreducible (i.e.~ the situation in \cite[p.~69 (v)]{MR1188194} does not happen) and therefore $\widetilde{C}= r_*D_\tau$. Summing it up, $D_\tau\stackrel{b_\tau}{\rightarrow} \overline{D}_l\stackrel{r}{\rightarrow} \bP^1$ and $\widetilde{C}\stackrel{\pi}{\rightarrow} C\stackrel{p}{\rightarrow}\bP^1$ are related by the bigonal construction.  

	Similarly, $D'_\tau\stackrel{b'_\tau}{\rightarrow} \overline{D}_{l'}\stackrel{r'}{\rightarrow} \bP^1$ and $\widetilde{C}\stackrel{\pi}{\rightarrow} C\stackrel{p}{\rightarrow}\bP^1$ are also related by the bigonal construction, noting that $l'$ is contained in the same fiber of $\pi_L:\mathrm{Bl}_LX\rightarrow \Pi$ as $l$ and that the bigonal structure $p=p_{l\cup l'}:C\rightarrow \bP^1$ on $C$ is determined by $l\cup l'$. Since the bigonal construction is symmetric (cf.~\cite[Lem.~2.7]{MR1188194}), $b_\tau:D_\tau\rightarrow \overline{D}_l$ and $b'_\tau:D'_\tau\rightarrow \overline{D}_{l'}$ are isomorphic.
\end{proof}

	As a consequence of \cite[\S3]{MR817884} and Proposition \ref{bigonalLl}, the Prym varieties $P(\widetilde{C},C)$ and $P(D_\tau,\overline{D}_l)$ are dual to each other; this matches our results in Theorems \ref{inv/antiinv part1} and \ref{inv/antiinv part2}. We conclude the discussion using the following proposition.
\begin{proposition} \label{genericfibC}
	Consider the Prym map $\calP_{2,4}:\calR_{2,4}\rightarrow \calA_3^{(1,2,2)}$. A general member $A\in \calA_3^{(1,2,2)}$ can be realized as the invariant part $JX^\tau$ of the intermediate Jacobian of a general cubic threefold $(X,\tau)$ with a non-Eckardt type involution. Furthermore, set $C\subset X$ to be the pointwise fixed plane section as in Lemma \ref{Csmooth}; note that $C$ is smooth and of genus $1$. Then the generic fiber of $\calP_{2,4}:\calR_{2,4}\rightarrow \calA_3^{(1,2,2)}$ over $JX^\tau\in \calA_3^{(1,2,2)}$ is birational to $C$. 
\end{proposition}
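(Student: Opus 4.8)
The plan is to construct an explicit rational map from $C$ to the fiber $\calP_{2,4}^{-1}(JX^\tau)$ and to identify it with the birational parameterization of the generic fiber of $\calP_{2,4}$ by bigonal duals that underlies \cite[\S4]{NO_torelli} and \cite[\S5]{MR4435960}. First I would recall, from those references, the precise shape of the generic fiber. Fix a general $A\in\calA_3^{(1,2,2)}$; the dual abelian variety $A^\vee$ carries the dual polarization, which is of type $(1,1,2)$ (cf.~the footnote on dual polarizations and \cite[\S2]{MR1976843}), so by the injectivity of the Prym map $\calP_{1,6}$ (Theorem \ref{injective for double covers}) there is a unique double cover $\varpi:\widetilde{E}\to E$ in $\calR_{1,6}$ with $P(\widetilde{E},E)\cong A^\vee$. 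The arguments of \cite{NO_torelli} and \cite{MR4435960} then identify the generic fiber $\calP_{2,4}^{-1}(A)$, up to birational equivalence, with $E$: a point $e\in E$ is sent to the bigonal dual of the tower $\widetilde{E}\to E\stackrel{p_e}{\rightarrow}\bP^1$, where $p_e$ is the degree $2$ map given by the pencil $|\calO_E(1)(-e)|$, i.e.~projection of $E$ (embedded as a plane cubic via the very ample $\calL$, as in Proposition \ref{thetachar}) from $e$.

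Next I would match this description with our geometry. For $A=JX^\tau$ with $(X,\tau)$ general, Theorem \ref{inv/antiinv part1} identifies $JX^\tau$ with $J(\widetilde{C})/\pi^*J(C)=P(\widetilde{C},C)^\vee$, where $\pi:\widetilde{C}\to C$ is the element of $\calR_{1,6}$ obtained by projecting $X$ from the pointwise fixed line $L$ and restricting the discriminant double cover to the smooth cubic component. Hence $P(\widetilde{C},C)\cong(JX^\tau)^\vee=A^\vee$, and by the injectivity of $\calP_{1,6}$ the cover $\varpi:\widetilde{E}\to E$ recovered above \emph{is} $\pi:\widetilde{C}\to C$; in particular the elliptic curve $E$ is the fixed plane section $C$ itself. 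It then remains to check that, for $c$ in a dense open subset of $C$, the bigonal dual of $\widetilde{C}\to C\stackrel{p_c}{\rightarrow}\bP^1$ is the same point of $\calR_{2,4}$ as the quotient cover $b_\tau:D_\tau\to\overline{D}_l$ attached to either of the two $\tau$-invariant lines $l,l'\neq L$ lying over $c$ under $\pi$. Here one uses that every general $c\in C$ does arise as a coplanar point $c_{l\cup l'}$: the $\tau$-invariant plane $\langle L,c\rangle$ meets $X$ in $L$ together with a residual conic which, for general $c$, splits as a transverse union of two distinct invariant lines $l,l'\neq L$ meeting at $c$, and by the computation in \S\ref{IJ as prym 2} these are general invariant lines. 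For such $c$: that $b_\tau:D_\tau\to\overline{D}_l$ lies in $\calP_{2,4}^{-1}(JX^\tau)$ is Theorem \ref{inv/antiinv part2}; that it is independent of the choice of $l$ versus $l'$ is the final clause of Proposition \ref{bigonalLl}; and that it coincides with the bigonal dual of $\widetilde{C}\to C\stackrel{p_c}{\rightarrow}\bP^1$ is exactly Proposition \ref{bigonalLl}(2).

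Assembling these observations, the geometrically defined rational map $C\to\calP_{2,4}^{-1}(JX^\tau)$, $c\mapsto[b_\tau:D_\tau\to\overline{D}_l]$, agrees with the bigonal-dual parameterization, which is birational onto the generic fiber by \cite{NO_torelli} and \cite{MR4435960}; hence $\calP_{2,4}^{-1}(JX^\tau)$ is birational to $C$, as claimed. I expect the main obstacle to be the bookkeeping needed to see that our construction reproduces \emph{exactly} the bigonal-dual parameterization used in \cite{NO_torelli,MR4435960}: one must confirm that the relevant bigonal structure on $C$ is the pencil $|\calO_C(1)(-c)|$ (projection from $c$) and not some other $g_2^1$, that the assignment is genuinely defined on a dense open subset of $C$ (no collapsing of the two sheets of $\pi$ beyond what Proposition \ref{bigonalLl} predicts, and no degeneration of $\overline{D}_l$, of $D_\tau$, or of the four branch points), and that the resulting map has degree $1$ rather than merely finite degree onto the fiber. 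Granting Proposition \ref{bigonalLl} and the injectivity of $\calP_{1,6}$, these reduce to the generic smoothness and irreducibility statements already assembled in \S\ref{projX2}--\S\ref{IJ as prym 2} and in the proof of Proposition \ref{bigonalLl}.
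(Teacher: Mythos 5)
Your proposal is correct and follows essentially the same route as the paper: both rest on the description in \cite{NO_torelli} and \cite{MR4435960} of the generic fiber of $\calP_{2,4}$ as an elliptic curve parameterizing bigonal duals, combined with Proposition \ref{bigonalLl} to identify that elliptic curve with the fixed cubic $C$ (the paper phrases this as $E\cong C$ where $E$ is obtained by the bigonal construction from $D_\tau\to\overline{D}_l\to\bP^1$, while you run the identification through $A^\vee$, the injectivity of $\calP_{1,6}$, and Theorem \ref{inv/antiinv part1}, which is the same matching). The only point handled more lightly in your write-up is the first assertion of the proposition, which the paper dispatches as an immediate corollary of Theorem \ref{global torelli} and Proposition \ref{inftorelli}.
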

\begin{proof}
The first assertion is a corollary of Theorem \ref{global torelli} and Proposition \ref{inftorelli}. The second claim follows from Proposition \ref{bigonalLl} and the argument in the proof of \cite[Thm.~1.2]{NO_torelli} or \cite[Thm.~5.2]{MR4435960}. Specifically, notation remains the same as in \S\ref{projX1} and \S\ref{projX2}. From Theorem \ref{inv/antiinv part2}, we know that the invariant part $JX^\tau$ is isomorphic to the Prym variety $P(D_\tau,\overline{D}_l)$ where $l\subset X$ is a general invariant line and $b_\tau:D_\tau\rightarrow \overline{D}_l$ is the quotient discriminant double cover (see Proposition \ref{kleincover}). As argued in the proof of \cite[Thm.~1.2]{NO_torelli} or \cite[Thm.~5.2]{MR4435960}, the generic fiber of the Prym map $\calP_{2,4}:\calR_{2,4}\rightarrow \calA_3^{(1,2,2)}$ over $P(D_\tau,\overline{D}_l)$ is birational to $\Pic^2(E)\cong E$ where $E$ is an elliptic curve obtained via the bigonal construction for $D_\tau\stackrel{b_\tau}{\rightarrow} \overline{D}_l\rightarrow \bP^1$. By Proposition \ref{bigonalLl}, we have that $E\cong C$ which completes the proof. 
\end{proof}

\bibliography{cubicthreefold}

\begin{thebibliography}{CMSP17}

\bibitem[ABH02]{MR1944808}
Valery Alexeev, Christina Birkenhake, and Klaus Hulek.
\newblock Degenerations of {P}rym varieties.
\newblock {\em J. Reine Angew. Math.}, 553:73--116, 2002.

\bibitem[ACT02]{MR1910264}
Daniel Allcock, James~A. Carlson, and Domingo Toledo.
\newblock The complex hyperbolic geometry of the moduli space of cubic
  surfaces.
\newblock {\em J. Algebraic Geom.}, 11(4):659--724, 2002.

\bibitem[ACT11]{MR2789835}
Daniel Allcock, James~A. Carlson, and Domingo Toledo.
\newblock The moduli space of cubic threefolds as a ball quotient.
\newblock {\em Mem. Amer. Math. Soc.}, 209(985):xii+70, 2011.

\bibitem[And58]{MR102518}
Aldo Andreotti.
\newblock On a theorem of {T}orelli.
\newblock {\em Amer. J. Math.}, 80:801--828, 1958.

\bibitem[BCF04]{MR2117416}
Edoardo Ballico, Cinzia Casagrande, and Claudio Fontanari.
\newblock Moduli of {P}rym curves.
\newblock {\em Doc. Math.}, 9:265--281, 2004.

\bibitem[Bea77a]{MR572974}
Arnaud Beauville.
\newblock Prym varieties and the {S}chottky problem.
\newblock {\em Invent. Math.}, 41(2):149--196, 1977.

\bibitem[Bea77b]{MR472843}
Arnaud Beauville.
\newblock Vari\'{e}t\'{e}s de {P}rym et jacobiennes interm\'{e}diaires.
\newblock {\em Ann. Sci. \'{E}cole Norm. Sup. (4)}, 10(3):309--391, 1977.

\bibitem[Bea82]{MR672617}
Arnaud Beauville.
\newblock Les singularit\'{e}s du diviseur {$\Theta $} de la jacobienne
  interm\'{e}diaire de l'hypersurface cubique dans {${\bf P}^{4}$}.
\newblock In {\em Algebraic threefolds ({V}arenna, 1981)}, volume 947 of {\em
  Lecture Notes in Math.}, pages 190--208. Springer, Berlin-New York, 1982.

\bibitem[Bea00]{MR1786479}
Arnaud Beauville.
\newblock Determinantal hypersurfaces.
\newblock {\em Michigan Math. J.}, 48:39--64, 2000.
\newblock Dedicated to William Fulton on the occasion of his 60th birthday.

\bibitem[BL03]{MR1976843}
Christina Birkenhake and Herbert Lange.
\newblock An isomorphism between moduli spaces of abelian varieties.
\newblock {\em Math. Nachr.}, 253:3--7, 2003.

\bibitem[BL04]{MR2062673}
Christina Birkenhake and Herbert Lange.
\newblock {\em Complex abelian varieties}, volume 302 of {\em Grundlehren der
  mathematischen Wissenschaften [Fundamental Principles of Mathematical
  Sciences]}.
\newblock Springer-Verlag, Berlin, second edition, 2004.

\bibitem[Cat84]{MR756850}
Fabrizio M.~E. Catanese.
\newblock Infinitesimal {T}orelli theorems and counterexamples to {T}orelli
  problems.
\newblock In {\em Topics in transcendental algebraic geometry ({P}rinceton,
  {N}.{J}., 1981/1982)}, volume 106 of {\em Ann. of Math. Stud.}, pages
  143--156. Princeton Univ. Press, Princeton, NJ, 1984.

\bibitem[CG72]{MR302652}
C.~Herbert Clemens and Phillip~A. Griffiths.
\newblock The intermediate {J}acobian of the cubic threefold.
\newblock {\em Ann. of Math. (2)}, 95:281--356, 1972.

\bibitem[CMF05]{MR2123232}
Sebastian Casalaina-Martin and Robert Friedman.
\newblock Cubic threefolds and abelian varieties of dimension five.
\newblock {\em J. Algebraic Geom.}, 14(2):295--326, 2005.

\bibitem[CMSP17]{MR3727160}
James Carlson, Stefan M\"{u}ller-Stach, and Chris Peters.
\newblock {\em Period mappings and period domains}, volume 168 of {\em
  Cambridge Studies in Advanced Mathematics}.
\newblock Cambridge University Press, Cambridge, 2017.
\newblock Second edition of [ MR2012297].

\bibitem[CMZ21]{MR4313238}
Sebastian Casalaina-Martin and Zheng Zhang.
\newblock The moduli space of cubic surface pairs via the intermediate
  {J}acobians of {E}ckardt cubic threefolds.
\newblock {\em J. Lond. Math. Soc. (2)}, 104(1):1--34, 2021.

\bibitem[Cor89]{MR1082361}
Maurizio Cornalba.
\newblock Moduli of curves and theta-characteristics.
\newblock In {\em Lectures on {R}iemann surfaces ({T}rieste, 1987)}, pages
  560--589. World Sci. Publ., Teaneck, NJ, 1989.

\bibitem[DK20]{MR4163532}
Olivier Debarre and Alexander Kuznetsov.
\newblock Double covers of quadratic degeneracy and {L}agrangian intersection
  loci.
\newblock {\em Math. Ann.}, 378(3-4):1435--1469, 2020.

\bibitem[Dol12]{MR2964027}
Igor~V. Dolgachev.
\newblock {\em Classical algebraic geometry}.
\newblock Cambridge University Press, Cambridge, 2012.
\newblock A modern view.

\bibitem[Don92]{MR1188194}
Ron Donagi.
\newblock The fibers of the {P}rym map.
\newblock In {\em Curves, {J}acobians, and abelian varieties ({A}mherst, {MA},
  1990)}, volume 136 of {\em Contemp. Math.}, pages 55--125. Amer. Math. Soc.,
  Providence, RI, 1992.

\bibitem[DS81]{MR594627}
Ron Donagi and Roy~Campbell Smith.
\newblock The structure of the {P}rym map.
\newblock {\em Acta Math.}, 146(1-2):25--102, 1981.

\bibitem[FNS22]{MR4435960}
Paola Frediani, Juan~Carlos Naranjo, and Irene Spelta.
\newblock The fibers of the ramified {P}rym map.
\newblock {\em Commun. Contemp. Math.}, 24(5):Paper No. 2150030, 30, 2022.

\bibitem[GAL11]{MR2820585}
V\'{\i}ctor Gonz\'{a}lez-Aguilera and Alvaro Liendo.
\newblock Automorphisms of prime order of smooth cubic {$n$}-folds.
\newblock {\em Arch. Math. (Basel)}, 97(1):25--37, 2011.

\bibitem[Ike20]{MR4156417}
Atsushi Ikeda.
\newblock Global {P}rym-{T}orelli theorem for double coverings of elliptic
  curves.
\newblock {\em Algebr. Geom.}, 7(5):544--560, 2020.

\bibitem[KR12]{MR3001805}
Stephen Kudla and Michael Rapoport.
\newblock On occult period maps.
\newblock {\em Pacific J. Math.}, 260(2):565--581, 2012.

\bibitem[LPZ18]{MR3886178}
Radu Laza, Gregory Pearlstein, and Zheng Zhang.
\newblock On the moduli space of pairs consisting of a cubic threefold and a
  hyperplane.
\newblock {\em Adv. Math.}, 340:684--722, 2018.

\bibitem[LS07]{MR2339838}
Eduard Looijenga and Rogier Swierstra.
\newblock The period map for cubic threefolds.
\newblock {\em Compos. Math.}, 143(4):1037--1049, 2007.

\bibitem[LSV17]{MR3710794}
Radu Laza, Giulia Sacc\`a, and Claire Voisin.
\newblock A hyper-{K}\"{a}hler compactification of the intermediate {J}acobian
  fibration associated with a cubic 4-fold.
\newblock {\em Acta Math.}, 218(1):55--135, 2017.

\bibitem[LZ22]{MR4363785}
Radu Laza and Zhiwei Zheng.
\newblock Automorphisms and periods of cubic fourfolds.
\newblock {\em Math. Z.}, 300(2):1455--1507, 2022.

\bibitem[Mar65]{MR182624}
Henrik~H. Martens.
\newblock An extended {T}orelli theorem.
\newblock {\em Amer. J. Math.}, 87:257--261, 1965.

\bibitem[Mar23]{marquandcubic}
Lisa Marquand.
\newblock Cubic fourfolds with an involution.
\newblock {\em Trans. Amer. Math. Soc.}, 376(2):1373--1406, 2023.

\bibitem[MM22]{marquandOG10}
Lisa Marquand and Stevell Muller.
\newblock Classification of symplectic birational involutions of manifolds of
  ${OG}10$ type, 2022.
\newblock arXiv:2206.13814.

\bibitem[MM64]{MR168559}
Hideyuki Matsumura and Paul Monsky.
\newblock On the automorphisms of hypersurfaces.
\newblock {\em J. Math. Kyoto Univ.}, 3:347--361, 1963/64.

\bibitem[Mum74]{MR0379510}
David Mumford.
\newblock Prym varieties. {I}.
\newblock In {\em Contributions to analysis (a collection of papers dedicated
  to {L}ipman {B}ers)}, pages 325--350. Academic Press, New York, 1974.

\bibitem[NO19]{MR3896124}
Juan~Carlos Naranjo and Angela Ortega.
\newblock Generic injectivity of the {P}rym map for double ramified coverings.
\newblock {\em Trans. Amer. Math. Soc.}, 371(5):3627--3646, 2019.
\newblock With an appendix by Alessandro Verra.

\bibitem[NO20]{NO_torelli}
Juan~Carlos Naranjo and Angela Ortega.
\newblock Global {P}rym-{T}orelli for double coverings ramified in at least 6
  points, 2020.
\newblock arXiv:2005.11108, to appear in J. Algebraic Geom.

\bibitem[Pan86]{MR817884}
Stefanos Pantazis.
\newblock Prym varieties and the geodesic flow on {${\rm SO}(n)$}.
\newblock {\em Math. Ann.}, 273(2):297--315, 1986.

\bibitem[RR06]{MR2218008}
Sev\'{\i}n Recillas and Rub\'{\i}~E. Rodr\'{\i}guez.
\newblock Prym varieties and fourfold covers. {II}. {T}he dihedral case.
\newblock In {\em The geometry of {R}iemann surfaces and abelian varieties},
  volume 397 of {\em Contemp. Math.}, pages 177--191. Amer. Math. Soc.,
  Providence, RI, 2006.

\bibitem[Sac21]{sac2021birational}
Giulia Sacc{\`a}.
\newblock Birational geometry of the intermediate {J}acobian fibration of a
  cubic fourfold, 2021.
\newblock arXiv: 2002.01420, to appear in Geom. Topol.

\bibitem[Usu81]{MR637511}
Sampei Usui.
\newblock Effect of automorphisms on variation of {H}odge structures.
\newblock {\em J. Math. Kyoto Univ.}, 21(4):645--672, 1981.

\bibitem[YZ20]{MR4190414}
Chenglong Yu and Zhiwei Zheng.
\newblock Moduli spaces of symmetric cubic fourfolds and locally symmetric
  varieties.
\newblock {\em Algebra Number Theory}, 14(10):2647--2683, 2020.

\bibitem[Zhe21]{MR4298656}
Zhiwei Zheng.
\newblock Orbifold aspects of certain occult period maps.
\newblock {\em Nagoya Math. J.}, 243:137--156, 2021.

\end{thebibliography}
\end{document}